\numberwithin{equation}{section}
\newcommand{\Gauss}{N}
\newcommand{\Prv}{M}
\newcommand{\St}{\mathcal{A}}
\newcommand{\target}{Z}
\newcommand{\Omss}{\mathfrak{Z}}
\newcommand{\Stp}{\mathcal{H}}
\newcommand{\Fc}{\mathcal{F}}
\newcommand{\Kc}{\mathcal{K}}
\newcommand{\Lc}{\mathcal{L}}
\newcommand{\Pc}{\mathcal{P}}
\newcommand{\C}{\mathbb{C}}
\newcommand{\E}{\mathbb{E}}
\newcommand{\N}{\mathbb{N}}
\newcommand{\Pb}{\mathbb{P}}
\newcommand{\R}{\mathbb{R}}
\newcommand{\X}{\mathbb{X}}
\newcommand{\nphi}{\psi}
\newcommand{\floor}[1]{\left\lfloor#1\right\rfloor}
\newcommand{\vertiii}[1]{{\left\vert\kern-0.25ex\left\vert\kern-0.25ex\left\vert #1
    \right\vert\kern-0.25ex\right\vert\kern-0.25ex\right\vert}}
\def\RR{\mathbb{R}}\def\R{\mathbb{R}}
\def\NN{\mathbb{N}}
\def\PP{\mathbb{P}}
\def\EE{\mathbb{E}}
\def\Var{\mathbb{V}\mathrm{ar}}
\def\al{\alpha}
\def\de{\delta}
\def\ep{\varepsilon}
\def\la{\lambda}
\def\wt{\widetilde}
\def\scr{\mathscr}
\def\dtv{d_\mathrm{TV}}
\def\dk{{d_\mathrm{K}}}
\def\dw{{d}_1}
\def\dh2l{\mathbf{d}_{\mathbb{H}_{2\ell}}}
\def\d2{\mathbf{d}_2}
\def\zetab{\bar\zeta}
\newcommand{\Indi}[1]{\mathbbm{1}({#1})}
\newtheorem{thm}{Theorem}[section]
\newtheorem{lemma}[thm]{Lemma}
\newtheorem{prop}[thm]{Proposition}
\theoremstyle{remark}
\theoremstyle{definition}
\newcounter{dummy} \numberwithin{dummy}{section}
\newtheorem{Theorem}[dummy]{Theorem}
\newtheorem{Lemma}[dummy]{Lemma}
\newtheorem{Remark}[dummy]{Remark}
\def\1{\mathbbm{1}}
\begin{document}

\title[A probabilistic approach to the Erd\"os-Kac theorem for additive functions]{A probabilistic approach to the Erd\"os-Kac theorem for additive functions}
\author{Louis H. Y. Chen, Arturo Jaramillo, Xiaochuan Yang}
\address{Louis H. Y. Chen: Department of Mathematics, National University of Singapore, Block S17, 10 Lower Kent Ridge Road, Singapore 119076.}
\email{matchyl@nus.edu.sg}
\address{Arturo Jaramillo \& Xiaochuan Yang: Mathematics Research Unit, Universit\'e du Luxembourg, Maison du Nombre 6, Avenue de la Fonte, L-4364 Esch-sur-Alzette, Luxembourg.\\
Department of Mathematics National University of Singapore Block S17, 10 Lower Kent Ridge Road Singapore 119076.}
\email{arturo.jaramillogil@uni.lu}
\email{xiaochuan.j.yang@gmail.com}

\keywords{Erd\"os-Kac theorem, Stein's method, additive functions, normal approximation, Poisson approximation}
\date{\today}

\subjclass{62E17, 60F05, 11N60, 11K65}

\begin{abstract}
We present a new perspective of assessing the rates of convergence to the Gaussian and Poisson distributions in the Erd\"os-Kac theorem for additive arithmetic functions $\nphi$ of a random integer $J_n$ uniformly distributed over $\{1,...,n\}$. Our approach is probabilistic, working directly on spaces of random variables without any use of Fourier analytic methods, and our $\nphi$ is more general than those considered in the literature. Our main results are (i) bounds on the Kolmogorov distance and Wasserstein distance between the distribution of the normalized $\nphi(J_n)$ and the standard Gaussian distribution, and (ii) bounds on the Kolmogorov distance and total variation distance between the distribution of $\nphi(J_n)$ and a Poisson
distribution under mild additional assumptions on $\nphi$. Our results generalize the existing ones in the literature. 
\end{abstract}
\maketitle
\section{Introduction}
\subsection{Overview}
The present manuscript aims to provide new probabilistic perspectives for the understanding of additive arithmetic
functions; namely, mappings $\nphi:\N\rightarrow\R$ satisfying the identity $\nphi(jk)=\nphi(j)+\nphi(k)$ when $j$ and $k$ are co-prime. 
Many of the functions of interest to number theorists are of this type, for example
the prime factor counting functions $\omega$ and $\Omega$, as well as the logarithm of any multiplicative function, such as
the sum of powers of divisors,  Euler's totient function, M\"obius function and Mangoldt's function (see \cite[Section~2.2]{Ten} for details).
The behavior of such functions is typically analyzed by counting the number of integers within a large interval, whose image under the
action of $\nphi$ lies in a given subset of $\mathbb{R}$. Probabilistically, this procedure is equivalent to
describing the action of $\nphi$ over a uniform random variable $J_n$ with uniform distribution over $\{1,\dots, n\}$.
The aim of this paper is to study the asymptotic behavior (as $n\rightarrow\infty$)
of the law of $\nphi(J_{n})$, with specific emphasis on the case where $\nphi$ is chosen to be the function that counts
the number of distinct prime factors.\\

\noindent It is worth mentioning that, although the probabilistic perspective for addressing this type of problems has been used for a long time
(see Section \ref{sec:tcopfcf} for details), only suboptimal descriptions for $\nphi(J_n)$ have been obtained probabilistically,
whereas the sharp ones have been derived by non-trivial number theoretical tools, such as Perron's formula, Dirichlet series and
estimates on the Riemann zeta function. The relevance of the present manuscript comes not only from the
main new results per se (which broadly speaking, can be described as ``sharp Gaussian and Poisson approximations for standardized and non-standardized versions
of $\nphi(J_n)$''), but also from the nature of the perspective, which
up to an estimation on the function $\pi(n)$ that counts the number of primes smaller than or equal to $n$, relies
almost entirely on probabilistic arguments and leads to conclusions as sharp as the ones obtained by
number theoretical tools, with a higher level of generality.\\

\noindent In order to set up an appropriate context for stating our main result, to be presented in detail in Section \ref{sec:main},
we first review briefly
some of the current literature related to limit theorems for $\nphi(J_{n})$. Our approach will be presented as part this literature, in
the section ``The conditioned independence approach''.
For convenience, we will assume that all the random variables throughout are defined in a common probability space
$(\Omss,\mathcal{F},\mathbb{P})$.

\subsection{The case of the prime factors counting function}\label{sec:tcopfcf}
Denote by $\mathcal{P}$ the set of prime numbers and by $[n]$ the set $\{1,\dots, n\}$.
One of the most important instances of additive functions for which the law of $\nphi(J_{n})$ can be successfully approximated, is the case where $\nphi$ is taken to be the mapping $\omega:\N\rightarrow\N$, defined by
\begin{align}\label{eq:littleomegadef}
\omega(k)
  &:=|\{p\in\Pc\ ;\ p \text{ divides } k\}|.
\end{align}
The value of $\omega(k)$ represents the number of prime divisors of a given integer $k\in\N$ without accounting for multiplicity. For instance, the value of $\omega(54)=\omega(2\times 3^2)$ is equal to two, since the only two primes that divide $54$ are two and three.\\

\noindent\textit{Classical Erd\"os-Kac theorem}\\
\noindent  The study of the distributional properties of $\omega(J_{n})$ began with the influential paper \cite{ErdKac1} by Paul Erd\"os and Mark Kac, where it was shown that the normalized random variables
\begin{align}\label{eq:omegaJnstand}
Z_n=\target_n^\omega	
  &:=\frac{\omega(J_n)-\log\log(n)}{\sqrt{\log\log(n)}}
\end{align}
converge in distribution towards a standard Gaussian random variable $\Gauss$. Since the publication of this result (nowadays known as the Erd\"os-Kac theorem), many improvements and developments on this topic have been considered. Among them is the paper \cite{Bill} by Billingsley, where the problem was addressed probabilistically by using the method of moments and the decomposition
\begin{align}\label{eq:omegadecomp1}
\omega(J_{n})
  &=\sum_{p\in\Pc\cap[n]}\Indi{p \text{ divides } J_n},
\end{align}
which simplifies the analysis of $\omega(J_{n})$ due to the fact that the summands on the right hand side can be shown to be asymptotically	independent.\\

\noindent The asymptotic Gaussianity of $Z_n$ raises the question of whether the associated convergence in distribution could be
quantitatively assessed with respect to a suitable probability metric, such as the Komogorov distance $\dk$ or the $1$-Wasserstein
distance $\dw$,  defined as
\begin{align*}
\dk(X,Y) = \sup_{z\in\RR} |\PP[X\le z] - \PP[Y\le z]|
\end{align*}
and
\begin{align*}
\dw(X,Y) = \sup_{h\in\mathrm{Lip}_{1}} |\EE[h(X)] - \EE[h(Y)]|,
\end{align*}
where $\mathrm{Lip}_1$ is the family of Lipschitz functions with Lipschitz constant at most one.
For this purpose, the idea of decomposing $\omega(J_{n})$ as a sum of random variables exhibiting a
``weak stochastic dependence''  is of great relevance from a probabilistic point of view, as it brings
the problem of studying $Z_{n}$ to the widely developed line of research of limit theorems for weakly
dependent sums of random variables; an area for which the powerful machinery of characteristic functions
and Stein's method is available. This idea has been exploited by many authors (see for instance \cite{TurRen},
\cite{LeVeq}, \cite{Kub}, \cite{Harp}, \cite{MRArratia}, \cite{BaKoNi}, \cite{JaKoNi} and \cite{KoNi}) who
have used a variety of techniques to find bounds for $\dk(Z_n,\Gauss)$. Next we present a brief summary  of
the main contributions to this topic.\\

\noindent\textit{LeVeque's conjecture}\\
The first assessment of the Kolmogorov distance between $Z_n$ and $\Gauss$ was presented in the paper \cite{LeVeq} by LeVeque, where it was shown that
$$\dk(\target_n,\Gauss)\leq C\frac{\log\log\log(n)}{\log\log(n)^{\frac{1}{4}}},$$
for some constant $C>0$ independent of $n$. In the same paper, it was also conjectured that the optimal rate was of the order $\log\log(n)^{-\frac{1}{2}}$; a claim that was subsequently shown to be true in the paper \cite{TurRen} by R\'enyi and Tur\'an. The approach presented in \cite{TurRen} relied on a careful study of the characteristic function of $\omega(J_n)$, based on Perron's formula, Dirichlet series, manipulations on contour integrals for analytic functions and some estimates on the Riemann zeta function $\zeta$ around the vertical strip $\{z\in\C\ ;\ \Re(z)=1\}$.\\

\noindent\textit{The Stein's method perspective}\\
Although the solution to LeVeque's conjecture presented by Tur\'an and R\'enyi in \cite{TurRen} is quite ingenious and beautiful, it is as well highly non-trivial from a probabilistic point of view. Moreover, up to this day, most of the approaches for obtaining bounds on $\dk(Z_n,\Gauss)$ (even those leading to suboptimal rates) are based on the analysis of the characteristic function of $\omega(J_{n})$, which requires deep and complicated number-theoretic manipulations. One of the alternative perspectives that have been proposed in the recent years, is the one presented in the paper \cite{Harp} by Harper, where techniques from Stein's method for weakly dependent random variables were used to prove that the truncated version of \eqref{eq:omegadecomp1},
\begin{align*}
V_n
  &:=\sum_{p\in\Pc\cap[n^{\frac{1}{3}\log\log(n)^{-2}}]}\Indi{p \text{ divides } J_n},
\end{align*}
satisfies $\dk(V_n,\Prv_n)\leq C\log\log(n)^{-1}$, where $\Prv_n$ is a Poisson random variable with intensity $\log\log(n)$ and $C>0$ is a universal constant independent $n$. The Poisson approximation approach presented in \cite{Harp} possesses two important features: in one hand, modulo a suitable estimation for the error of approximating $Z_n$ with $\log\log(n)^{-\frac{1}{2}}(V_n-\log\log(n))$, it provides an elementary approach for obtaining a  bound of the type
\begin{align}\label{eq:HarpboundK}
\dk(Z_n,\Gauss)\leq C\log\log\log(n)\log\log(n)^{-\frac{1}{2}},
\end{align}
where $C>0$ is an explicit constant. In addition to this, the fact that we can obtain a Poisson approximation for the law of $V_n$ is a phenomenon of great interest on its own, as the discrete nature of the Poisson distribution intuitively fits better that of $V_n$.

\noindent The idea of truncating the number of terms in \eqref{eq:omegadecomp1} was previously explored by Kubilius in \cite{Kub}, who proved among other things, a bound of the form \eqref{eq:HarpboundK} by means of an approximation of $V_n$ with a sum of \emph{fully independent} random variables. This result was subsequently sharpened by many authors (see \cite{BarVin}, \cite{Elliot}, \cite{Ten2}) and it is up to this day, a very useful tool for analyzing the law of $\psi(J_n)$ from a probabilistic perspective.\\

\noindent Both Harper's and Kubilius' approaches are very simple from a probabilistic point of view, but they have the disadvantage that the main contribution of the error in the estimation of $\dk(Z_n,\Gauss)$, comes from approximating the law of $\omega(J_{n})$ with $V_{n}$, and not from the approximation of $V_n$ with either a Poisson random variable (as in \cite{Harp}) or with a sum of independent random variables (as in \cite{Kub}). Thus, every analysis of the law of $\omega(J_n)$ that is based on a
description of $V_n$, regardless of the level of accuracy of the approximation of the law of $V_n$, can only lead to a bound of the form
$\dk(\omega(J_n),\Gauss)\leq C\log\log\log(n)\log\log(n)^{-\frac{1}{2}}$, which has a strictly slower asymptotic decay as the one from LeVeque's conjecture.\\

\noindent\textit{The mod-$\phi$ convergence perspective}\\
\noindent Recent developments on number theory have lead to a much better understanding of the characteristic function of $\omega(J_n)$ (see for instance \cite[Chapter~III.4]{Ten}), which has served as starting point for the heuristics of the papers \cite{JaKoNi}, \cite{KoNi} and \cite{BaKoNi}, where the powerful tool of mod-Gaussian and mod-Poisson convergence was developed and successfully applied to the analysis of the asymptotic law of $\omega(J_{n})$.
This type of technique, which we will refer to in the sequel as mod-$\phi$ convergence
(to avoid the specification on the Gaussian and Poissonian nature), aims to describe distributional properties of a given collection of
random variables $\{X_{n}\}_{n\in\N}$ by analyzing the quotient
\begin{align}\label{eq:modphiquo}
\frac{\E[e^{\textbf{i}\lambda X_n}]}{\E[e^{\textbf{i}\lambda U_n}]},
\end{align}
where $U_n$ is a random variable whose distribution is either a Gaussian or a Poisson. To exemplify the nature of \eqref{eq:modphiquo}, consider the case where $U_{n}$ is a standard Gaussian random variable and $X_{n}$ is an infinitely divisible random variable with unit Gaussian component and characteristic function
\begin{align*}
\E[e^{\textbf{i}\lambda X_n}]
  &=e^{\textbf{i}\mu_{n}-\frac{1}{2}\lambda^2+\int_{\R}(e^{\textbf{i}\lambda x}-1-\Indi{\{|x|<1\}}\textbf{i}\lambda x)\Pi_{n}(dx)},
\end{align*}
where $\mu_{n}\in\R$ and $\Pi_{n}$ is a Levy measure. For this instance, the quotient in \eqref{eq:modphiquo} takes the form
\begin{align*}
\frac{\E[e^{\textbf{i}\lambda X_n}]}{\E[e^{\textbf{i}\lambda U_n}]}
  =e^{\textbf{i}\mu_{n}+\int_{\R}(e^{\textbf{i}\lambda x}-1-\Indi{\{|x|<1\}}\textbf{i}\lambda x)\Pi_{n}(dx)},
\end{align*}
which is the characteristic function of the non-Gaussian part of $X_{n}$. In this sense, we can think of \eqref{eq:modphiquo} as a
quantity that describes the part of the characteristic function of $X_n$ that is not standard Gaussian (respectively, Poissonian). One should remark however, that for a more general random variable $X_{n}$, the quotient $\frac{\E[e^{\textbf{i}\lambda X_n}]}{\E[e^{\textbf{i}\lambda U_n}]}$ might not be the characteristic function of a random variable, which is an important observation to take into account when applying the heuristic above. Naturally, if the law of $U_n$ remains constant as $n$ varies, then the convergence of \eqref{eq:modphiquo} towards one implies the convergence in distribution of $X_n$ to $U_1$. However, a more complex phenomenology might appear in the case where $U_n$
varies and the aforementioned quotient converges to a non-constant limit. This idea was explored by Barbour, Kowalski and Nikeghbali,
where the distance between the laws of $X_{n}$ and a suitable Poisson random variable was described in terms of the regularity
properties of the limit of \eqref{eq:modphiquo} as a function of $\lambda$. These results were then applied to the case where
$X_{n}:=\omega(J_{n})$, for which a lot of information on the characteristic function of $\omega(J_{n})$ was available, leading
among other things, to the following remarkable result see \cite[Theorem~7.2]{BaKoNi}
\begin{Theorem}
There exists a constant $C>0$, such that
\begin{align}\label{dkModphi}
\dtv(\omega(J_n),\Prv_n)
  &\leq C\log\log(n)^{-\frac{1}{2}},
\end{align}
where $\Prv_n$ is a Poisson random variable with intensity
parameter $\log\log(n)$ and $\dtv(X,Y)$ denotes the total variation distance between two random variables $X$ and $Y$; namely,
\begin{align*}
\dtv(X,Y):= \sup_{A\in\mathcal{B}(\R)} | \PP[X\in A ] - \PP[Y\in A]|,
\end{align*}
where $\mathcal{B}(\R)$ the collection of Borel subsets of $\R$.

\end{Theorem}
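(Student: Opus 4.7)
The plan is to follow the mod-Poisson convergence paradigm sketched in the introduction: first obtain, via classical analytic number theory, an asymptotic for the generating function of $\omega(J_n)$ of the correct mod-Poisson form, and then extract the total variation bound via Fourier inversion on $\Z$.

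\textbf{Step 1 (Analytic input via Selberg--Delange).} For $z$ in a compact neighborhood of the unit circle, factor the Dirichlet series
\[
\sum_{k=1}^{\infty}\frac{z^{\omega(k)}}{k^{s}} = \zeta(s)^{z}\,F(s,z), \qquad F(s,z) := \prod_{p\in\Pc}\left(1+\frac{z-1}{p^{s}}\right)\left(1-\frac{1}{p^{s}}\right)^{z},
\]
where $F(s,z)$ is analytic for $\Re(s)>1/2$. Applying Perron's formula and shifting the contour into the zero-free region of $\zeta$ yields the uniform asymptotic
\[
\frac{1}{n}\sum_{k\leq n} z^{\omega(k)} = \frac{(\log n)^{z-1}}{\Gamma(z)}F(1,z) + O\left(\tfrac{1}{\log n}\right).
\]
Substituting $z=e^{it}$ and rewriting $(\log n)^{e^{it}-1}=\exp((\log\log n)(e^{it}-1))$ gives
\[
\E[e^{it\omega(J_n)}] = \E[e^{it\Prv_n}]\,\Psi(e^{it}) + O\left(\tfrac{1}{\log n}\right), \qquad \Psi(z) := \frac{F(1,z)}{\Gamma(z)}.
\]
Since $\Psi(1)=1$, this is precisely mod-Poisson convergence with parameter $\mu_n:=\log\log n$ and limiting function $\Psi$, which is analytic in a neighborhood of $z=1$.

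\textbf{Step 2 (Fourier inversion).} Because both $\omega(J_n)$ and $\Prv_n$ are $\N$-valued, Fourier inversion on the torus gives, for every $k\geq 0$,
\[
\PP[\omega(J_n)=k]-\PP[\Prv_n=k] = \frac{1}{2\pi}\int_{-\pi}^{\pi}e^{-ikt}\,\E[e^{it\Prv_n}]\,(\Psi(e^{it})-1)\,dt + O\left(\tfrac{1}{\log n}\right).
\]
The Poisson modulus satisfies $|\E[e^{it\Prv_n}]|=\exp((\log\log n)(\cos t-1))$, so it behaves as a Gaussian of effective width $(\log\log n)^{-1/2}$ around $t=0$. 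Since $\Psi(1)=1$ and $\Psi$ is smooth, $\Psi(e^{it})-1 = O(|t|)$ near the origin, hence the integrand is concentrated in that Gaussian window with amplitude $O(|t|)$.

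\textbf{Step 3 (Summing over $k$).} It remains to estimate
\[
\dtv(\omega(J_n),\Prv_n)=\tfrac{1}{2}\sum_{k\geq 0}\bigl|\PP[\omega(J_n)=k]-\PP[\Prv_n=k]\bigr|.
\]
Integrating by parts once or twice in $t$ transfers smoothness of $t\mapsto \E[e^{it\Prv_n}](\Psi(e^{it})-1)$ into polynomial decay in $k$ of the Fourier coefficient, which makes $\sum_{k}(\cdots)$ convergent with only a bounded absolute factor; the residual $t$-integral on the Poisson window has size proportional to the window width $\mu_n^{-1/2}=(\log\log n)^{-1/2}$, yielding the claimed bound.

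\textbf{Main obstacle.} The delicate point is Step 3. A direct use of Plancherel followed by Cauchy--Schwarz on $\sum_{k}|a_k|\leq(\sum_k(1+k^2)|a_k|^2)^{1/2}(\sum_k(1+k^2)^{-1})^{1/2}$ produces only the suboptimal rate $(\log\log n)^{-1/4}$, because it squares both the Poisson window width and the factor $|t|$ coming from $\Psi(e^{it})-1$. To reach the sharp exponent $1/2$ one must perform integration by parts in $t$ to extract genuine polynomial decay in $k$, while simultaneously retaining both the linear smallness $|\Psi(e^{it})-1|=O(|t|)$ and the Gaussian concentration of the Poisson characteristic function. Uniform control of the derivatives of $\Psi(e^{it})$ (provided by the analyticity of $\Psi$ near the unit circle) is what allows this integration by parts to go through cleanly.
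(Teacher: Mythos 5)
Your proof follows the mod-Poisson / Selberg--Delange route of Barbour, Kowalski and Nikeghbali (the paper this theorem is cited from), and it is genuinely different from how the present manuscript obtains the bound. Here the result is derived, up to the value of the constant and with $\lambda_n=\sum_{p\le n}(p-1)^{-1}$ in place of $\log\log n$, from Theorem \ref{t:poisson}(ii): one first couples $J_n$ with $H_nQ(n/H_n)$ at a total-variation cost $O(\log\log n/\log n)$ (Lemma \ref{lem:arra}), then exploits the exact conditioned-independence identity $\Lc(\alpha_p(H_n);\,p\in\Pc_n)=\Lc(\xi_p;\,p\in\Pc_n\mid\scr{A}_n)$ of Theorem \ref{thm:multiplicities} together with a Poisson-process embedding and Mecke's formula to run Stein's method for Poisson approximation on $\psi(H_n)$ (Theorem \ref{thm:PLTHarmonic}). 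Your route requires the Euler product identity specific to $\omega$ and complex-analytic machinery (Perron's formula, zero-free regions, uniform control on the whole unit circle), and in exchange gives access to Poisson--Charlier refinements; the paper's route is elementary past prime-counting estimates, produces explicit constants, and extends verbatim to general additive $\psi$ under \textbf{(H1)}--\textbf{(H2)}, which the Dirichlet-series identity does not.

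Two points to tighten in your sketch. First, the Euler factor should read $F(s,z)=\prod_p(1+(z-1)p^{-s})(1-p^{-s})^{z-1}$; with exponent $z$ the product vanishes at $s=1$ and the stated main term collapses. Second, in Step~3 the integration by parts must be performed against $e^{-i(k-\log\log n)t}$ after pulling the carrier wave $e^{it\log\log n}$ out of the Poisson characteristic function, so that the resulting polynomial decay is in $|k-\log\log n|$ rather than in $k$; only then does summing a pointwise $O((\log\log n)^{-1})$ error over the $O(\sqrt{\log\log n})$ effectively contributing integers yield the claimed $O((\log\log n)^{-1/2})$. Likewise, the Selberg--Delange error $O(1/\log n)$ must be controlled in a $t$-derivative norm, not merely in sup norm, before its Fourier coefficients can be absolutely summed over $k$.
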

\noindent Notice that as a corollary of \eqref{dkModphi}, one obtains an alternative proof of LeVeque's bound.
It is worth mentioning that in \cite{BaKoNi}, sharper approximations of the law of $\omega(J_n)$ were obtained by means of
Poisson-Charlier signed measures. The downside of applying the mod-$\phi$ convergence approach for proving LeVeque's conjecture,
is that prior knowledge on the characteristic function of $\omega(J_n)$ is required, which as in the paper of R\'enyi  and Tur\'an \cite{TurRen}, can only be obtained by means of analytic techniques from number theory.\\

\noindent\textit{The size-biased permutation approach}\\
In the paper \cite{MRArratia} by Arratia, an alternative probabilistic approach for studying the divisibility properties of $J_n$ was proposed.
The methodology consists in constructing a coupling of $J_n$ together with a partial product $T_n$ of size-biased permutated random primes,
in such a way that the total variation distance between $J_n$ and $T_nP_n$ is small, where $P_n$ is a suitable random prime
(see \cite[Section~1.2]{MRArratia}). Manipulations on the law of the size-biased permutation become tractable
after introducing a suitable point process with amenable independence properties (see \cite[Section~3.5]{MRArratia} for details).\\

\noindent Using the aforementioned ideas, it was shown in \cite[Theorem~3]{MRArratia} that if $d_{\mathsf{id}}:\N^2\rightarrow\N$ denotes the insertion deletion distance
\begin{align*}
d_{\mathsf{id}}(\prod_{p\in\Pc}p^{\alpha_p},\prod_{p\in\Pc}p^{\beta_p})
  &:=\sum_{p\in\Pc}|\alpha_p-\beta_p|,
\end{align*}
for $\prod_{p\in\Pc}p^{\alpha_p},\prod_{p\in\Pc}p^{\beta_p}\in\N$, and $d_{1,\mathsf{id}}$  the associated 1-Wasserstein distance
measure with respect to $d_{\mathsf{id}}$ , whose action over the laws of random variables $X,Y$ is given by
\begin{align*}
d_{1,\mathsf{id}}(X,Y) = \sup\{|\EE[h(X)] - \EE[h(Y)]|\ ;\ |h(x)-h(y)|\leq d_\mathsf{id}(x,y)\ \text{ for all } x,y\in\N\},
\end{align*}
then
\begin{align}\label{eq:limWassdelinser}
\lim_{n\rightarrow\infty}d_{1,\mathsf{id}}(J_n,\prod_{p\in\Pc\cap[n]}p^{\xi_p})
  &= 2,
\end{align}
where $\xi_p$ are independent Geometric random variables with $\Pb[\xi_p=k]=p^{-k}(1-p^{-1})$, for $k\geq0$.
This identity can be combined with classical results on sums of independent random variables in order to obtain a bound similar to that of LeVeque's conjecture,
but measured with respect to the 1-Wasserstein distance $d_{1}$. More precisely, it can be shown that \eqref{eq:limWassdelinser}
implies the existence of a constant $C>0$, such that
\begin{align}\label{eq:limWassdelinseraux}
d_{1}\big(Z_n,\Gauss\big)
  &\leq \frac{C}{\sqrt{\log\log(n)}}.
\end{align}
It is worth mentioning that although it is not clear how to use \eqref{eq:limWassdelinser} to get bounds of the type \eqref{eq:limWassdelinseraux} with respect to
the Kolmogorov distance $\dk$, the two-step approximation scheme of Arratia (approximating $J_n$ with $T_n$ and then $\omega(T_n)$ with
a Gaussian distribution) does inspire
us to find a  transparent divisibility structure in the intermediate step, a counterpart of his well elaborated $T_n$.
By doing so, we manage to find sharp bounds for both the Kolmogorov distance $\dk\big(Z_n,\Gauss\big)$ and the Wasserstein
distance $d_{1}\big(Z_n,\Gauss\big)$ (see Section \ref{sec:main} for details).\\

\noindent\textit{The function $\Omega$}\\
Another instance for which the law of $\nphi(J_n)$ can be suitably approximated, is the case where $\nphi$ is the prime factor counting function with multiplicity, defined by
\begin{align}\label{eq:bigomegadef}
\Omega(k)
  &:=\sum_{p\in\Pc\cap[k]}\max\{\alpha\geq0\ ;\ p^{\alpha}\ \text{ divides } k\}.
\end{align}
Unlike $\omega$, this function is not totally additive (meaning that $\Omega(p^{\alpha})$ doesn't necessarily coincide with $\Omega(p)$,
for $p\in\Pc$ and $\alpha\in\N$). However, most of the results related to the state of the art on the asymptotical distribution of $\omega(J_n)$ are also valid for $\Omega(J_n)$. In particular, the results presented by R\'enyi and Tur\'an in \cite{TurRen} and those presented by Barbour, Kowalski and Nikeghbali in \cite{BaKoNi} provide a bound of the type
\begin{align*}
\dk\bigg(\frac{\Omega(J_n)-\log\log(n)}{\sqrt{\log\log(n)}},\Gauss\bigg)
  &\leq C\log\log(n)^{-\frac{1}{2}},
\end{align*}
for some $C>0$. Moreover, the Poisson approximation (and Poisson-Charlier approximation) presented in \cite[Theorem~6.2]{BaKoNi}, establishes the bound
\begin{align}\label{dkModphibOm}
\dtv(\Omega(J_n),\Prv_n)
  &\leq C\log\log(n)^{-\frac{1}{2}},
\end{align}
where $C>0$ is a constant independent of $n$ and $\Prv_n$ is a Poisson random variable with parameter $\log\log(n)$. The paper \cite{Harp} doesn't explicitly state a Poisson approximation for $\Omega(J_n)$, although it is clear that the ideas from \cite{Harp} can be easily adapted to obtain a bound of the type $\dk(\tilde{V}_n,\Prv_n)\leq C\log\log(n)^{-1}$, where
\begin{align*}
\tilde{V}_n
  &:=\sum_{p\in\Pc\cap[1,n^{\frac{1}{3}\log\log(n)^{-2}}]}\max\{\alpha\ge 0 ;\ p^{\alpha} \text{ divides } J_n\}.
\end{align*}
The interested reader is also encouraged to see \cite[Section~5]{Harp} for an analysis of both $\omega(J_n)$ and $\Omega(J_n)$ via
exchangeable pairs. However, one should keep in mind that this approach leads to results strictly coarser than those from
\cite[Section~4]{Harp}.

\subsection{Erd\"os-Kac theorem for general additive functions}
The broad range of approaches and ideas nowadays available for addressing the classical Erd\"os Kac-theorem and LeVeque's conjecture, naturally brings the question of whether such techniques can be adapted to describe the asymptotic distribution of $\nphi(J_n)$ for a more general additive function $\nphi$. Although the convergence in distribution (without assessment on its Kolmogorov distance) has been known since the paper \cite{ErdKac1}, the adaptation of the proof of the optimal bounds obtained in the papers \cite{TurRen}, \cite{BaKoNi}, \cite{JaKoNi} and \cite{KoNi} to the general additive function case, is a surprisingly difficult task. This is mainly due to the fact that all the estimations of $\dk(Z_n,\Gauss)$ based on the use of characteristic function 
rely on the identity
\begin{align*}
\E[e^{\textbf{i}\lambda\omega(J_n)}]
 &=e^{\log\log(n)(e^{\textbf{i}\lambda}-1)}\exp\big\{\Gamma(e^{\textbf{i}\lambda})^{-1}\prod_{q\in\Pc}(1+q^{-1}(e^{\textbf{i}\lambda}-1))(1-q^{-1})^{e^{\textbf{i}\lambda}-1}+O(\log(n)^{-1})\big\},
\end{align*}
which doesn't necessarily hold when $\omega$ is replaced by $\psi$.
 Other perspectives, such as the one by Harper in \cite{Harp} and Kubilius in \cite{Kub} are quite versatile and extend easily to additive functions, but as mentioned before, they do not provide an optimal rate of convergence in the case for the prime factor counting functions $\omega$ and $\Omega$. This motivates the development of an alternative probabilistic tool that allows to optimally estimate
\begin{align}\label{eq:dKnormJnN}
\dk\big{(}\sigma_n^{-1}(\psi(J_n)-\mu_n),\Gauss\big{)},
\end{align}
where $\mu_n\in\R$ and $\sigma_n>0$ are such that $\sigma_n^{-1}(\psi(J_n)-\mu_n)$ converges in law to $\Gauss$.
The main goal for this paper consists in addressing the aforementioned problem from a probabilistic point of view,
relying as little as possible on sophisticated number theoretical arguments. In the sequel, we will refer to this methodology by
``the conditioned independence approach''.\\

\noindent\textit{The conditioned independence approach}\\
In the paper \cite{Harp} by Harper, it is mentioned that the decomposition \eqref{eq:omegadecomp1}, expressing $\omega(J_{n})$ as a sum
of weakly dependent random variables, suggests the use of the theory from Stein's method for estimating \eqref{eq:dKnormJnN}.
This idea partially influences
our approach, as we follow as well a Stein's method perspective. However, instead of viewing $\psi(J_n)$ as a sum of weakly dependent
random variables, we will use a two-step approximation strategy similar in spirit to those from \cite[Section~1.2]{MRArratia},
to show that, firstly, $\psi(J_n)$ is close in Kolmogorov distance to $\psi(H_n)$,
where $H_n$ is a random variable supported in $\{1,\dots, n\}$ and taking the value $k$ with probability proportional to $k^{-1}$.
We then carry out a Stein's method analysis over the variables $\psi(H_n)$, which surprisingly, is a considerably simpler task due to
a key identity in law (see Theorem \ref{thm:multiplicities}) which expresses the law of $\psi(H_n)$ as a sum of fully independent
random variables, conditioned on a suitable explicit event.\\

\noindent As one can expect, the conditioned independence provides a much
easier framework to apply probabilistic techniques, in comparison with the case of general weakly dependent random variables. We
utilize this neat structure to embed the underlying randomness of $\nphi(H_n)$ into a Poisson space,
which remarkably facilitates the application of Stein's method, in virtue of the celebrated
``Mecke's formula''.
This embedding procedure is entirely different from previous
approaches and consists on comparing the behavior of $\nphi(H_n)$ with that of a random variable of the form $\tilde{\nphi}(H_n)$, where
$\tilde{\nphi}$ is a additive arithmetic function characterized by the identity
$\tilde{\nphi}(p^{\alpha})=\alpha\nphi(p)$, valid for all $p\in\Pc$ and $\alpha\in\N_0$.
We would like to refer the reader to \cite{MRArratia} for a Poissonian embedding of the randomness of
the prime decomposition of $J_n$, based not on independent random variables conditioned on a certain constraint, but rather on a beautiful
parallelism with uniform random permutations.
It is interesting to remark that the approximating function $\tilde{\nphi}$ that we utilize doesn't satisfy the property
$\tilde{\nphi}(p^{\alpha})=\tilde{\nphi}(p)$ (namely, it is not ``totally additive'' 
the classic heuristics that additive functions are easier to study when we approximate them with totally additive functions
(this is the case of the analysis of $\Omega(J_n)$, which is obtained from properties of $\omega(J_n)$).\\

%

\noindent Before elaborating further on the details of this methodology, we will introduce some notation and establish basic
assumptions on $\psi$. We will write $\N_0$ to denote the set of natural numbers including zero, namely $\N_0:=\N\cup\{0\}$.
The probability law of a random variable $X$, will be denoted by $\mathcal{L}(X)$.
For a given $p\in\Pc$, consider the $p$-adic valuation function $\alpha_p:\N\rightarrow\N$, defined as the unique mapping satisfying the prime factorization
\begin{align}\label{eq:alphadef}
k=\prod_{p\in\Pc}p^{\alpha_p(k)},
\end{align}
for all $k\in\N$. It is plain that if $k\leq n$, one only needs to consider $p\in\mathcal P_n$ in the factorization where
$$\mathcal P_n:=\mathcal P\cap [n].$$
Let $\{\xi_p\}_{p\in\Pc}$ be a collection of independent gemetric random variables with
\begin{align*}
\Pb[\xi_p=k]
  &=(1-p^{-1})p^{-k},
\end{align*}
for all $k\in\N_{0}$ and $p\in\mathcal P$. Our main result requires the following assumptions.

\begin{enumerate}
\item[\textbf{(H1)}] The function $\nphi$ restricted to $\Pc$ is bounded. Namely,
\begin{align*}
c_1
  &:=\sup_{p\in\Pc}|\nphi(p)|<\infty.
\end{align*}

\item[\textbf{(H2)}] Suppose
\begin{align*}
c_2
  &:=\left(
\sum_{p\in\Pc}\frac{\EE[\psi(p^{\xi_p+2})^2]}{p^2}\right)^{1/2}
  <\infty,
\end{align*}
which can be shown to be equivalent to
\begin{align*}
\sum_{p\in\Pc} \sum_{k\ge 2} \frac{\psi(p^k)^2}{p^k} <\infty.
\end{align*}
\end{enumerate}

\noindent Since the type of result that we are seeking is asymptotic as $n$ approaches infinity, we will assume in the sequel
that $n\geq 21$.  We use the notation $\zetab(s)$ to denote Riemann's zeta function minus 1, namely $\sum_{k\ge 2} k^{-s}$ which
serves as upper bound for the series $\sum_{p\in\Pc} p^{-s}$ with some specific choices of $s$.

\section{Main Results}\label{sec:main}
\noindent
In this section we present our main results. Let $\mu_{n}$ and $\sigma_{n}>0$ be given by
\begin{align}\label{eq:musigdef}
\mu_{n}
  =\sum_{p\in\Pc_n} \nphi(p)p^{-1}(1-p^{-1})^{-1}
	\ \ \ \ \ \ \text{ and }
	\ \ \ \ \ \
\sigma_{n}^2
  =\sum_{p\in\Pc_n}\nphi(p)^2p^{-1}(1-p^{-1})^{-2}.
\end{align}
We will assume without loss of generality that $\nphi$ is not-identically zero and  $n\geq 27$
is sufficiently large so that $\sigma_n$ is strictly positive.
Our main results are the following bounds.
\begin{Theorem}\label{thm:mainunif}
Suppose that $\nphi$ satisfies \textbf{(H1)} and \textbf{(H2)}.  Then, provided that $\sigma_{n}^2\geq 3(c_1^2+c_2^2)$,
\begin{align}\label{eq:Kolmogorov_Jn}
\dk\left(\frac{\nphi(J_{n})-\mu_n}{\sigma_n},\Gauss\right)
  &\leq \frac{\kappa_1}{\sigma_n}+\frac{\kappa_2}{\sigma_n^2}+\frac{\kappa_3\log\log(n)}{\log(n)},
\end{align}
and
\begin{align}\label{eq:Wasserstein_Jn}
d_1\left(\frac{\nphi(J_{n})-\mu_n}{\sigma_n},\Gauss\right)
  &\leq \frac{\kappa_4}{\sigma_n}+\kappa_5\frac{\log\log (n)^{\frac{3}{2}}}{\log (n)^{\frac{1}{2}}},
\end{align}
where $\Gauss$ is a standard Gaussian random variable and
\begin{align}\label{eq:kappadef}
\begin{array}{lll}
\kappa_1:=65c_1+66c_2  &  \kappa_2:=726c_1^2+116c_1c_2&\kappa_3:=67.4\\
\kappa_4:=106c_1 +  2c_2  &  \kappa_5:= 49.3.
\end{array}
\end{align}
\end{Theorem}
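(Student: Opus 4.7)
The plan is to follow the three-stage ``conditioned independence'' strategy advertised in the introduction: (i) replace $J_n$ by $H_n$, (ii) replace $\nphi(H_n)$ by the totally additive relaxation $\tilde\nphi(H_n)$, (iii) carry out Stein's method for $\tilde\nphi(H_n)$ using the conditional-independence representation, and finally combine the three error budgets to produce the two displayed bounds.

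For stage (i), I compare the mass functions of $J_n$ (uniform on $[n]$) and $H_n$ (with $\PP[H_n=k]$ proportional to $1/k$) directly. Writing $L_n:=\sum_{k=1}^n 1/k$ and using the elementary estimate $L_n=\log n+O(1)$, the ratio $\PP[H_n=k]/\PP[J_n=k]$ can be controlled pointwise, and a coupling of $(J_n,H_n)$ gives $\dk(\nphi(J_n),\nphi(H_n))$ and $d_1(\nphi(J_n),\nphi(H_n))$ both of size $O(\log\log n/\log n)$ after renormalising by $\sigma_n$; this step invokes the Chebyshev-type bound $\pi(n)\leq C n/\log n$ and is the source of the third summand $\kappa_3\log\log(n)/\log(n)$ (and the $\log\log(n)^{3/2}/\log(n)^{1/2}$ term for Wasserstein, where an extra moment is needed to handle the unboundedness of test functions).

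For stage (ii), the discrepancy $\nphi(H_n)-\tilde\nphi(H_n)=\sum_{p\in\Pc_n}\bigl(\nphi(p^{\alpha_p(H_n)})-\alpha_p(H_n)\nphi(p)\bigr)$ is supported on the event $\{\alpha_p(H_n)\ge 2\}$ for at least one prime. Using the representation from Theorem \ref{thm:multiplicities} to dominate the marginals of $\alpha_p(H_n)$ by the geometric variables $\xi_p$, the $L^2$ norm of this discrepancy is bounded via hypothesis \textbf{(H2)}, producing the $c_2$ dependence and, through the standard translation of $L^2$ errors into Kolmogorov/Wasserstein errors, the $1/\sigma_n$ and $1/\sigma_n^2$ contributions with the asserted constants. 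The remainder $\tilde\nphi(H_n)=\sum_{p\in\Pc_n}\alpha_p(H_n)\nphi(p)$ is precisely of the form $\sum_{p\in\Pc_n}\xi_p\nphi(p)$ conditioned on the event $E_n:=\{\prod_{p\in\Pc_n}p^{\xi_p}\le n\}$, thanks again to Theorem \ref{thm:multiplicities}, and one checks that $\mu_n,\sigma_n^2$ in \eqref{eq:musigdef} match the unconditional mean and variance of this sum.

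For stage (iii), I embed $(\xi_p)_{p\in\Pc_n}$ into a Poisson space (geometric variables being functionals of Poisson random measures via their jump structure) and apply Stein's method for Gaussian approximation through the Mecke add-one-cost formula. Unconditionally this gives a CLT bound of order $(c_1+c_2)/\sigma_n$ by standard sum-of-independents Stein estimates, and the conditioning on $E_n$ is absorbed by writing expectations with respect to the conditional law as ratios and bounding the difference $\EE[f\1_{E_n}]\PP[E_n]^{-1}-\EE[f]$ for $f$ a solution of the relevant Stein equation. The main obstacle will be precisely this last point: controlling the influence of the conditioning event $E_n$ on Stein's solutions without losing the sharp $1/\sigma_n$ rate. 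Since $\PP[E_n]$ is only of order $1/\log n$, a naive bound would destroy the CLT rate, so one has to exploit that $\log\prod_p p^{\xi_p}$ is itself a well-concentrated additive functional close in scale to $\nphi$ and use the Mecke formula to perform ``add-one-prime'' surgery that controls the joint fluctuation; this is where the $\log\log n/\log n$ budget from stage (i) must be shown to dominate, so that the conditional correction can be absorbed into the already present third term rather than generating a new leading error.
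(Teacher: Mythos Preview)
Your plan has two genuine gaps, one in stage (i) and one in stage (iii), and the first is fatal as written.

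\textbf{Stage (i): the direct $J_n\to H_n$ comparison fails.} The mass ratio $\PP[H_n=k]/\PP[J_n=k]=n/(kL_n)$ ranges from roughly $n/\log n$ at $k=1$ to roughly $1/\log n$ at $k=n$, so there is no pointwise control and no useful coupling of $(J_n,H_n)$: for instance $\PP[J_n\le n/2]=1/2$ while $\PP[H_n\le n/2]\to 1$, so $\dtv(J_n,H_n)$ does not even tend to zero. The paper does \emph{not} compare $J_n$ and $H_n$ directly. Instead it inserts an auxiliary random prime: with $Q(k)$ uniform on $\{1\}\cup\Pc_k$ and independent of $H_n$, Lemma~\ref{lem:arra} shows $\dtv\bigl(J_n,\,H_nQ(n/H_n)\bigr)\le 61\log\log n/\log n$. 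One then uses additivity, $\nphi(H_nQ(n/H_n))=\nphi(H_n)+\nphi(Q(n/H_n))$ on the event that $Q(n/H_n)\nmid H_n$ (whose complement has probability $\le 6.4\log\log n/\log n$ by Lemma~\ref{lem:arra2}), together with the already-proved CLT for $\nphi(H_n)$ to absorb the bounded shift $\nphi(Q(n/H_n))$. This extra random prime is the missing idea; without it your transfer step does not go through.

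\textbf{Stage (iii): the conditioning is not the obstacle you think it is.} You write that $\PP[E_n]$ is only of order $1/\log n$ and that controlling the conditioning is the main difficulty. In fact $\PP[E_n]=L_n\prod_{p\in\Pc_n}(1-p^{-1})\ge 1/2$ for $n\ge 21$ (Theorem~\ref{thm:multiplicities}): the $\log n$ from $L_n$ exactly cancels the $1/\log n$ from Mertens' product, leaving $e^{-\gamma}\approx 0.56$. So the conditional expectations $\EE[\,\cdot\mid E_n]$ are bounded by $2\EE[\,\cdot\,\1_{E_n}]$, and the Mecke/Stein computation proceeds essentially as in the unconditional case, with the indicator $\1_{E_n}$ producing an extra term handled by the add-one formula $D_x\1_{E_n}$ and a tail estimate for $H_n$ (Lemma~\ref{l:HnTail}). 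No delicate joint-fluctuation argument is needed here; the error from the conditioning is of order $c_1/\sigma_n$, not $\log\log n/\log n$.

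Your stages (ii)--(iii) are otherwise close in spirit to the paper's Sections~\ref{mainthmWboundCLT}--\ref{mainthmKboundCLT}: the linearisation $\tilde\nphi$, the Poisson embedding of the geometrics, and the Mecke integration by parts are all the right ingredients for the $H_n$ CLT.
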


\begin{Remark}\label{eq:Remone}
As one can observe from the proof of Theorem \ref{thm:mainunif} (to be presented in Section \ref{mainthmWboundCLT}-\ref{sec:mainunifthm}),
the use of the normalizations $\mu_n$ and $\sigma_n$ is quite natural from a probabilistic perspective, as they represent the mean
and variance of an approximating sum of independent random variables. However, one should keep in mind
that any choice of asymptotically equivalent normalizing constants leads to an equivalent version of Theorem \ref{thm:mainunif},
provided that we
suitably modify the bounds. More precisely, let $m_n$ and $s_n$ be another pair of normalizing constants. Then it is simple to check that
\begin{align*}
\dk\Big(\frac{\psi(J_n)-m_n}{s_n}, N\Big)\le \dk\Big(\frac{\psi(J_n)-\mu_n}{\sigma_n}, N\Big) + \dk(N, W),
\end{align*}
where $W$ is a normal random variable with mean $(\mu_n-m_n)/s_n$ and variance $\sigma_n^2/s_n^2$. Similarly,  the relation $\dw(aX+c,aY+c)=|a|\dw(X,Y)$ for $a,c\in\RR$ and arbitrary random variables $X,Y$ implies
\begin{align*}
\dw\Big(\frac{\psi(J_n)-m_n}{s_n}, N\Big)\le \frac{\sigma_n}{s_n} \dw\Big(\frac{\psi(J_n)-\mu_n}{\sigma_n}, N\Big) + \frac{\sigma_n}{s_n}\dw(N, W'),
\end{align*}
where $W'$ is a normal random variable with mean $(m_n-\mu_n)/\sigma_n$ and variance $s_n^2/\sigma_n^2$. The additional error term in both bounds is simple to estimate, see Lemma \ref{l:GaussianDistance}.
\end{Remark}

\begin{Remark}\label{eq:Remone2}
We work out the concrete example of $\psi=\omega$ with normalizing constants $m_n=s_n^2=\log\log(n)\le \mu_n \le \sigma_n^2$. By Theorem \ref{thm:mainunif}, Lemma \ref{l:GaussianDistance} and  Mertens' formula (the two-sided bounds  \eqref{eq:Mertensinv}), we have
\begin{align*}
\dk\Big(\frac{\omega(J_n)-\log\log(n)}{\sqrt{\log\log(n)}}, N\Big)\le \frac{118.9}{\sqrt{\log\log(n)}} + \frac{823.1}{\log\log(n)} + \frac{67.4\log\log(n)}{\log(n)}\le \frac{599}{\sqrt{\log\log(n)}},
\end{align*}
which gives a quantitative version of LeVeque's conjecture with explicit constant. We leave the extension of the above argument to $\psi=\Omega$ and bounds in $\dw$ to the interested reader.
\end{Remark}

\noindent As a byproduct of our analysis, we will obtain as well analogous theorems for the case where the  $J_{n}$ are replaced by random variables supported in $[n]$ and taking the value $k$ with probability proportional to
$k^{-1}$ for $k\in\N$.

\begin{Theorem}\label{thm:CLTHarmonic}
Suppose that $\nphi$ satisfies \textbf{(H1)} and \textbf{(H2)}. Let $\{H_{n}\}_{n\geq 1}$ be a sequence of random variables supported in $\N\cap[n]$, with
\begin{align*}
\Pb[H_{n}=k]
  &=\frac{1}{L_nk},
\end{align*}
for $k\in\{1,\dots, n\}$, where $L_n:=\sum_{j=1}^{n}\frac{1}{j}$. Then, provided that $\sigma_{n}^2\geq 3(c_1^2+c_2^2)$,
\begin{align}\label{eq:Kolmogorov}
\dk\left(\frac{\nphi(H_{n})-\mu_n}{\sigma_n},\Gauss\right)
  &\leq \frac{\gamma_{1}}{\sigma_n} + \frac{\gamma_2}{\sigma_n^2},
\end{align}
and
\begin{align}\label{eq:Wasserstein}
\dw\left(\frac{\nphi(H_{n})-\mu_n}{\sigma_n},\Gauss\right)\le \frac{\gamma_3}{\sigma_n},
\end{align}
where $\mathcal{N}$ is a standard Gaussian random variable and
\begin{align}\label{eq:cidef}
\gamma_{1}  := 32c_1+33c_2,  \  \ \ \gamma_2:= 363c_1^2+58c_1c_2,
\ \  \ \ \ \ \
\gamma_{3}  :=105c_1+2c_2.
\end{align}
\end{Theorem}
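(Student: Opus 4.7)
The strategy has three stages: (i) represent $\nphi(H_n)$ as a conditionally independent sum via Theorem \ref{thm:multiplicities}, (ii) replace $\nphi$ on prime powers by its totally additive counterpart $\tilde\nphi$ characterized by $\tilde\nphi(p^\alpha)=\alpha\nphi(p)$, and (iii) apply Stein's method on a Poisson space to the resulting conditional sum of independent variables.

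\textbf{Representation.} The independence of $\{\xi_p\}_{p\in\Pc_n}$ gives, for every $k\in[n]$,
\[
\Pb\Big[\prod_{p\in\Pc_n}p^{\xi_p}=k\Big]=\prod_{p\in\Pc_n}(1-p^{-1})\,p^{-\alpha_p(k)}=k^{-1}\prod_{p\in\Pc_n}(1-p^{-1}),
\]
so that, conditionally on $\Ec_n:=\{\prod_{p\in\Pc_n}p^{\xi_p}\le n\}$, the product has the same law as $H_n$. Moreover $\Pb[\Ec_n]=L_n\prod_{p\in\Pc_n}(1-p^{-1})$, which Mertens' formula bounds below by a universal positive constant for $n\ge 27$. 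This is the content of Theorem \ref{thm:multiplicities} and yields
\[
\nphi(H_n)\stackrel{d}{=}\Big(\sum_{p\in\Pc_n}\nphi(p^{\xi_p})\Big)\;\Big|\;\Ec_n.
\]

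\textbf{Totally additive approximation.} The defect $\nphi(p^{\xi_p})-\xi_p\nphi(p)$ vanishes on $\{\xi_p\le 1\}$ and its squared sum over $p\in\Pc_n$ has expectation controlled by $c_2^2$ via assumption \textbf{(H2)}. Combined with $\Pb[\Ec_n]\ge c>0$, Chebyshev's inequality, and a standard smoothing inequality for $\dk$, this shows that replacing $\nphi(H_n)$ by $\tilde\nphi(H_n)=\sum_{p\in\Pc_n}\xi_p\nphi(p)$ costs $O((c_1+c_2)/\sigma_n)$ in both distances, and reduces the problem to a conditional sum of fully independent terms with mean $\mu_n$ and variance $\sigma_n^2$ (recall \eqref{eq:musigdef}).

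\textbf{Stein on Poisson space.} It remains to compare $W_n:=\sigma_n^{-1}\sum_{p\in\Pc_n}\nphi(p)(\xi_p-\E[\xi_p])$ conditional on $\Ec_n$ with $\Gauss$. Following the announced strategy, I realize $\{\xi_p\}_{p\in\Pc_n}$ as a deterministic functional of a Poisson point process and invoke Mecke's formula, which rewrites the Stein expectation $\E[f'(W_n)-W_nf(W_n)\,|\,\Ec_n]$ as an integral against the Palm intensity, where one $\xi_p$ is incremented by one. Termwise estimates using $|\nphi(p)|\le c_1$, $\E[\xi_p(\xi_p-1)]=O(p^{-2})$, and the definition of $\sigma_n^2$ then give the claimed $\gamma_1/\sigma_n+\gamma_2/\sigma_n^2$ bound for $\dk$ and $\gamma_3/\sigma_n$ for $\dw$.

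\textbf{Main obstacle.} The crux is the interaction of Stein's method with the conditioning on $\Ec_n$: classical couplings for sums of independent variables (size-bias, exchangeable pairs) do not transport cleanly under conditioning on an event that involves all coordinates simultaneously. Mecke's formula is precisely the device that bypasses this difficulty, by trading the conditioning for a Palm shift of the underlying Poisson process, after which independence is recovered and standard $L^2$ moment bounds apply. A secondary but genuine complication, specific to the $\dk$ bound, is the smoothing of the non-Lipschitz indicator $\mathbbm{1}_{(-\infty,z]}$, which is the source of the extra $1/\sigma_n^2$ term in \eqref{eq:Kolmogorov} and is absent from \eqref{eq:Wasserstein}.
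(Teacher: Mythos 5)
Your representation step and Poisson embedding are correct and match the paper. The genuine gap is in your \textbf{totally additive approximation} step for the Kolmogorov distance. You claim that passing from $\tilde W_n:=\sigma_n^{-1}(\sum_p \nphi(p^{\xi_p})-\mu_n)$ to $W_n:=\sigma_n^{-1}(\sum_p \nphi(p)\xi_p-\mu_n)$ costs $O((c_1+c_2)/\sigma_n)$ in $\dk$ via ``Chebyshev and a standard smoothing inequality.'' This does not give the claimed rate. The difference $\tilde W_n-W_n=\sigma_n^{-1}R_n$ is only controlled in $L^1$ (or $L^2$) under \textbf{(H1)}--\textbf{(H2)}; $R_n$ is not almost-surely bounded. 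The standard smoothing bound $\dk(X+Y,\Gauss)\le \dk(X,\Gauss)+\Pb[|Y|>\epsilon]+C\epsilon$ combined with Markov on $\E|R_n|=O(1)$ gives $O(\sigma_n^{-1/2})$ after optimizing $\epsilon$, and Chebyshev with $\E[R_n^2]=O(1)$ only improves this to $O(\sigma_n^{-2/3})$ --- neither reaches $O(\sigma_n^{-1})$. This is exactly why the paper, at the start of its Kolmogorov proof (Section 5), says it is ``not evident'' that the two conditional laws are close in $\dk$ and therefore runs Stein's method \emph{directly} on $\Lc(\tilde W_n\,|\,\scr A_n)$: it keeps $\tilde W_n$ inside the Stein solution $f$, swaps only the outside multiplier $\tilde W_n\to W_n$ (which costs $\sigma_n^{-1}\|f\|_\infty\E|R_n|=O(\sigma_n^{-1})$ with no smoothing), applies Mecke with $G=f(\tilde W_n)I_n$ and the linear kernel $\rho_n$ of $W_n$, and then handles the fact that $D_x\tilde W_n$ is a \emph{random} (non-linear) increment with the monotonicity and second-moment estimates of Steps III--IV.

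Two smaller points. First, your plan silently assumes the full replacement $\tilde W_n\to W_n$ is legitimate in both metrics, but the paper performs it only for $\dw$ (where $\dw\le\E|\cdot|$ applies); that asymmetry between the two proofs is essential, not cosmetic. Second, your diagnosis of the extra $\gamma_2/\sigma_n^2$ term as ``smoothing of the non-Lipschitz indicator'' is not accurate: in the paper's argument this term is generated by $\ep_{n,3}$ and $\ep_{n,2,2}$, which quantify the discrepancy $D_x\tilde W_n-\rho_n(x)$ coming from the non-total-additivity of $\nphi$; the indicator itself is handled exactly (no smoothing) via the monotonicity of $x\mapsto xf_z(x)$ and $x\mapsto-\1(x\le z)$.
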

\medskip

\noindent For $\NN_0$-valued additive functions, the following Poissonian approximations can be proved.

\begin{Theorem}\label{thm:PLTHarmonic}
Assume that $\psi:\NN\to \NN_0$ and
\begin{align}\label{e:positivity}
\lambda_n:=\sum_{p\in\Pc_n}\frac{\nphi(p)}{p-1}>0, \quad n\in\NN.
\end{align}
Let $\Prv_n$ be a Poisson random variable with intensity $\lambda_n$ and
suppose that $\nphi$ satisfies \textbf{(H1)} and \textbf{(H2)}. Then,
\begin{align}\label{eq:citildedef}
\dtv(\psi(H_n),\Prv_n)
  &\le \frac{\tilde{\gamma}_{1}}{\sqrt{\la_n}} +  \frac{\tilde{\gamma}_{2}}{\la_n} + \frac{2c_1}{\lambda_n}\sum_{p\in\Pc_n} \frac{|\psi(p)-1|}{p}.
\end{align}
where
\begin{align*}
\tilde{\gamma}_{1}  := 17c_1+2c_2,  \  \ \ \tilde{\gamma}_2:= 2.4c_1^2+8.2c_1c_2+4c_1.
\end{align*}
In particular, if $\nphi(p)=1$ for all $p\in\Pc$, then
\begin{align}\label{eq:PLTharmonic}
\dtv(\psi(H_n),\Prv_n)
  &\leq \frac{10}{\sqrt{\la_n}} + \frac{6.4 + 8.2c_2}{\la_n}.
\end{align}
\end{Theorem}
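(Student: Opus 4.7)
The plan is to combine the conditional independence representation from Theorem \ref{thm:multiplicities} with a Chen-Stein Poisson approximation carried out on the Poisson-space embedding that the paper's introduction advertises. By Theorem \ref{thm:multiplicities},
\[
\psi(H_n) \stackrel{d}{=} \sum_{p\in\Pc_n}\psi(p^{\xi_p}) \quad \text{under } \PP[\,\cdot\,\mid E_n],\qquad E_n := \Big\{\prod_{p\in\Pc_n} p^{\xi_p}\le n\Big\}.
\]
The strategy has three stages: (i) reduce to a totally additive approximant, (ii) run Chen-Stein on a sum of independent weighted Bernoullis, and (iii) pay for the mean correction and for the weighted nature of the indicators.

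For stage (i), let $\tilde\psi$ be the totally additive function with $\tilde\psi(p^\alpha)=\alpha\psi(p)$ and note that $\psi(p^{\xi_p})-\psi(p)\xi_p$ vanishes unless $\xi_p\ge 2$. A coupling argument then bounds $\dtv(\psi(H_n),\tilde\psi(H_n))$ by $\sum_{p\in\Pc_n}\EE[|\psi(p^{\xi_p})-\psi(p)\xi_p|\mathbbm{1}\{\xi_p\ge 2\}]$, modulated by $\PP(E_n)^{-1}$. Using (H1) to control $\psi(p)\xi_p$ in $L^2$ and (H2) to control $\EE[\psi(p^{\xi_p+2})^2]/p^2$, this yields an $O(c_1c_2)$ contribution absorbed in $\tilde\gamma_2/\lambda_n$.

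For stage (ii), decompose $\xi_p = \mathbbm{1}\{\xi_p\ge 1\} + (\xi_p-1)_+$ so that $\tilde\psi(H_n) \stackrel{d}{=} B_n + R_n$ with $B_n := \sum_p \psi(p)\mathbbm{1}\{\xi_p\ge 1\}$ a sum of independent weighted Bernoullis and $R_n$ a remainder of expectation $\sum_p \psi(p)/(p(p-1)) = O(c_1)$. The Chen-Stein bound for $\NN_0$-valued sums of independent random variables, implemented via Mecke's formula on the Poisson embedding suggested by the paper's introduction, yields $\dtv(B_n,\mathrm{Poi}(\nu_n))$ of order $\sum_p \psi(p)^2/p^2\cdot 1/\sqrt{\nu_n}$ with $\nu_n := \sum_p \psi(p)/p$, producing the leading $\tilde\gamma_1/\sqrt{\lambda_n}$ term. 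Adding $R_n$ back costs $\EE[R_n]/\lambda_n = O(c_1/\lambda_n)$.

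For stage (iii), passing from $\mathrm{Poi}(\nu_n)$ to the target $\mathrm{Poi}(\lambda_n)$ costs $O(|\nu_n-\lambda_n|/\sqrt{\lambda_n})$, which together with Mertens' formula is $O(1/\sqrt{\lambda_n})$ and absorbed into $\tilde\gamma_2$. The residual term $(2c_1/\lambda_n)\sum_p |\psi(p)-1|/p$ stems from comparing the weighted Bernoulli sum $B_n$ to the pure Bernoulli sum $\sum_p \mathbbm{1}\{\xi_p\ge 1\}$ that would be appropriate if $\psi(p)\equiv 1$, the prefactor $2c_1$ reflecting the uniform bound on $\psi(p)$. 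The main obstacle is implementing the Chen-Stein/Mecke identity under the conditional measure $\PP[\cdot\mid E_n]$, since conditioning destroys exact independence; I expect to resolve this by running the Stein argument on the unconditional Poisson space and tracking the perturbation induced by restriction to $E_n$, with the resulting cost absorbed in the $\tilde\gamma_2/\lambda_n$ summand. When $\psi(p)\equiv 1$ this residual correction vanishes and a careful re-examination of the constants in stages (i)--(iii) yields the sharper bound \eqref{eq:PLTharmonic}.
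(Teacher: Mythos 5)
Your proposal differs materially from the paper's proof, and one of your intermediate reductions does not work as stated.

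The paper does not compare $\psi(H_n)$ with the totally additive approximant $\tilde\psi(H_n)$ at the level of distributions. Instead it keeps the Stein function evaluated at $\tilde Y_n=Y_n+R_n$ (where $\tilde Y_n|\scr A_n\overset{d}{=}\psi(H_n)$) throughout, and only pulls $R_n$ out of the \emph{multiplicative} position in the Stein expression, writing
$\E[\tilde Y_n f(\tilde Y_n)I_n]=\E[Y_n f(\tilde Y_n)I_n]+\E[R_n f(\tilde Y_n)I_n]$, so that the second term acquires a factor $\|f\|_\infty\le\la_n^{-1/2}$ and contributes $(c_1+2c_2)\la_n^{-1/2}$. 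Then Mecke's formula is applied to $\E[Y_n f(\tilde Y_n)I_n]$ directly (with $\la_n=\EE[Y_n]$ as the Poisson target from the start), which produces the Chen--Stein structure, the boundary error $D_xI_n$, and the $\sum_p |\psi(p)-1|/p$ defect all in one pass. There is no intermediate Poisson with parameter $\nu_n$ and no Bernoulli decomposition.

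The concrete gap in your stage~(i): the quantity you propose to control, $\dtv(\psi(H_n),\tilde\psi(H_n))$, does \emph{not} decay. The coupling bound you write is essentially $\EE[|R_n|]$ (after the $\PP(E_n)^{-1}$ modulation), and by Lemma~\ref{l:reduction2} this is only $O(c_1+c_2)$, a constant; it cannot be ``absorbed into $\tilde\gamma_2/\la_n$''. For instance for $\psi=\omega$ one has $\omega(p^{\xi_p})-\tilde\omega(p^{\xi_p})=1-\xi_p\neq 0$ whenever $\xi_p\ge 2$, which happens with probability $\sum_p p^{-2}\approx 0.45$, so these two random variables really do differ with constant probability. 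The only way to exploit the smallness of $R_n$ is to let it sit where a bounded Stein solution multiplies it, as the paper does; replacing $\psi(H_n)$ by $\tilde\psi(H_n)$ under an arbitrary indicator test function loses the $\la_n^{-1/2}$ factor entirely.

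A related issue in stage~(ii): the classical Chen--Stein bound for sums of independent Bernoullis has order $\la^{-1}\sum p_i^2$, not $\la^{-1/2}\sum p_i^2$; the $\la_n^{-1/2}$ order in the statement does not come from the Poisson approximation of $B_n$ but from the $R_n$ remainder and the boundary terms $D_xI_n$ arising from the conditioning on $\scr A_n$. Also, $B_n=\sum_p\psi(p)\1(\xi_p\ge 1)$ is a weighted (compound-Bernoulli) sum, so a direct Poisson comparison already carries the $\sum_p|\psi(p)-1|/p$ defect; it does not appear only in your stage~(iii). Your instinct about handling the conditioning by running the Stein argument on the unconditional Poisson space and tracking the perturbation induced by $I_n$ is exactly the mechanism the paper uses, so that part of your plan is sound; the missing idea is that the additive remainder $R_n$ must stay inside the Stein operator.
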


\begin{Theorem}\label{t:poisson} Suppose that $\nphi: \NN\to\NN_0$ satisfies \textbf{(H1)}, \textbf{(H2)} and \eqref{e:positivity}.
\begin{itemize}
\item[(i)] We have
 \begin{align}\label{eq:t:poisson}
\dk(\psi(J_n),M_n)
\le \frac{\tilde{\kappa}_1}{\sqrt{\la_n}} +  \frac{\tilde{\kappa}_2}{\la_n} +
\frac{4c_1}{\lambda_n}\sum_{p\in\Pc_n} \frac{|\psi(p)-1|}{p} + \kappa_3\frac{\log\log(n)}{\log(n)}.
\end{align}
where
\begin{align}\label{eq:kappatildedef}
\begin{array}{lll}
\tilde{\kappa}_1:=
51c_1+6c_2+1 &
\tilde{\kappa}_2:=
7.2c_1^2+24.6c_1c_2+12c_1+2.4(c_1\vee 1)&\kappa_3:=67.4\\
\end{array}
\end{align}
\item[(ii)] Assume further that $\psi(p)=1$ for all $p\in\Pc$.  Then
\begin{align*}
\dtv(\psi(J_n),M_n)\le \frac{18+2c_2}{\sqrt{\la_n}} +  \frac{6.4+8.2c_2}{\la_n} + \kappa_3\frac{\log\log(n)}{\log(n)}.
\end{align*}
\end{itemize}
\end{Theorem}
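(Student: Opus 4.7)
The strategy is to bridge $\psi(J_n)$ to $M_n$ through the intermediate variable $\psi(H_n)$, exploiting the sharp Poisson approximation for $\psi(H_n)$ already supplied by Theorem \ref{thm:PLTHarmonic}.

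For part (i), I would apply
$$\dk(\psi(J_n), M_n) \;\le\; \dk(\psi(J_n),\psi(H_n)) + \dk(\psi(H_n), M_n),$$
bound the second summand by Theorem \ref{thm:PLTHarmonic} via the trivial inequality $\dk\le\dtv$, and bound the first summand by the ``uniform-versus-harmonic'' switching lemma that already produces the $\kappa_3\log\log(n)/\log(n)$ term in Theorem \ref{thm:mainunif}. Concretely, the switching is a pointwise estimate
$$\PP[\psi(J_n)\le z]-\PP[\psi(H_n)\le z]=\sum_{k=1}^{n}\Indi{\psi(k)\le z}\Big(\tfrac{1}{n}-\tfrac{1}{L_n k}\Big),$$
controlled uniformly in $z\in\R$ by $L_n=\log(n)+O(1)$ together with Mertens-type inputs, and insensitive to whether the target distribution is Gaussian or Poisson. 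The factor-of-three scaling of the constants $\tilde\kappa_1,\tilde\kappa_2$ relative to $\tilde\gamma_1,\tilde\gamma_2$, and the doubled coefficient ``$4c_1$'' in front of $\sum_p |\psi(p)-1|/p$, presumably reflect that the switching introduces a second copy of the comparison between $\psi$ and its ``totally additive'' cousin $\tilde\psi$ on which the conditioned-independence framework of Section~\ref{sec:main} operates.

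For part (ii), the same triangle inequality in $\dtv$ reduces the problem to estimating $\dtv(\psi(J_n),\psi(H_n))$, since $\dtv(\psi(H_n),M_n)$ is provided by Theorem \ref{thm:PLTHarmonic}(ii) under $\psi(p)=1$. This switching step is the main obstacle: $\dtv(J_n,H_n)$ itself is of constant order, so the data-processing inequality $\dtv(\psi(J_n),\psi(H_n))\le\dtv(J_n,H_n)$ is useless. Instead, I would truncate at a cutoff $\eta_n=n^{c/(\log\log n)^2}$ in the Kubilius--Harper style, set $\psi^\star(k):=\sum_{p\mid k,\,p\le\eta_n}\psi(p)$, and note that $\psi^\star\ne\psi$ only on events of probability $O(\log\log(n)/\log(n))$ under either $J_n$ or $H_n$. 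The residual quantity $\dtv(\psi^\star(J_n),\psi^\star(H_n))$ can then be read off the joint law of the small-prime valuations $(\alpha_p)_{p\le\eta_n}$ under the two samplers, which differ only through the proportionality factor $L_n k/n$ and can be compared by a direct pmf computation. Matching the stated explicit constants $\tilde\kappa_1,\tilde\kappa_2$ is then routine bookkeeping against the Mertens remainder terms already appearing in the proof of Theorem \ref{thm:PLTHarmonic}.
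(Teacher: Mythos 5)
There is a genuine gap in both parts. In part (i), you claim the first summand $\dk(\psi(J_n),\psi(H_n))$ is controlled by ``the uniform-versus-harmonic switching lemma that already produces the $\kappa_3\log\log(n)/\log(n)$ term,'' but that is not what Lemma \ref{lem:arra} says. That lemma bounds $\dtv\bigl(J_n,\,H_n Q(n/H_n)\bigr)$, where $Q(n/H_n)$ is a random \emph{extra prime factor}, not $\dtv(J_n,H_n)$. Since $H_n$ is size-biased toward small integers, $J_n$ is approximately $H_n$ times one more (typically new, typically large) prime, so $\psi(J_n)$ is distributionally close to $\psi(H_n)+1$, not to $\psi(H_n)$; consequently $\dk(\psi(J_n),\psi(H_n))$ is of order $1/\sqrt{\lambda_n}$ (the cost of shifting a near-Poisson variable by one unit), not $\log\log n/\log n$. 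Your own remark in part (ii) that $\dtv(J_n,H_n)$ is of constant order already signals that the pointwise identity you display cannot be bounded uniformly in $z$ by $\log\log n/\log n$. The paper therefore uses four intermediate comparisons, not two: $\psi(J_n)\to\psi(H_nQ(n/H_n))\to\psi(H_n)+1\to M_n+1\to M_n$, with Lemma \ref{lem:arra} controlling the first leg, Lemma \ref{lem:arra2} (via the event $\{Q(n/H_n)\text{ divides }H_n\}$) the second, Theorem \ref{thm:PLTHarmonic} the third, and a separate Stein estimate $\dtv(M_n+1,M_n)\le 1/\sqrt{\lambda_n}$ the fourth, which is the source of the ``$+1$'' in $\tilde{\kappa}_1$. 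The Kolmogorov step additionally invokes Poisson anticoncentration, which (not the mechanism you suggest) accounts for the roughly factor-of-three in $\tilde{\kappa}_1,\tilde{\kappa}_2$ relative to $\tilde{\gamma}_1,\tilde{\gamma}_2$ and the doubled coefficient $4c_1$.

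For part (ii), the Kubilius--Harper truncation you propose cannot deliver the theorem's bound, and one of its probability estimates is false. With $\eta_n=n^{c/(\log\log n)^2}$, the event $\{\psi^\star(J_n)\ne\psi(J_n)\}$ is essentially the event that $J_n$ has a prime factor larger than $\eta_n$, whose probability tends to \emph{one} (its complement, the $\eta_n$-smooth event, has probability tending rapidly to zero). What is small is the expected discrepancy $\E[\psi(J_n)-\psi^\star(J_n)]=O(\log\log\log n)$, and as the paper's introduction explains at length, this is exactly why truncation approaches incur an extra $\log\log\log n$ factor and cannot reach the rate $O(\lambda_n^{-1/2})+O(\log\log n/\log n)$ stated in the theorem. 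The paper avoids truncation entirely: it uses the same $H_nQ(n/H_n)$ coupling, Lemma \ref{lem:arra2} to get $\dtv\bigl(\psi(H_nQ(n/H_n)),\psi(H_n)+1\bigr)\le \PP[Q(n/H_n)\text{ divides }H_n]\le 6.4\log\log n/\log n$ under $\psi(p)\equiv 1$, Theorem \ref{thm:PLTHarmonic}(ii), and the shift estimate $\dtv(M_n+1,M_n)\le 1/\sqrt{\lambda_n}$.
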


\begin{Remark}
Provided that the bounds from Theorems \ref{thm:PLTHarmonic} and \ref{t:poisson} are of the order $\lambda_n^{-\frac{1}{2}}$,
we can obtain an alternative approach for proving Theorems \ref{thm:mainunif} and \ref{thm:CLTHarmonic}; as one can first approximate
the law of $\nphi(J_{n})$ (respectively $\nphi(H_{n})$) with a Poisson distribution, and then the normalized Poisson distribution
with a standard Gaussian law. However, one should keep in mind that
for a large family of arithmetic additive functions $\nphi$, the term
\begin{align*}
\frac{1}{\lambda_n}\sum_{p\in\Pc_n} \frac{|\psi(p)-1|}{p}
\end{align*}
might not converge to zero, which will prevent us from obtaining Gaussian approximations from Theorems \ref{thm:PLTHarmonic} and \ref{t:poisson}.
\end{Remark}

\begin{Remark}
Up to a multiplicative constant independent of $n$, the bound from Theorem \ref{t:poisson}  implies the one presented in the paper \cite[Theorem~7.2]{BaKoNi}.
\end{Remark}

\noindent One should observe that Remarks \ref{eq:Remone} and \ref{eq:Remone2} apply as well to Theorems \ref{thm:CLTHarmonic}-\ref{t:poisson}
for a suitable modification of the upper bounds appearing therein.\\

\noindent The rest of the paper is organized as follows. In Section \ref{sec:prelim} we present some useful preliminaries on
number theoretical results, divisibility properties of $\Lc(J_{n})$, Stein's method and integration by parts for Poisson functionals.
In sections \ref{mainthmWboundCLT}-\ref{sec:mainunifthm}
we present the proofs of Theorem \ref{thm:CLTHarmonic} and Theorem \ref{thm:mainunif}. The Poisson approximation results from
Theorems \ref{t:poisson} and \ref{thm:PLTHarmonic} are proved in Section \ref{sec:theorempoisson}. Finally, in the appendix we state and prove a
generalized version of Lemma \ref{thm:multiplicities} (whose elementary version plays a fundamental role in our methodology), as well
as some useful estimates.

\section{Auxiliary results}\label{sec:prelim}
\subsection{Elementary results from number theory}\label{Sec:ReultsNT}
We present the number theoretic results that will be required for our computations.
With the exception of the estimations on the prime counting function $\pi:\N\rightarrow\N$, all of these results can
be proved fairly easily.\\

\noindent\textit{Prime counting function inequalities}\\
Denote by $\pi:[1,\infty)\rightarrow\N$ the prime counting function, defined by
\begin{align*}
\pi(x)
  &:=|\Pc\cap[1,x]|.
\end{align*}
The existence of infinitely many primes implies that $\pi(n)$ converges to infinity as $n\rightarrow\infty$. There have been many efforts
to address the highly non-trivial task of describing as sharply as possible the asymptotic behavior of this function. The interested reader
is referred to the book \cite{Ten} for a historical compendium of some of the most popular methods that have been used to achieve this goal. In this
manuscript, we will use two results related to this problem, which we state next: for every $n\geq 1$, we have that
$$\pi(n)\leq\frac{1.5 n}{\log(n)}.$$
Furtheremore, if $n\geq17$, then
\begin{align}\label{eq:pinlowerb}
\pi(n)
  &\geq\frac{n}{\log(n)},
\end{align}
and if $n\geq229$,
\begin{align}\label{eq:Trudgian}
\left|\pi(n)-\int_0^{n}\frac{1}{\log(t)}dt\right|
  &\leq \frac{0.2795n}{\log(n)^{\frac{3}{4}}}\exp\bigg{\{}-\sqrt{\frac{\log(n)}{6.455}}\bigg{\}}
	\leq \frac{181n}{\log(n)^{3}},
\end{align}
where in the last inequality we used the fact that $x^{4.5}e^{-x}\leq 9.7$. In particular, since $|\int_{0}^n\frac{1}{\log(t)}dt-n\log(n)^{-1}-n\log(n)^{-2}|\leq 3n\log(n)^{-3}$,
\begin{align}\label{eq:Trudgian2}
|\pi(n)-\frac{n}{\log(n)}-\frac{n}{\log(n)^2}|
  &\leq \frac{184n}{\log(n)^3}.
\end{align}
The proofs of \eqref{eq:pinlowerb} and \eqref{eq:Trudgian} can be found in \cite{Rosser} and \cite{Trud}, respectively.\\

\noindent \textit{Rosser and Schoenfeld inequalities}\\
We will require as well suitable bounds for $\prod_{p\in\Pc_n}(1-p^{-1})$. The bound that we will use was first presented in the paper \cite{Rosser}
(see as well \cite[page 17]{Ten}). There exists a function $g:\R_{+}\rightarrow\R$, such that for all $n\in\N$, $|g(n)\log(n)|\leq 2$ and
\begin{align*}
\prod_{p\in\Pc_n}(1-p^{-1})
  &=\frac{e^{-\gamma}}{\log(n)}e^{g(n)},
\end{align*}
where $\gamma\approx0.577$ is Euler's constant. In addition, we have that
\begin{align}\label{bound:primeprod}
\prod_{p\in\Pc_n}(1-p^{-1})
  &>e^{-\gamma} \log(n)^{-1}(1-\log(n)^{-2}),
\end{align}
where $\gamma\approx0.577$ is Euler's constant, see \cite[page 17]{Ten}.\\

\noindent\textit{Divisibility probabilities for }$J_{n}$\\
Throghout this manuscript, we will repeadetely use the fact that the probability that a given positive integer $d\in\N$ divides
the random variable $J_n$ can be expressed as
\begin{align}\label{eq:probdivide}
\Pb[d\text{ divides } J_{n}]
  &=\frac{1}{n}\sum_{k=1}^n\Indi{d\ \text{  divides }\ k}
	=\frac{1}{n}\left\lfloor\frac{n}{d}\right\rfloor.
\end{align}

\noindent\textit{Mertens' formulas}\\
It is a well-known, elementary result from number theory, that sums of the form
$\sum_{p\in\Pc_n}\frac{\log(p)}{p}$ and $\sum_{p\in\Pc_n}\frac{1}{p}$, with  $n\in\N$, can be easily estimated as described below.
Such results are attributed to Franz Mertens
\begin{align}\label{eq:Mertenslog}
\log(n)-2
  &\leq\sum_{p\in\Pc_n}\frac{\log(p)}{p}<\log(n).
\end{align}
The proof is given in \cite[page 14]{Ten}. In addition, there exists a constant $C>0$, with $C\approx0.261$, such that
\begin{align}\label{eq:Mertensinv}
\log\log(n)
  &\leq \sum_{p\in\Pc_n}\frac{1}{p}
  \leq\log\log(n)+C+\frac{2}{\log(n)}.
\end{align}
For a proof, see for instance \cite[page 15]{Ten}.

\subsection{Approximating $\mathcal{L}(J_n)$ with $\Lc(H_n)$}\label{Sec:Arratiatype}
In this section we present a link between the probability laws of the random variables $J_n$ and $H_n$. To achieve this, we will make
use of ideas that are close in spirit to those from \cite[Sections~1.2~and~3.6]{MRArratia}.
Let $\{H_{n}\}_{n\geq 1}$ be given as before.
Let $\{Q(k)\}_{k\geq 1}$ be a sequence of random variables defined in $(\Omega,\Fc,\Pb)$, independent of $(J_n,H_n)$ and satisfying
the property that $Q(k)$ has uniform distribution over the set
\begin{align}\label{eq:Pckstardef}
\Pc_k^*
  &:=\{1\}\cup\Pc_k.
\end{align}
Namely,
\begin{align*}
\Pb[Q(k)=j]
  &=\frac{1}{\pi(k)+1},
\end{align*}
for $j\in\{1\}\cup\Pc_k$.
\begin{Lemma}\label{lem:arra}
Let $J_n, H_n$ and $\{Q(k)\}_{k\geq 1}$ be as before. Then, for $n\ge 21$, we have
\begin{align*}
\dtv(J_n, H_n Q(n/H_n))\le 61\frac{\log\log n}{\log n}.
\end{align*}
\end{Lemma}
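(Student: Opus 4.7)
The plan is to compute $\Pb[X_n = m]$ for $X_n := H_n Q(\lfloor n/H_n\rfloor)$ explicitly, compare with $\Pb[J_n = m] = 1/n$ term by term, then sum using the prime-counting estimates of Section~\ref{Sec:ReultsNT}. Conditioning on $H_n = \ell$ and using the independence of $H_n$ from $\{Q(k)\}_k$ yields
\begin{equation*}
\Pb[X_n = m] \;=\; \frac{1}{L_n}\sum_{\ell \in T(m)} \frac{1}{\ell\bigl(\pi(\lfloor n/\ell\rfloor)+1\bigr)}, \quad T(m) := \{m\}\cup\{m/p : p \in \Pc,\ p \mid m\},
\end{equation*}
where $|T(m)| = 1+\omega(m)$ (the admissibility constraint $p \le \lfloor n/(m/p)\rfloor$ is automatic from $m \le n$). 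A quick swap of summations gives $\sum_m \Pb[X_n=m] = \sum_\ell 1/(L_n \ell) = 1$.

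The second step is to replace $\pi(\lfloor n/\ell\rfloor)+1$ by its asymptotic $(n/\ell)/\log(n/\ell)$ via the Trudgian-type bound \eqref{eq:Trudgian2}, valid when $\lfloor n/\ell \rfloor \ge 229$. The remaining range $\ell > n/229$ contributes at most $\sum_{\ell > n/229}1/(L_n \ell) \le \log(229)/L_n = O(1/\log n)$ to $\Pb[X_n \in \cdot]$, and its uniform counterpart is of the same order, so both are absorbed in the final bound. For the main range, combining the Trudgian expansion with $L_n = \log n + \gamma + O(1/n)$ and the identity
\begin{equation*}
\sum_{\ell \in T(m)}\log(n/\ell) \;=\; (1+\omega(m))\log(n/m) + \sum_{p\mid m}\log p
\end{equation*}
produces the approximation
\begin{equation*}
\Pb[X_n = m] - \tfrac{1}{n} \;=\; \frac{\omega(m)\log(n/m) \;-\; \sum_{p\mid m}(\alpha_p(m)-1)\log p}{L_n\, n} \;+\; \varepsilon_m,
\end{equation*}
with $\sum_m |\varepsilon_m| = O(1/\log n)$ by the same Mertens/Rosser bounds (\eqref{bound:primeprod} and \eqref{eq:Mertensinv}).

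Taking absolute values and swapping sums yields the driving estimate
\begin{equation*}
\sum_{m=1}^n \omega(m)\log(n/m) \;=\; \sum_{p \in \Pc_n}\sum_{k=1}^{\lfloor n/p\rfloor}\log\!\tfrac{n}{pk} \;\le\; \sum_{p \in \Pc_n}\lfloor n/p\rfloor \;\le\; n\bigl(\log\log n + C\bigr),
\end{equation*}
using $\sum_{k=1}^{N}\log(N/k) \le N$ (Stirling) and Mertens' formula \eqref{eq:Mertensinv}. The second correction $\sum_{m\le n}\sum_{p\mid m}(\alpha_p(m)-1)\log p \le n \sum_{p}(\log p)/(p(p-1)) = O(n)$ is smaller by a factor $\log\log n$. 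Dividing by $L_n n \ge n\log n$, twice $\dtv(J_n,X_n)$ is of order $\log\log n/\log n$, as required.

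The main obstacle I anticipate is the bookkeeping needed to reach the explicit constant $61$: several error sources must be tracked simultaneously, namely (i) the Trudgian remainder, which contributes $O(1/\log^2(n/\ell))$ relatively per term; (ii) the boundary range $\lfloor n/\ell\rfloor < 229$, handled by trivial bounds together with \eqref{bound:primeprod}; and (iii) the deficit $L_n - \log n$. Each is summable against $\sum_\ell 1/(\ell\log(n/\ell))$, a quantity again controlled by Mertens, so in principle the constants can be carried through---the careful accounting is where the bulk of the effort sits.
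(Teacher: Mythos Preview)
Your plan is correct and follows the same overall architecture as the paper: compute $\Pb[H_nQ(n/H_n)=m]$ explicitly as a sum over $\ell\in T(m)$, replace $1/(1+\pi(\lfloor n/\ell\rfloor))$ by its logarithmic approximation, and control the resulting main term---essentially $\omega(m)\log(n/m)+\log s(m)-L_n$---via Mertens-type bounds. Two tactical differences are worth flagging. First, instead of splitting off the range $\lfloor n/\ell\rfloor<229$ and invoking \eqref{eq:Trudgian2} on the rest, the paper simply bounds $K:=\sup_{x\ge1}\bigl|x/(1+\pi(x))-(\log x-1)\bigr|$ uniformly (getting $K\le34.1$); this absorbs the boundary and the second-order Trudgian term in one stroke and is what makes the constant-tracking manageable. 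Second, for the main term the paper applies Cauchy--Schwarz, pairing $\E[\omega(J_n)^2]^{1/2}$ with $\E[(\log(n/J_n)-1)^2]^{1/2}$, whereas you swap sums directly to get $\sum_{m\le n}\omega(m)\log(n/m)=\sum_{p}\sum_{k\le n/p}\log(n/(pk))\le\sum_p\lfloor n/p\rfloor$; your route is slightly more elementary and avoids the second-moment lemma, though the Stirling step actually gives a factor closer to $2\lfloor n/p\rfloor$ than $\lfloor n/p\rfloor$. Either way the bookkeeping lands at the same $O(\log\log n/\log n)$ with comparable constants.
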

\begin{proof}
For each $m\in[n]$, one has
\begin{align*}
\PP[H_n Q(n/H_n)=m]
  &= \sum_{\substack{p\in\Pc_n^{*}\\p|m}} \PP\Big[H_n= m/p, Q(\floor{np/m}) = p\Big] =\frac{1}{nL_n}\sum_{\substack{p\in\Pc_n^{*}\\p|m}}\frac{np/m}{1+\pi( np/m )}.
\end{align*}
Then,
\begin{align*}
|\PP[H_n Q(n/H_n)=m]  - \PP[J_n=m]|\le \frac{1}{nL_n}| \sum_{\substack{p\in\Pc_n^{*}\\p|m}} (\log(np/m)-1)  - L_n | + \frac{K}{nL_n}(1+\omega(m)),
\end{align*}
where
\begin{align*}
K:=\sup_{x\geq 1}\Big|\frac{x}{1+\pi(x)} - (\log x -1)\Big|.
\end{align*}
Therefore, setting $s(m):=\prod_{p|m}p$, one has
\begin{align}\label{eq:Arratiabound}
&\dtv(J_n,H_nQ(n/H_n)) = \frac{1}{2}\sum_{m=1}^n |\PP[H_nQ(n/H_n)=m]-\PP[J_n=m]|\leq R_1+R_2,
\end{align}
where
\begin{align*}
R_1
  &:=\frac{1}{2nL_n}|\sum_{m=1}^n (1+\omega(m))(\log(n/m)-1) +  \log s(m) - L_n|,\\
R_{2}
  &:=\frac{K}{2nL_n}\sum_{m=1}^n(1+\omega(m)).
\end{align*}
To bound $R_1$, we use the fact that $s(m)\leq\log(n)<L_n$ for all $m\leq n$, to deduce that
\begin{align*}
R_{1}
  &\leq \frac{1}{2L_n} \EE[|\log(n/J_n)-1|]
	+\frac{1}{2L_n} \EE[|\omega(J_n)(\log(n/J_n)-1)|]+\frac{1}{2L_n} |\EE[L_n - \log s(J_n)]|.
\end{align*}
By applying Cauchy-Schwarz inequality to the first two terms, we deduce that $R_{1}$ is bounded from above by $R_{1,1}+R_{1,2}$, where
\begin{align*}
R_{1,1}
  &:= \frac{1}{2L_n} (\EE[\omega(J_n)^{2}]^{\frac{1}{2}}+1)\EE[| \log(n/J_n)-1 |^{2}]^{\frac{1}{2}}\\
R_{1,2}
  &:=\frac{1}{2L_n}|\EE[L_n - \log s(J_n)]|.
\end{align*}
The term $R_{1,1}$ can be bounded by using the integral approximation
\begin{align*}
\EE[(\log(n/J_n) -1 )^2]
  &= 1+\frac{1}{n}\sum_{m=1}^{n-1}\log(n/m)^2-\frac{2}{n}\sum_{m=1}^{n-1}\log(n/m)\\
	&\leq  1+\sum_{m=1}^{n-1}\int_{\frac{m-1}{n}}^{\frac{m}{n}}\log(1/x)^2dx-2\sum_{m=1}^{n-1}\int_{\frac{m}{n}}^{\frac{m+1}{n}}\log(1/x)dx\\
	&\leq 1+2\int_{0}^{\frac{1}{n}}\log(1/x)dx,
\end{align*}
where the last step follows from the fact that $\int_{0}^{1}\log(x)^2dx=2$ and $\int_{0}^{1}\log(1/x)dx=1$. Thus, using the condition
$n\geq 21$, we obtain
\begin{align*}
\EE[(\log(n/J_n) -1 )^2]
  &\leq 1.4.
\end{align*}
As a consequence, by \eqref{eq:Eomegasqu}, we have
$$R_{1,1} \le 2\frac{\log\log n}{L_n}.$$
On the other hand, it is easy to see by integral approximation that
\begin{align*}
\log(n)-1
  &=\frac{1}{n}\int_{0}^{n}\log(x)dx
	\leq \frac{1}{n}\sum_{m=1}^n\log(m)
	\leq \frac{1}{n}\int_{1}^{n+1}\log(x)dx=\frac{n+1}{n}\log(n+1)-1,
\end{align*}
which, combined with the relation $\log(n+1)\leq L_n\leq \log(n)+1$, leads to

\begin{align*}
|L_n-\EE [\log (J_n)]|\leq 2.
\end{align*}
On the other hand,
\begin{align*}
\EE[\log(J_n) - \log (s(J_n))] &=\sum_{p\in\mathcal P_n} \EE[(\al_p(J_n)-1) \mathbbm{1}(\al_p(J_n)\ge 2)] \log (p) \\
&= \sum_{p\in\mathcal P_n} \EE[(\al_p(J_n)-1)_+] \log (p)\\
&\le \sum_{p\in\mathcal P_n} \log (p)\sum_{k\ge 1} \PP[\al_p(J_n)\ge k] = \sum_{p\in\mathcal P_n} \frac{\log (p)}{p^2}(1+(1-p^{-1})^{-1})\le 3,
\end{align*}
so that $R_{1,2}\leq\frac{5}{2L_n}$. We thus conclude that
\begin{align}\label{eq:R1arratiaboundf}
R_{1}\leq \frac{4.5\log\log (n)}{L_n}\leq \frac{4.5\log\log(n)}{\log(n)}.
\end{align}
\noindent To bound $R_2$, we use \eqref{eq:Eomegasqu} as well as the condition $n\geq 21$, to show that
$$\frac{1}{2n}\sum_{m=1}^n(1+\omega(m))\leq \frac{1}{2}\log\log(n)(1+\frac{2.5}{\log\log(n)})\leq 1.63\log\log(n),$$
which leads to
\begin{align}\label{eq:R2boundprevf}
R_{2}
  &\leq \frac{1.63K}{L_n}\log\log(n)\leq\frac{1.63K\log\log(n)}{\log(n)}.
\end{align}
It thus remains to bound $K$. To this end, we notice that by \eqref{eq:pinlowerb} and \eqref{eq:Trudgian2}, for every $x\geq 229$,
\begin{align*}
\Big|\frac{x}{1+\pi(x)} - (\log(x) -1)\Big|
  &\leq \frac{\log(x)}{x}\Big|x -(1+\pi(x)) (\log(x) -1)\Big|\\
	&\leq \frac{\log(x)}{x}\Big|x -(1+\frac{x}{\log(x)}+\frac{x}{\log(x)^2}) (\log(x) -1)\Big|
	+\frac{184}{\log(x)}\\
	&= \frac{\log(x)}{x}\Big|1+\frac{x}{\log(x)^2}-\log (x)\Big|
	+\frac{184}{\log(x)},
\end{align*}
so that
\begin{align*}
\Big|\frac{x}{1+\pi(x)} - (\log(x) -1)\Big|
  &\leq \frac{185}{\log(x)}\le 34.1.
\end{align*}
For $1\le x\le 229$, by applying the bound
\begin{align*}
\left|\frac{x}{1+\pi(x)} - (\log(x)-1)\right|
  &\leq \left|\frac{x}{1+\frac{x}{\log(x)}}\right| + \left|\log(x)-1\right|
\le 10
\end{align*}
for $x\geq 17$ and
\begin{align*}
\left|\frac{x}{1+\pi(x)} - (\log(x)-1)\right|
  &\leq \frac{1}{2}x  + \left|\log(x)-1\right|
\le 11,
\end{align*}
for $x\leq 17$, we obtain
\begin{align*}
\left|\frac{x}{1+\pi(x)} - (\log(x)-1)\right|
  &\leq  11.
\end{align*}
Consequently, $K\leq 34.1$ and thus, by \eqref{eq:R2boundprevf},
\begin{align}\label{eq:R2arratiaboundf}
R_{2}
  &\leq \frac{55.6\log\log(n)}{\log(n)}.
\end{align}
The result follows from \eqref{eq:Arratiabound}, \eqref{eq:R1arratiaboundf} and \eqref{eq:R2arratiaboundf}.
\end{proof}
The following lemma will be useful when studying the Poisson approximations for  $\nphi(J_n)$.
\begin{Lemma}\label{lem:arra2}
Let $H_n$ and $\{Q(k)\}_{k\geq 1}$ be as before. Then, for $n\ge 21$,
\begin{align*}
 \PP[Q(n/H_n)\mbox{ divides } H_n]\leq \frac{6.4\log\log(n)}{\log(n)}.
\end{align*}
\end{Lemma}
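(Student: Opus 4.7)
The plan is to condition on $H_n=m$ and exploit the independence of the family $\{Q(k)\}_{k\ge 1}$ from $H_n$ in order to express the probability as an explicit double sum, which I would then control via a dichotomy on the size of $\lfloor n/H_n\rfloor$ together with the classical estimates on $\pi$ from Section \ref{Sec:ReultsNT}. More precisely, since $Q(\lfloor n/m\rfloor)$ is uniform over $\{1\}\cup\Pc_{\lfloor n/m\rfloor}$ and $1$ always divides $m$, I obtain
\begin{align*}
\PP[Q(n/H_n)\text{ divides }H_n]
=\frac{1}{L_n}\sum_{m=1}^{n}\frac{1+|\{p\in\Pc_{\lfloor n/m\rfloor}:p\mid m\}|}{m(1+\pi(\lfloor n/m\rfloor))}
\le \frac{1}{L_n}\sum_{m=1}^{n}\frac{1+\omega(m)}{m(1+\pi(\lfloor n/m\rfloor))}.
\end{align*}

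Next, I would split this sum at $m=\lceil n/17\rceil$, so that $\lfloor n/m\rfloor\ge 17$ exactly when $m\le n/17$, i.e.\ in the regime where the Rosser bound \eqref{eq:pinlowerb} applies. For $m>n/17$ the summand is at most $1/m$ and the trivial estimate $\sum_{n/17<m\le n}\tfrac{1}{m}\le \log 17 + O(1/n)$ suffices. For $m\le n/17$ I would combine $\lfloor n/m\rfloor\ge n/(2m)$ (valid for $m\le n/2$), the monotonicity $\log\lfloor n/m\rfloor\le\log(n/m)$, and \eqref{eq:pinlowerb} to get
\begin{align*}
1+\pi(\lfloor n/m\rfloor)\ge \frac{\lfloor n/m\rfloor}{\log \lfloor n/m\rfloor}\ge \frac{n}{2m\log(n/m)},
\end{align*}
converting the main contribution into $\frac{2}{nL_n}\sum_{m\le n/17}(1+\omega(m))\log(n/m)$, which I would then crudely extend to a sum over all $m\in[n]$.

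The core estimate then becomes that of $\sum_{m=1}^n(1+\omega(m))\log(n/m)$. Here Stirling gives $\sum_{m=1}^n\log(n/m)=n\log n-\log n!\le n$, and swapping the order of summation in the $\omega$ term yields
\begin{align*}
\sum_{m=1}^n\omega(m)\log(n/m)=\sum_{p\in\Pc_n}\bigl(\lfloor n/p\rfloor\log(n/p)-\log\lfloor n/p\rfloor!\bigr),
\end{align*}
which together with another application of Stirling and Mertens' second formula \eqref{eq:Mertensinv} produces a bound of the form $C'n\log\log(n)$ for an explicit constant $C'$. Division by $L_n\ge \log(n)$, combined with $\log\log(n)\ge \log\log(21)>1$ for $n\ge 21$ to absorb the residual $\log 17/L_n$ term, then yields the stated $6.4\,\log\log(n)/\log(n)$ bound. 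The main obstacle will be the numerical bookkeeping needed to reach exactly the constant $6.4$: each inequality in the chain (Stirling, Rosser, Mertens, and $\lfloor n/m\rfloor\ge n/(2m)$) introduces some multiplicative slack, and the transition region where $\lfloor n/m\rfloor$ is only moderately larger than the threshold $17$ will need to be handled with care so that the aggregate constant stays under $6.4$.
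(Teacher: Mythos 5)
Your proposal takes a genuinely different route from the paper. The paper first makes a probabilistic decomposition, splitting at $H_n\le n/2$ versus $H_n> n/2$, bounds $\PP[H_n\ge n/2]$ directly from Lemma \ref{l:HnTail} (which gives roughly $2\log 2/L_n\le 1.5/\log n$), and then, after applying a lower bound for $1+\pi(x)$ valid for all $x\ge 2$, recognizes the remaining sum as $1.5\,\E[\omega(J_n)\log(n/J_n)]/L_n$. This last quantity is handled in one line by Cauchy--Schwarz, using the ready-made bounds $\E[\omega(J_n)^2]\le 5.4\log\log(n)^2$ (Lemma \ref{Lemma:variancesomega}) and $\E[\log(n/J_n)^2]\le 2$, giving a main-term constant of about $5$. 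You instead split at $m=n/17$ so as to stay in the regime where the clean Rosser bound $\pi(x)\ge x/\log x$ applies, and then estimate $\sum_m(1+\omega(m))\log(n/m)$ directly by swapping the order of summation and applying Stirling. The order of magnitude is of course the same, but your constants are harder to control: your tail contributes up to $\log 17/L_n\approx 2.83/L_n$, nearly twice the paper's $1.5/\log n$, and the Stirling route picks up additive pieces ($\pi(n)\le 1.5n/\log n$ from the outer sum, the $0.262+2/\log n$ from Mertens) that cannot be absorbed into the $\log\log n$ coefficient near $n=21$, where $\log\log n\approx 1.11$. Indeed, if you ``crudely extend'' the head sum to all $m\in[n]$ as written and apply your Stirling bound, the aggregate estimate exceeds $6.4\log\log n/\log n$ for $n$ near $21$ (the actual extended sum is fine, it is the Stirling \emph{bound} on it that is too loose there); avoiding the crude extension would require estimating $\sum_{m\le n/17}\omega(m)\log(n/m)$, which after the swap of summation gives the messier quantity $\sum_p\bigl(\lfloor n/(17p)\rfloor\log(n/p)-\log\lfloor n/(17p)\rfloor!\bigr)$. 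So the worry you flag at the end is genuine: this route almost certainly lands on a constant larger than $6.4$ unless one either (a) splits closer to $n/2$ using a lower bound for $1+\pi(x)$ that holds for all $x\ge 2$, as the paper does, or (b) checks the handful of small $n$ by direct computation. The paper's use of Cauchy--Schwarz and the precomputed $\E[\omega(J_n)^2]$ bound sidesteps all of this bookkeeping and is the cleaner path.

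One small but real inaccuracy: after the displayed chain you say ``for $m>n/17$ the summand is at most $1/m$,'' but by then you have already replaced $|\{p\in\Pc_{\lfloor n/m\rfloor}:p\mid m\}|$ by $\omega(m)$, and $\frac{1+\omega(m)}{1+\pi(\lfloor n/m\rfloor)}$ is \emph{not} $\le 1$ for $m$ near $n$ (there $\pi(\lfloor n/m\rfloor)=0$ but $\omega(m)\ge 1$). You must bound the tail using the original numerator, i.e.\ $1+|\{p\in\Pc_{\lfloor n/m\rfloor}:p\mid m\}|\le 1+\pi(\lfloor n/m\rfloor)$, which does give $1/m$. Make sure the $\omega(m)$ replacement is only applied to the head region $m\le n/17$.
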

\begin{proof}
Define $T_{n}:=\PP[Q(n/H_n)\mbox{ divides } H_n]$, and let $\mathcal{P}_{m}^{*}$ be given as in \eqref{eq:Pckstardef}.
Recall that $\dtv(X,X')\le \PP[X\neq X']$ for any coupling of $(X,X')$ defined on $(\Omega,\Fc,\Pb)$. As a consequence,
\begin{align*}
&\dtv(\omega(H_nQ(n/H_n)),  \omega(H_n)+1)\le T_{n}\\
  &\le \PP[Q(n/H_n)\mbox{ divides } H_n,H_n\leq n/2]+\PP[H_{n}\geq n/2].
\end{align*}
The second term is bounded by $\frac{1.5}{\log(n)}$ due to \eqref{ineq:Hntailestimate} and the condition $n\geq 21$. Consequently,
\begin{align*}
T_{n}
  &\le \frac{1.5}{\log(n)}  +  \frac{1}{L_n} \sum_{k=1}^{n/2}   \frac{1}{k(1+ \pi(\lfloor n/k\rfloor))}\sum_{p\in\Pc_{\lfloor n/k\rfloor}^*}\Indi{p|k}\\
  &\le \frac{1.5}{\log(n)}  +\frac{1}{L_n} \sum_{k=1}^{n/2}   \frac{1}{k(1+ \pi(\lfloor n/k\rfloor))}\big(1+\sum_{p\in\Pc_{k} }\Indi{p|k}\big).
\end{align*}

Thanks to \eqref{eq:pinlowerb}, for all $x\ge 2$, we have $\pi(x)\ge 0.67 x/\log(x)$. Hence,
\begin{align*}
T_{n}
  &\le \frac{1.5}{\log(n)}  +\frac{1.5}{ n L_n} \sum_{k=1}^{n} \sum_{p\in\Pc_k} \Indi{p|k} \log(n/k)
	\le \frac{1.5}{\log(n)}  +\frac{1.5}{L_n}\E[\omega(J_n)\log(n/J_n)].
\end{align*}
By an integral comparison, we can easily show that
\begin{align*}
\E[\log(n/J_n)^2]
  &\leq\frac{1}{n}\sum_{m=1}^n\log(n/m)^2
	 \leq\int_0^{1}\log(1/x)^2dx=2.
\end{align*}
Combining this inequality with \eqref{eq:Eomegasqu} and Cauchy-Schwarz inequality, we thus get
\begin{align*}
T_{n}
  &\le \frac{1.5}{\log(n)}  +\frac{5\log\log(n)}{L_n}\leq \frac{6.4\log\log(n)}{\log(n)},
\end{align*}
where in the last inequality we used the condition $n\geq 21$.
\end{proof}

\subsection{Stein's method for normal approximation}
The so-called ``Stein's method'' is a collection of probabilistic techniques that allow to asses the distance between to probability distributions by means of differential operators. It was first introduced in the pathbreaking paper \cite{Stein} by Charles Stein, for obtaining Gaussian approximations. 

The basic idea for Stein's method for Gaussian approximations consists on noticing that if $\Gauss$ is a random variable with standard Gaussian distribution and $f:\R\rightarrow\R$ is an absolutely continuous function satisfying $\E[|f^{\prime}(\Gauss)|]<\infty$, then $\E[\mathcal{A}[f](\Gauss)]=0$, where $\St$ is the so called ``Stein's characterizing operator'', which is defined over the set of differentiable functions, and maps $f$ to $\St[f]$, where $\mathcal{A}[f](x):=f^{\prime}(x)-xf(x)$. Then, at a heuristic level, if $F$ is a random variable with the property that $\E[\St[f](F)]$ is close to zero for a large class of absolutely continuous functions $f$, then $F$ has be close (in some meaningful probabilistic sense), to $\Gauss$.\\

\noindent This heuristics can be formalized quite beautifully by considering a test function $h:\R\rightarrow\R$, and solving for $f$, in Stein's equation
\begin{align}\label{eq:steineq}
\St[f](x)
  &=h(x)-\E[h(\Gauss)].
\end{align}
This way, if $f_h$ denotes  the solution of \eqref{eq:steineq}, then for every family of functions $\mathcal{K}$ satisfying the property that \eqref{eq:steineq} has a  solution $f_h$ for all $h\in\mathcal{K}$, then
\begin{align}\label{eq:Steinbound}
d_{\mathcal{K}}(F,\Gauss)
  &:=\sup_{h\in\mathcal{K}}|\E[h(F)]-\E[h(\Gauss)]|
  =\sup_{h\in\mathcal{K}}|\EE \St[f_h](F)|.
\end{align}
Naturally, in order to find sharp bounds for the right hand side of \eqref{eq:Steinbound}, we need knowledge on the regularity properties of the solutions $f_{h}$, for $h\in\mathcal{K}$. The next Lemmas provide some of these properties for the case where $\Kc=\{I(-\infty,z)\ ;\ z\in\R\}$ and the case where $\Kc$ is the class of Lipschitz functions with Lipschitz constant at most $1$.

\begin{Lemma}{\cite[Lemma 2.3]{ChGoSh}}\label{lem:steinsol}
If $h_z=\Indi{-\infty,z}$ for some $z\in\R$, then \eqref{eq:steineq} has a solution $f_{z}$ satisfying
\begin{align*}
\sup_{x\in\R}|f_z(x)|
  \leq\frac{\sqrt{2\pi}}{4}\ \ \ \ \ \text{ and }\ \ \ \ \ \sup_{x\in\R}|f_z^{\prime}(x)|\leq 1
\end{align*}
and for any $u,v,w\in \RR$,
\begin{align}\label{eq:diffsteinsolbound}
|(w+u) f_z(w+u) - (w+v)f_z(w+v)|
  &\le (|u|+|v|)(|w|+\frac{\sqrt{2\pi}}{4}).
\end{align}
Moreover, $x\mapsto xf_z(x)$ is non-decreasing  and $|xf_z(x)|\le 1$ for all $z\in\RR$
\end{Lemma}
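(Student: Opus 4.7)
The plan is to produce $f_z$ as the explicit solution of the first-order linear ODE
\begin{equation*}
f_z'(x) - x f_z(x) = \mathbbm{1}_{(-\infty,z]}(x) - \Phi(z),
\end{equation*}
where $\Phi$ denotes the standard normal CDF. Using the integrating factor $e^{-x^2/2}$, the unique bounded solution is
\begin{equation*}
f_z(x) = e^{x^2/2}\int_{-\infty}^x \bigl(\mathbbm{1}_{(-\infty,z]}(t) - \Phi(z)\bigr)\, e^{-t^2/2}\, dt,
\end{equation*}
which simplifies to $f_z(x) = \sqrt{2\pi}\,e^{x^2/2}\Phi(x)(1-\Phi(z))$ when $x\le z$ and to $f_z(x) = \sqrt{2\pi}\,e^{x^2/2}\Phi(z)(1-\Phi(x))$ when $x\ge z$. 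All of the claimed estimates will be extracted from this closed form.

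First, I would establish the uniform bound $\sup_x|f_z(x)| \le \sqrt{2\pi}/4$. An elementary calculation using the explicit formula shows that $|f_z|$ is maximized at $x=z$, where $|f_z(z)| = \sqrt{2\pi}\,e^{z^2/2}\Phi(z)(1-\Phi(z))$; optimizing this quantity over $z\in\mathbb{R}$ via Mill's ratio estimates yields the sharp bound $\sqrt{2\pi}/4$. For the derivative bound $|f_z'(x)|\le 1$, I would use the Stein equation itself: $f_z'(x) = \mathbbm{1}_{(-\infty,z]}(x) - \Phi(z) + xf_z(x)$, and split by the sign of $x-z$. In each case, pairing $\mathbbm{1}_{(-\infty,z]}(x) - \Phi(z)$ with the matching piece of $xf_z(x)$ produces a telescoping cancellation controlled by the Gaussian tail, yielding the unit bound.

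The monotonicity of $g(x):=xf_z(x)$ and the inequality $|g(x)|\le 1$ can both be verified from the closed form. Differentiation gives $g'(x) = f_z(x) + xf_z'(x)$; plugging in the ODE one obtains $g'(x) = \mathbbm{1}_{(-\infty,z]}(x) - \Phi(z) + (1+x^2)f_z(x)$. On each piece, this reduces to a positive quantity after invoking the classical Mill's ratio inequality $\sqrt{2\pi}\,|x|\,e^{x^2/2}(1-\Phi(|x|)) \le 1$. The same inequality combined with monotonicity and the limiting values $g(\pm\infty)=\Phi(z)-1$ and $g(+\infty) = \Phi(z)$ yields $|g(x)|\le 1$.

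Finally, for \eqref{eq:diffsteinsolbound}, I would exploit the algebraic decomposition
\begin{equation*}
(w+u)f_z(w+u) - (w+v)f_z(w+v) = w\bigl[f_z(w+u)-f_z(w+v)\bigr] + u f_z(w+u) - v f_z(w+v).
\end{equation*}
Applying $\sup|f_z'|\le 1$ on the first bracket gives $|w(f_z(w+u)-f_z(w+v))| \le |w|\cdot|u-v|\le |w|(|u|+|v|)$, while $|f_z|\le \sqrt{2\pi}/4$ bounds the remaining two terms by $(|u|+|v|)\sqrt{2\pi}/4$; summing gives the desired inequality. The hardest step will be the sharp uniform bound $\sqrt{2\pi}/4$ and simultaneously verifying $|xf_z(x)|\le 1$ with monotonicity: both require careful Gaussian tail estimates (Mill's ratio) and optimizations that, while classical, are the nontrivial analytic content of the lemma. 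The rest is a routine combination of the explicit formula, the ODE, and these analytic inputs.
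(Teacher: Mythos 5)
The paper does not prove this lemma; it cites it directly as \cite[Lemma 2.3]{ChGoSh}. Your proposal reconstructs the standard proof from the explicit formula $f_z(x)=\sqrt{2\pi}\,e^{x^2/2}\Phi(x\wedge z)\bigl(1-\Phi(x\vee z)\bigr)$, and your treatment of \eqref{eq:diffsteinsolbound} via the decomposition
\begin{equation*}
(w+u)f_z(w+u)-(w+v)f_z(w+v)=w\bigl[f_z(w+u)-f_z(w+v)\bigr]+u f_z(w+u)-v f_z(w+v)
\end{equation*}
together with $\|f_z'\|_\infty\le 1$ and $\|f_z\|_\infty\le\sqrt{2\pi}/4$ is exactly right and matches how one derives that bound from the first two estimates. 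That part needs no revision.

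There are two slips in the monotonicity argument. First, a computational one: from $g(x)=xf_z(x)$ and the ODE $f_z'(x)=xf_z(x)+\mathbbm{1}_{(-\infty,z]}(x)-\Phi(z)$, one gets
\begin{equation*}
g'(x)=f_z(x)+xf_z'(x)=(1+x^2)f_z(x)+x\bigl(\mathbbm{1}_{(-\infty,z]}(x)-\Phi(z)\bigr),
\end{equation*}
whereas your proposal drops the factor $x$ multiplying the jump term. Second, and more substantively, the Mill's ratio estimate you invoke, $\sqrt{2\pi}\,|x|\,e^{x^2/2}\bigl(1-\Phi(|x|)\bigr)\le 1$, is an upper bound on the tail and points the wrong way here. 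What actually makes $g'(x)\ge 0$ on the problematic ranges (say $x>z$, $x>0$) is the complementary lower bound
\begin{equation*}
\sqrt{2\pi}\,(1+x^2)\,e^{x^2/2}\bigl(1-\Phi(x)\bigr)\ge x, \qquad x>0,
\end{equation*}
which is exactly the amount needed to cancel the term $-x\Phi(z)$ against $(1+x^2)f_z(x)$. (The upper Mill's bound you quoted is, however, the right tool for $|xf_z(x)|\le 1$ directly; alternatively, once monotonicity is established, that bound follows for free from the limits $g(-\infty)=\Phi(z)-1$ and $g(+\infty)=\Phi(z)$, as you note.) With these two corrections the argument is complete and is the standard one found in Chen--Goldstein--Shao.
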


\begin{Lemma}{\cite[Lemma 2.4]{ChGoSh}}\label{lem:steinsol_W}
For each $h$ Lipschitz continuous with Lipschitz constant at most $1$, the equation \eqref{eq:steineq} has a solution $f_h$ satisfying
\begin{align*}
\sup_{x\in\RR} |f_h(x)|\le 2, \ \ \ \  \sup_{x\in\RR}|f'_h(x)| \le \sqrt{2/\pi} \ \ \ \ \mbox{ and } \ \ \ \ \sup_{x\in\RR} |f''_h(x)|\le 2.
\end{align*}
\end{Lemma}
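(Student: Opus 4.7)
The plan is to exhibit the solution $f_h$ of \eqref{eq:steineq} explicitly via the integrating-factor construction and then extract each of the three sup-norm bounds from elementary Gaussian calculus, exploiting the $1$-Lipschitz hypothesis on $h$ at every step. Introduce the standard normal density $\varphi(x) = (2\pi)^{-1/2} e^{-x^2/2}$ and set $\tilde{h}(y) := h(y) - \EE[h(\Gauss)]$, which integrates to zero against $\varphi$. The candidate solution is
\begin{align*}
f_h(x) = \frac{1}{\varphi(x)} \int_{-\infty}^{x} \tilde{h}(y)\,\varphi(y)\, dy = -\frac{1}{\varphi(x)} \int_{x}^{\infty} \tilde{h}(y)\,\varphi(y)\, dy,
\end{align*}
and a direct differentiation verifies that it satisfies \eqref{eq:steineq}. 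The two representations agree by the mean-zero condition.

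For the $L^\infty$ bound, use the $1$-Lipschitz property in the form $|\tilde{h}(y)| = |\EE[h(y) - h(\Gauss)]| \le \EE|y - \Gauss|$, substitute this into the integral representation, and combine with the Mills-ratio estimate $\overline{\Phi}(x) \le \varphi(x)/x$ for $x > 0$ (and its symmetric counterpart for $x < 0$) to get $\|f_h\|_\infty \le 2$. For the first-derivative bound, differentiate \eqref{eq:steineq} to obtain $f'_h(x) = x f_h(x) + \tilde{h}(x)$, then use Stein's equation to rewrite the right-hand side as an integral of the \emph{differences} $h(y) - h(x)$ against the Gaussian kernel; the Lipschitz bound on $h$ then yields $\|f'_h\|_\infty \le \EE|\Gauss| = \sqrt{2/\pi}$.

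The hardest step is the second-derivative bound, because differentiating \eqref{eq:steineq} once more produces $f''_h = h' + f_h + x f'_h$ with $|h'|\le 1$ almost everywhere by Rademacher's theorem, but the term $x f'_h(x)$ is not bounded in $x$. The fix is to rewrite $f_h$ through the Ornstein-Uhlenbeck semigroup: after an integration by parts in the Mehler formula one obtains
\begin{align*}
f_h(x) = -\int_0^\infty e^{-t}\,\EE\!\left[h'\!\left(e^{-t} x + \sqrt{1 - e^{-2t}}\,\Gauss\right)\right] dt.
\end{align*}
Differentiating twice under the integral and transferring both $x$-derivatives onto the Gaussian density of $\Gauss$ via integration by parts (to avoid needing $h''$), the resulting integrand is bounded in absolute value by $\|h'\|_\infty = 1$ times an explicit Gaussian integral that evaluates to at most $2$. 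This semigroup representation, rather than any pointwise manipulation of the original integral form of $f_h$, is the main technical obstacle; without it the linearly growing term $x f'_h(x)$ cannot be absorbed.
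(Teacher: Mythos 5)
The paper does not prove this lemma; it simply cites \cite[Lemma~2.4]{ChGoSh}, so your proposal is being compared against that reference's proof rather than anything in the manuscript.

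Your explicit integrating-factor formula is correct, and the Ornstein--Uhlenbeck representation you write down, $f_h(x)=-\int_0^\infty e^{-t}\,\EE[h'(e^{-t}x+\sqrt{1-e^{-2t}}\,\Gauss)]\,dt$, is also correct (it is $g'$ where $g=-\int_0^\infty(P_th-\EE h(\Gauss))\,dt$ solves $Lg=\tilde h$ for the OU generator $L$, and $f_h=g'$). A single Gaussian integration by parts then gives $f_h'(x)=-\int_0^\infty\frac{e^{-2t}}{\sqrt{1-e^{-2t}}}\EE[\Gauss\,h'(\cdot)]\,dt$, and the change of variables $u=e^{-2t}$ yields $\int_0^\infty\frac{e^{-2t}}{\sqrt{1-e^{-2t}}}\,dt=1$, so the bound $\sup|f_h'|\le\EE|\Gauss|=\sqrt{2/\pi}$ follows cleanly; this step is fine. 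The Mills-ratio argument for $\sup|f_h|\le 2$ also goes through (with constant $1+1/\sqrt{2}$ in fact), although \cite{ChGoSh} obtain the stronger $\sup|f_h|\le\|h'\|_\infty$ by writing $f_h(x)=\int h'(u)K(x,u)\,du$ for an explicit non-positive kernel and checking $\int|K(x,u)|\,du=1$.

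The genuine gap is in the second-derivative bound. Transferring \emph{both} $x$-derivatives onto the Gaussian density via Stein's lemma $\EE[\Gauss g(\Gauss)]=\EE[g'(\Gauss)]$ gives
\begin{align*}
f_h''(x)=-\int_0^\infty\frac{e^{-3t}}{1-e^{-2t}}\,\EE\bigl[(\Gauss^2-1)\,h'(e^{-t}x+\sqrt{1-e^{-2t}}\,\Gauss)\bigr]\,dt,
\end{align*}
and the estimate you propose, namely $|\EE[(\Gauss^2-1)h'(\cdot)]|\le\|h'\|_\infty\,\EE|\Gauss^2-1|$, discards the crucial cancellation $\EE[\Gauss^2-1]=0$ that makes the $t$-integral convergent. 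With only the sup-norm bound on $h'$, the remaining factor $\int_0^\infty\frac{e^{-3t}}{1-e^{-2t}}\,dt$ has a non-integrable $\sim 1/(2t)$ singularity at $t=0$, so the claimed ``explicit Gaussian integral that evaluates to at most $2$'' does not exist. Recovering the cancellation would require $h'$ to have some modulus of continuity (e.g. $h'$ H\"older), which Lipschitz $h$ does not give. The reference's actual proof avoids the semigroup altogether: it uses the kernel representation $f_h(x)=\int h'(u)K(x,u)\,du$ together with the identity $f_h''=h'+f_h+xf_h'$ from Stein's equation, and bounds $\int\bigl|(1+x^2)K(x,u)+x\bigl(\Phi(u)\Indi{u<x}-(1-\Phi(u))\Indi{u>x}\bigr)\bigr|\,du$ uniformly in $x$ using Mills-ratio estimates. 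You should replace your step for $\sup|f_h''|$ with an argument of this form.
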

\subsection{Stein's method for Poisson approximation}\label{sec:SteinPoisson}
The aforementioned ideas can as well be applied in the context where the target distribution is a Poisson random variable. The first work in this direction is the paper \cite{Chen}
by Chen, where the methodology was introduced and applied in the context of sums of independent, non-necessarily identically distributed
Bernoulli random variables.  A classic reference on Poisson approximation by Stein's method is the book \cite{BaHolst} by Barbour et al.
(the reader is as well referred to the more recent references \cite{ChatSoDi} and \cite{Erha}).
For approximations towards a Poisson random variable $M$
with parameter $\la$, the corresponding Stein operator becomes $\Stp_{\la}[f](x):=\la f(x+1)-xf(x)$,
and the associated Stein's equation is
\begin{align}\label{eq:steinpoissoneq}
\Stp_{\la}[f](x)
  &=h(x)-\E[h(M)].
\end{align}
The idea for obtaining bounds for $\dtv(X,M)$, where $X$ is a random variable supported in the non-negative integers, consists in considering
a test function $h(k)=\Indi{k\in B}$, and estimating $|\E[h(X)-h(M)]|$ with bounds for $|\E[\Stp_{\la}[X]]|$,
where $f$ is the solution of \eqref{eq:steinpoissoneq}. This task is achieved by means of the following result.
\begin{Lemma}{\cite[page]{BaHolst}}\label{lem:steinsolPo}
If $h(k)=\Indi{k\in B}$ for some $B\subset\NN_0$, then \eqref{eq:steinpoissoneq} has a  solution $f_{h}$ satisfying
\begin{align*}
\sup_{x\in\N_0}|f_h(x)|
  \leq 1\wedge\lambda^{-\frac{1}{2}}\ \ \ \ \ \text{ and }\ \ \ \ \ \sup_{x\in\N_0}|f_h(x+1)-f_h(x)|\leq (1-e^{-\lambda})\lambda^{-1}.
\end{align*}
\end{Lemma}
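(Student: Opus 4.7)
The plan is to solve the Stein recursion explicitly and then extract both uniform bounds from the resulting formula. Multiplying the equation $\la f(x+1)-xf(x) = h(x)-\E h(M)$ by the Poisson mass $\pi_\la(x):= e^{-\la}\la^x/x!$ and invoking $\la\pi_\la(x)=(x+1)\pi_\la(x+1)$ turns the left-hand side into the forward difference of $xf(x)\pi_\la(x)$. Telescoping from $0$ to $u-1$, with boundary term vanishing because $0\cdot f(0)\cdot\pi_\la(0)=0$, yields for $u\geq 1$
\begin{equation*}
f_h(u) = \frac{(u-1)!}{\la^u e^{-\la}}\sum_{j=0}^{u-1}\bigl(h(j)-\E h(M)\bigr)\pi_\la(j),
\end{equation*}
and since the full sum over $j\geq 0$ vanishes, this equals $-\frac{(u-1)!}{\la^u e^{-\la}}\sum_{j\geq u}(h(j)-\E h(M))\pi_\la(j)$. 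This symmetry between partial sums on $\{0,\dots,u-1\}$ and $\{u,u+1,\dots\}$ will be the main workhorse below.

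For the sup-norm bound with $h=\Indi{\cdot\in B}$, I would decompose $B$ into singletons and write $f_k$ for the solution associated to $\Indi{\cdot=k}$. A direct sign analysis of the closed form shows that $f_k\geq 0$ on $\{1,\dots,k\}$ and $f_k\leq 0$ on $\{k+1,k+2,\dots\}$, so when we sum over $k\in B$ the contributions are constructive on each half. Consequently
\begin{equation*}
|f_h(u)| \leq \frac{(u-1)!}{\la^u e^{-\la}}\min\bigl(\PP(M<u),\PP(M\geq u)\bigr).
\end{equation*}
The trivial estimate $|f_h|\leq 1$ then follows from $\min(\PP(M<u),\PP(M\geq u))\leq 1$ together with the identity $u\pi_\la(u)=\la\pi_\la(u-1)$. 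The refined bound $|f_h|\leq \la^{-1/2}$ is the nontrivial part: it reduces to showing $\sup_u(u\pi_\la(u))^{-1}\min(\PP(M<u),\PP(M\geq u)) = O(\la^{-1/2})$, which one recognizes as a Poisson local central limit statement at the mode $u\approx\la$ and proves via Stirling's formula.

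For the increment bound, set $\Delta f_h(x):=f_h(x+1)-f_h(x)$, and reduce once more to singletons by choosing the optimal sign set $B^+(x):=\{k:\Delta f_k(x)\geq 0\}$, so that $|\Delta f_h(x)|$ is controlled by $\Delta f_{\Indi{\cdot\in B^+(x)}}(x)$ and its sibling on the complement. Plugging the closed form into $\Delta f_h(x)$ and simplifying using $(x+1)\pi_\la(x+1)=\la\pi_\la(x)$ produces an expression whose supremum in $x$, after a short monotonicity check on Poisson tails, equals precisely $(1-e^{-\la})/\la$. The extremal set turns out to be a singleton $\{k^*\}$ depending on $\la$.

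The main difficulty lies in extracting the \emph{sharp} constants $1\wedge\la^{-1/2}$ and $(1-e^{-\la})/\la$ rather than just their correct orders. The $\la^{-1/2}$ factor in $\|f_h\|_\infty$ is invisible at first sight from the closed form; retrieving it requires the Barbour--Eagleson optimization that boils down to Stirling at the Poisson mode. The constant $(1-e^{-\la})/\la$ is similarly delicate, and its cleanest derivation is through the probabilistic interpretation of $f_h$ as the resolvent of the birth-death generator of an immigration-death (M/M/$\infty$) queue with Poisson$(\la)$ stationary distribution, whose spectral gap equals $1$ and governs precisely this decay.
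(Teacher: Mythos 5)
The paper offers no proof of this lemma — it simply cites Barbour, Holst and Janson. Your sketch reconstructs the canonical Barbour--Eagleson argument that the cited source uses (integrating factor, telescoping, closed form, reduction to singletons), so the overall skeleton matches the source rather than anything in the present paper.

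That said, the sketch has concrete gaps. The signs on $f_k$ are reversed: for $h_k=\Indi{\cdot=k}$ the closed form gives $f_k(u)=-\pi_\la(k)\PP[M<u]/(u\pi_\la(u))\le 0$ for $1\le u\le k$ and $f_k(u)=\pi_\la(k)\PP[M\ge u]/(u\pi_\la(u))\ge 0$ for $u>k$. The sign analysis, once corrected, gives the same quantity on each half of the split, namely $\PP[M<u]\,\PP[M\ge u]/(u\pi_\la(u))$ — a \emph{product}, not a minimum. Most importantly, the step ``the trivial estimate $|f_h|\le 1$ then follows from $\min(\PP[M<u],\PP[M\ge u])\le 1$ together with $u\pi_\la(u)=\la\pi_\la(u-1)$'' fails: $(u\pi_\la(u))^{-1}$ is unbounded in $u$, so replacing the numerator by $1$ yields nothing. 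You need the full product of tails — whose factors decay rapidly as $u$ moves away from $\la$ — and even then the bound $\le 1$ requires a case analysis around the mode, not a one-liner. The $\la^{-1/2}$ estimate and the increment bound $(1-e^{-\la})/\la$ are only gestured at; the Stirling heuristic for the former is in the right direction, but the M/M/$\infty$ spectral-gap interpretation of the latter is not how Barbour--Holst--Janson obtain it (they compute $\Delta f_k$ for singletons directly and use monotonicity of certain Poisson tail ratios). So the approach is the standard one, but at least one step as written would not go through, and the two sharp constants are not actually derived.
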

\subsection{Conditioned independence  of prime factorizations}
In this section we present the key ingredient for our approach, which is a result exhibiting a conditioned independence structure for
the factors of the prime factorization of the random variable $H_n$.
In Appendix \ref{sec:generalKeystep}, we will show that this type of phenomenology extends to a much more general family of probability
distributions (see Remark \ref{Remgeneralkey}).\\

\noindent Our starting point is the well-known relation of the $p$-adic valuation of $J_n$, given by $\alpha_p(J_{n})$, and geometric random variables.
Let $\{\xi_{p}\}_{p\in\Pc}$ be a family of independent geometric random variables with
\begin{align*}
\Pb[\xi_p=k]
  &=p^{-k}(1-p^{-1}).
\end{align*}
Then, for any $i\in \NN$ and $k_1,...,k_i\in\NN_0=\NN\cup\{0\}$ one has
\begin{align*}
\PP[\al_{p_1}(J_n)\ge k_1, \cdots, \al_{p_i}(J_n)\ge k_i ] \to \PP[\xi_{p_1}\ge k_1, \cdots, \xi_{p_i}\ge k_i].
\end{align*}
as $n\to \infty$, where $p_1,...,p_i$ are the first $i$ primes. To see this, one simply notices that
\begin{align*}
\bigcap_{j=1}^i\{\al_{p_j}(J_n) \ge k_j \} = \bigcap_{j=1}^i\{ p_j^{k_j} \mbox{ divides } J_n \} =\left\lbrace \prod_{j=1}^i p_j^{k_j} \mbox{ divides } J_n \right\rbrace,
\end{align*}
 then apply \eqref{eq:probdivide}. This hinges on the intuition that different $p$-adic valuations at $J_n$ become more
independent as $n$ grows to infinity.
Although the the asymptotic independence is our main guiding principle for drawing interesting probabilistic conclusions,
it is not directly applicable to obtain non-asymptotic bounds.  One of the new probabilistic inputs of this paper is the following
non-asymptotic conditioned independence structure of $\al_p(H_n)$.

\begin{Theorem}\label{thm:multiplicities}
Suppose that $n\geq 21$, and let $\{\xi_{p}\}_{p\in\Pc}$ and $L_n$ be given as before. Define the event
\begin{align}\label{eq:Andef}
\scr{A}_n
  &:=\Big\{\prod_{p\in\mathcal{P}_n}p^{\xi_{p}}\leq n\Big\},
\end{align}
as well as the random vector $\vec{C}(n):=(\alpha_{p}(H_n) ; p\in\Pc_n)$. Then
\begin{align}\label{eq:PrAnineq}
\PP[\scr{A}_n]
  &=L_n\prod_{p\le n}(1-p^{-1})\geq\frac{1}{2},
\end{align}	
and
\begin{align}\label{eq:conditionallaws}
    \Lc(\vec{C}(n))
      &=\Lc(\vec{\xi}(n)\ |\ \scr{A}_n),
\end{align}
where $\vec{\xi}(n):=(\xi_p ; p\in\Pc_n)$. In particular,
\begin{align}\label{eq:conditionallaw20}
\mathcal{L}(\nphi(H(n)))= \mathcal{L}(\sum_{p\in\mathcal{P}_n} \psi(p^{\xi_p})| \scr{A}_n).
\end{align}
\end{Theorem}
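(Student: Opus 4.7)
The plan is to match the joint probability mass function of $\vec\xi(n)$ restricted to $\scr{A}_n$ with that of $\vec C(n)$, using the fact that every $m\in[n]$ has a unique prime factorization involving only primes in $\Pc_n$. The algebraic identity will follow by bookkeeping, the numerical lower bound from Rosser--Schoenfeld, and \eqref{eq:conditionallaw20} from complete additivity plus $\psi(1)=0$.

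First I would compute, for any vector $\vec k=(k_p)_{p\in\Pc_n}\in\NN_0^{\Pc_n}$ and for $m:=\prod_{p\in\Pc_n}p^{k_p}$, using the independence of the $\xi_p$,
\begin{align*}
\PP[\vec\xi(n)=\vec k]=\prod_{p\in\Pc_n}(1-p^{-1})p^{-k_p}=\frac{1}{m}\prod_{p\in\Pc_n}(1-p^{-1}).
\end{align*}
Next, summing over $\{\vec k:\prod p^{k_p}\le n\}$ and using that the map $\vec k\mapsto m$ is a bijection between this set and $[n]$ (unique prime factorization, noting $m\le n$ forces all prime factors to lie in $\Pc_n$) gives
\begin{align*}
\PP[\scr A_n]=\Bigl(\prod_{p\in\Pc_n}(1-p^{-1})\Bigr)\sum_{m=1}^n\frac{1}{m}=L_n\prod_{p\in\Pc_n}(1-p^{-1}),
\end{align*}
which is the first claim. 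For the lower bound $\ge 1/2$ I would combine \eqref{bound:primeprod} with the elementary integral estimate $L_n\ge\log(n+1)\ge\log n$ to obtain $\PP[\scr A_n]\ge e^{-\gamma}(1-\log(n)^{-2})$, then verify that for $n\ge 21$ this is at least $1/2$ (a routine numerical check, since $e^{-\gamma}\approx 0.5615$ and $\log(21)^{-2}\approx 0.108$).

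For the conditional-law identity, I would divide the two expressions above: for any $\vec k$ with $m=\prod p^{k_p}\le n$,
\begin{align*}
\PP[\vec\xi(n)=\vec k\mid\scr A_n]=\frac{m^{-1}\prod(1-p^{-1})}{L_n\prod(1-p^{-1})}=\frac{1}{L_n m}=\PP[H_n=m]=\PP[\vec C(n)=\vec k],
\end{align*}
where the last equality again uses uniqueness of prime factorization, i.e.\ $\{\vec C(n)=\vec k\}=\{H_n=m\}$. Finally, to get \eqref{eq:conditionallaw20}, I would note that additivity of $\psi$ forces $\psi(1)=0$, so that $\psi(m)=\sum_{p\in\Pc_n}\psi(p^{\alpha_p(m)})$ for every $m\le n$; applying this pointwise to $m=H_n$ and transporting via \eqref{eq:conditionallaws} yields the stated distributional identity.

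The conceptual content is light: the only step with any subtlety is the bijection between integers in $[n]$ and admissible exponent vectors, and I expect the mildest obstacle is simply verifying the $1/2$ constant cleanly --- one has to combine the explicit Rosser--Schoenfeld estimate with the lower bound on $L_n$ and confirm the inequality at $n=21$. Everything else is formal manipulation of elementary probabilities.
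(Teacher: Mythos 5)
Your proposal is correct and follows essentially the same route as the paper's proof: both rest on the bijection between admissible exponent vectors $(c_p)_{p\in\Pc_n}$ with $\prod p^{c_p}\le n$ and the integers $[n]$, both compute the geometric product $\prod(1-p^{-1})p^{-c_p}=m^{-1}\prod(1-p^{-1})$, and both derive the $1/2$ bound from \eqref{bound:primeprod} together with $L_n\ge\log(n+1)$. The only cosmetic difference is that the paper phrases the calculation via an arbitrary bounded test function $f$ on $\NN_0^{\pi(n)}$, whereas you work directly with probability mass functions and then divide; these are equivalent. Your explicit remark that additivity forces $\psi(1)=0$, justifying $\psi(m)=\sum_{p\in\Pc_n}\psi(p^{\alpha_p(m)})$, is a small point the paper leaves implicit, and your numerical check at $n=21$ (giving roughly $0.501$) matches the tightness of the paper's constant.
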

\begin{Remark}\label{Remgeneralkey}
As one can observe from the proof that we present below, the heuristic explanation of why the above result holds,
comes from the fact that the probability mass function of the geometric distribution transforms ``products over sets of primes'' to
``sums over sets of integers'', which allows us to use the multiplicative property of characteristic functions in our advantage. One thus can naturally ask whether Theorem \ref{thm:multiplicities} can be extended to more general families of distributions.
This task can indeed be carried without difficulties, provided that we impose some multiplicativity condition on the underlying
random variable, as we explain in Appendix \ref{sec:generalKeystep}. For the purposes of this manuscript, we mostly require knowledge on $\mathcal{L}(H_n)$, so
in this section we only handle the case of the Harmonic distribution and leave its generalized version Proposition \ref{thm:multiplicities2}  as
as an available tool for future related problems.
\end{Remark}
\begin{proof}[Proof of Theorem \ref{thm:multiplicities}]
Consider a fixed vector $\vec{\lambda}=(\lambda_p ; p\in\Pc_n)\in\R^{\pi(n)}$. For a given $n\in\N$, define the set
\begin{align}\label{eq:mathKndef}
\mathcal{K}_{n}
  &:=\{(c_p;\ p\in\Pc_n)\in\N_0^{\pi(n)}\ ;\ \prod_{p\in\Pc_n}p^{c_p}\leq n\},
\end{align}
consisting of the tuples of non-negative integers $c_p,$ indexed by the primes $p$ belonging to $\Pc_n$ and satisfying the condition $\prod_{p\in\Pc_n}p^{c_p}\leq n$.

Observe that the prime factorization theorem induces a natural bijective correspondence between the sets $\mathcal{K}_{n}$ and $\{1,\dots, n\}$.
Let $f:\NN_0^{\pi(n)}\to \RR$ be bounded. The bijection allows us to write
\begin{align*}
\EE[f(\vec\xi(n))\1(\scr A_n)] &= \sum_{\vec c=(c_p;\ p\in\Pc_n)\in\mathcal{K}_n} f(\vec{c}) \  \Pb[\xi_p=c_p \text{ for all }p\in\Pc_n] \\
&=  \sum_{\vec{c}=(c_p;\ p\in\Pc_n)\in\mathcal{K}_n} f(\vec{c}) \prod_{p\in\Pc_n} (1-p^{-1}) \prod_{p\in\Pc_n} p^{-c_p}\\
&= \sum_{k=1}^n f(\al_p(k),p\in\Pc_n)  k^{-1} \prod_{p\in\Pc_n} (1-p^{-1}) \\
&= \EE[f(\al_p(H_n), p\in\Pc_n)] L_n \prod_{p\in\Pc_n} (1-p^{-1}).
\end{align*}
Letting $f(\vec{c})=1$ for all $\vec{c}\in \NN_0^{\pi(n)}$, we deduce that
\begin{equation}
\label{eq:probA_n}
\PP[\scr{A}_n]=L_n\prod_{p\le n}(1-p^{-1}),
\end{equation}
which in addition gives
\begin{align*}
    \E[f(\vec{\xi}(n)) | \scr{A}_n]
      &=\E[f(\alpha_p(H_n),p\in\Pc_n)],
\end{align*}
hence implying \eqref{eq:conditionallaws}.\\

\noindent To prove the inequality \eqref{eq:PrAnineq}, we use the identities \eqref{eq:probA_n} and \eqref{bound:primeprod}, as well as the fact that $L_{n}\geq\log(n+1)$ and $n\geq 21$, in order to obtain
\begin{align*}
\PP[\scr{A}_n]
  &	\geq e^{-\gamma}(1-\log(x)^{-2})
	\geq 0.5.
\end{align*}	

\noindent Finally, we notice that identity \eqref{eq:conditionallaw20} easily follows from \eqref{eq:conditionallaws}, since
\begin{align*}
    \mathcal{L}(\nphi(H_n))
    &=\mathcal{L}(\sum_{p\in\Pc_n}\nphi(p^{\alpha_p(H_n)}))
    =\mathcal{L}(\sum_{p\in\Pc_n}\nphi(p^{\xi_p}) | \scr{A}_n).
\end{align*}
\end{proof}

\subsection{Poisson embedding and linear approximation}\label{sec:embedding}

Another key idea for proving Theorem \ref{thm:CLTHarmonic} consists on regarding the law of $\psi(H_n)$ as the distribution
of a suitable functional of a Poisson point process. In view of Theorem \ref{thm:multiplicities}, this task can be carried simply by viewing
the $\xi_p$'s as functionals of a Poisson point process.
To achieve this, we define the (discrete) ambient space $\mathbb X := \{(p,k): p\in\mathcal P, k\in\NN_0 \}$
and consider a Poisson process $\eta$, defined on $\mathbb X$, with intensity measure $\la:\mathbb{X}\rightarrow\R_{+}$ given by
\begin{align*}
\la(p,k) = \frac{1}{kp^k}, \quad \mbox{ for all } p\in\mathcal P, k\in\NN.
\end{align*}
Using an elementary manipulation of characteristic functions, one can easily show that if $\xi$ is a   geometric random variable
satisfying $\PP[\xi= k]=(1-\rho)\rho^k$ for $k\in\NN_0$ and $\rho\in(0,1)$, then
\begin{align*}
\xi \overset{Law}=  \sum_{k\ge 1} k M_\rho(k)
\end{align*}
where the random variables $M_\rho(k)$ indexed by $\NN$ are independent Poisson with parameters $\frac{\rho^k}{k}$, respectively.
Applying this result to the variables $\xi_{p},$ we obtain the identity in law
\begin{align}\label{eq:xipstochint}
(\xi_p, p\in\mathcal P)
  \stackrel{Law}{=}\Big(\sum_{k\in\NN} k \eta(p,k), p\in\mathcal P\Big).
\end{align}
Taking  \eqref{eq:xipstochint} into consideration, we will assume in the sequel that
\begin{align}\label{eq:xipfuncrep}
\xi_p
  &:=\sum_{k\in\NN} k \eta(p,k)
\end{align}
for every $p\in\Pc$. Observe that the additivity of $\nphi$ implies that
\begin{align}\label{eq:sumppsi}
\nphi(\prod_{p\in\mathcal P_n} p^{\xi_p}) = \sum_{p\in\mathcal P_n} \nphi(p^{\xi_p}),
\end{align}
which induces a natural dependence of the law of $\psi(H_n)$ on the underlying Poisson process $\eta$ via Theorem \ref{thm:multiplicities}. However, for
computational simplicity, we will instead use the identity $\mathbbm{1}(\xi_p=1) = \xi_p - \xi_p \mathbbm{1}(\xi_p\ge 2)$ to write
\begin{align}\label{eq:YnRndecomp}
\nphi(\prod_{p\in\mathcal P_n} p^{\xi_p})
  &
	= Y_n+R_n,
\end{align}
where
\begin{align}\label{eq:YnRndecompq}
Y_n
  :=\sum_{p\in\mathcal P_n}\nphi(p)\xi_p\ \ \ \ \ \ \ \ \text{ and }\ \ \ \ \ \ \ \
R_n
  :=\sum_{p\in\mathcal P_n}(\nphi(p^{\xi_p})-\nphi(p) \xi_p) \mathbbm{1}(\xi_p\ge 2),
\end{align}
The decomposition \eqref{eq:YnRndecomp} will be of great help for future computations, due to the fact that
$Y_n$ has compound Poisson distribution, and $R_n$ is an error satisfying
\begin{align}\label{eq:Rnmainbound}
\E[|R_n|]
  &\leq c_1 + 2c_2,
\end{align}
due to Lemma \ref{l:reduction2} in the Appendix.

\subsection{Integration by parts for linear functionals of $\eta$}
As it is usually the case in Stein's method, a suitable integration by parts
greatly simplifies computations.
This task can be addressed by using the Poisson integral structure of $Y_n$ and
the following integration by parts formula, obtained as an easy consequence of Mecke's formula (or from palm theory for Poisson processes). In what follows, we write $\mu(\rho):= \int_{\mathbb X} \rho(x)\mu(dx)$ for any positive measure $\mu$ on $\mathbb X$ and function $\rho:\mathbb X\to \RR$.

\begin{Lemma}\label{lem:tildegIBP}
Write $\wt\eta(\rho)=\eta(\rho) - \la(\rho)$ for the compensated Poisson integral of a kernel function
$\rho\in L^1(\mathbb{X},d\la)\cap L^2(\mathbb{X},d\la)$. Let $G:=G(\eta)$ be square-integrable $\sigma(\eta)$-measurable.  Then
\begin{align}\label{eq:tildegIBP}
\EE[ \wt\eta(\rho) G(\eta) ] = \int_{\X} \rho(x) \EE[D_x G(\eta) ] \la(dx),
\end{align}
where $D_x G(\eta):= G(\eta+\de_x) -G(\eta)$.
\end{Lemma}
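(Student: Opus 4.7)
The plan is to derive the identity as a direct consequence of Mecke's formula, which in this discrete setting reads
\begin{align*}
\EE\Big[\sum_{x\in\X} f(x,\eta)\, \eta(\{x\})\Big] = \int_{\X} \EE[f(x, \eta+\delta_x)]\, \la(dx),
\end{align*}
valid for any measurable $f:\X\times \mathbf{N}_\sigma(\X)\to [0,\infty]$, where $\mathbf{N}_\sigma(\X)$ denotes the space of $\sigma$-finite integer-valued measures on $\X$. I will first verify the identity under the additional assumption that $\rho \geq 0$ and $G \geq 0$ is bounded, and then remove these restrictions by standard decomposition arguments.

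Applying Mecke's formula to the choice $f(x,\eta) := \rho(x) G(\eta)$, one obtains
\begin{align*}
\EE[\eta(\rho)\, G(\eta)] = \EE\Big[\sum_{x\in\X} \rho(x) G(\eta)\, \eta(\{x\})\Big] = \int_{\X} \rho(x)\, \EE[G(\eta+\delta_x)]\, \la(dx).
\end{align*}
Separately, Fubini's theorem together with $\rho\in L^1(\la)$ and the boundedness of $G$ gives
\begin{align*}
\EE[\la(\rho) G(\eta)] = \la(\rho)\, \EE[G(\eta)] = \int_{\X} \rho(x)\, \EE[G(\eta)]\, \la(dx).
\end{align*}
Subtracting the second identity from the first yields exactly \eqref{eq:tildegIBP} for the restricted class.

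To remove the sign restriction on $\rho$, write $\rho = \rho_+ - \rho_-$ and note that $\rho_\pm \in L^1(\la)\cap L^2(\la)$, so the identity extends by linearity provided each piece is finite. To remove the boundedness restriction on $G$, approximate $G$ by the truncated sequence $G_M := (-M)\vee G\wedge M$ and pass to the limit as $M\to\infty$: the left-hand side converges by Cauchy--Schwarz using $\tilde\eta(\rho)\in L^2(\PP)$ (which follows from $\rho\in L^2(\la)$ and the Poisson isometry $\EE[\tilde\eta(\rho)^2]=\la(\rho^2)$) together with $G\in L^2(\PP)$; the right-hand side converges by dominated convergence after the bound $|\EE[D_x G_M(\eta)]|\le \EE[|D_x G(\eta)|]$ and Mecke's formula applied to $|D_x G(\eta)|$ to show integrability of the dominating function against $|\rho|\,d\la$. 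The mildly delicate point is justifying this last dominated convergence step: the key is that $\rho\in L^1(\la)\cap L^2(\la)$ combined with square-integrability of $G$ ensures that $\int_{\X}|\rho(x)|\,\EE[|D_x G(\eta)|]\,\la(dx)<\infty$, again by Cauchy--Schwarz and Mecke. With these estimates in place, the identity \eqref{eq:tildegIBP} follows in full generality.
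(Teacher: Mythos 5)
Your proof is correct and follows essentially the same route as the paper's: apply Mecke's formula to pass from $\EE[\eta(\rho)G(\eta)]$ to $\int_{\X}\rho(x)\,\EE[G(\eta+\de_x)]\,\la(dx)$, then subtract the compensator term $\EE[\la(\rho)G(\eta)]$. The paper's proof is terser and omits the integrability and truncation bookkeeping you supply (nonnegative case first, sign decomposition of $\rho$, truncation of $G$ with Cauchy--Schwarz and a dominated-convergence check, using that truncation is a contraction so $|D_xG_M|\le|D_xG|$); that extra care is welcome rigor, not a different method.
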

\begin{proof}
Mecke's equation \cite[Theorem 4.1]{LP}  states that for $\rho$ and $G$ as above,
\begin{align*}
\EE[ \eta(\rho) G(\eta) ]  =  \EE\Big[ \int_{\X}  \rho(x) G(\eta) \eta(dx)\Big] = \int_{\X} \EE[ \rho(x) G(\eta+\de_x) ] \la(dx).
\end{align*}
Subtracting from both sides $\EE[\la(\rho) G(\eta)]$, one arrives at
\begin{align*}
\EE[\wt \eta(\rho) G(\eta) ]  = \int_{\X} \EE[ \rho(x)(G(\eta+\de_x) -G(\eta))]\la(dx) = \int_{\X} \rho(x) \EE[D_x G(\eta) ] \la(dx),
\end{align*}
as required.
\end{proof}

\begin{Remark}
\
\begin{enumerate}
\item[(i)]
The lemma above is in fact a duality formula: the compensated Poisson measure applied to a
deterministic function is the dual operation of the difference operator $D$, which is customarily called the Kabanov-Skorohod integral. We stress that duality (or  integration by parts) formula on the Poisson space holds for more general Poisson functionals, and reduces to our case when applied to linear ones.
We refer the interested reader to the monographs \cite{NP,LP}  for more identities of this kind, and to Last, Peccati and Schulte \cite{LPS16}, D\"obler and Peccati \cite{DP18} and a recent work \cite{LrPY20+} for more striking applications of duality formulas for normal approximation on the Poisson space.
\item[(ii)] The integrability condition is satisfied automatically for each $\rho_n$ and $\varrho_n$ utilized in the proofs as they are supported on subsets of $\mathbb X$ with finite $\la$-measure.
\end{enumerate}
\end{Remark}

\section{Proof of Theorem \ref{thm:CLTHarmonic}: Wasserstein bound}\label{mainthmWboundCLT}

\noindent This section is devoted to proving equation \eqref{eq:Wasserstein}.
Recall that by Theorem \ref{thm:multiplicities}, we have $\Lc(\psi(H_n)) = \Lc(Y_n+R_n|\mathscr{A}_n)$, where $Y_n$ and $R_n$ are given by
\eqref{eq:YnRndecompq} and $\scr{A}_n$ by \eqref{eq:Andef}. Define $W_n$, $\tilde{W}_n$ by
\begin{align}\label{eq:WandWtildedef}
W_n
  :=\sigma_n^{-1}(Y_n - \mu_n )
\ \ \ \ \ \ \ \ \text{ and }\ \ \ \ \ \ \ \
\tilde{W}_n
  :=\sigma_n^{-1}(\sum_{p\in\Pc_n}\nphi(p^{\xi_p}) - \mu_n ),
\end{align}
so that  the law of the underlying approximating sequence
\begin{align}\label{eq:Zndef}
Z_n=Z_n^\psi
  &:=\sigma_n^{-1}(\psi(H_n) - \mu_n),
\end{align}
can be written as
\begin{align}\label{eq:Wtildelawdecomp}
\mathcal{L}(Z_n)
  &= \mathcal{L}(\tilde{W}_n | \scr{A}_n)
	=\mathcal{L}(W_n + \sigma_n^{-1}R_n | \scr{A}_n).
\end{align}
 Let $I_n$ denote the indicator  of $\scr{A}_n$.
From \eqref{eq:Wtildelawdecomp}, \eqref{eq:PrAnineq} and the definition of $1$-Wasserstein distance, it follows that
\begin{align*}
d_{1}(Z_n,\mathcal{L}(W_n | \scr{A}_n)) \le \sigma_n^{-1}\EE[|R_n| | \scr{A}_n] \le 2 \sigma_n^{-1}\EE[|R_n| I_n],
\end{align*}
which by the triangle inequality and \eqref{eq:Rnmainbound}   implies that
\begin{align}\label{eq:conditionedsteinprev}
d_{1}(Z_n,N)
  &\leq d_{1}(\mathcal{L}(W_n | \scr A_{n}),N)+\sigma_n^{-1}( 4c_1 + 2 c_2).
\end{align}
We thus have reduced the problem to bounding $d_{1}(\mathcal{L}(W_n | \scr A_{n}), N)$.
This task can be achieved by implementing Stein's bound \eqref{eq:Steinbound} for the conditional law $\mathcal{L}(W_n | \scr A_{n})$ and then
bounding from above the quantity
\begin{align}\label{eq:conditionedstein}
|\E[\St[f]( W_n) | \scr A_{n}]|
  &= \Pb[\scr{A}_n]^{-1}|  \EE[ f(W_n)W_n I_n ] - \EE[f'(W_n)I_n ]|\nonumber\\
	&\leq 2|  \EE[ f(W_n)W_n I_n ] - \EE[f'(W_n)I_n ]|,
\end{align}
where $f$ is the solution to Stein's equation \eqref{eq:steineq} with respect to a test
function $h\in$ Lip$(1)$ satisfying the uniform bounds in Lemma \ref{lem:steinsol_W}. \\

\noindent\textbf{Step I}\\
\noindent In order to bound \eqref{eq:conditionedstein}, we next find a suitable integration by parts for $W_n$.
Recall from Section \ref{sec:embedding} that $\xi_p=\xi_p(\eta)=\sum_{k\ge 1} k \eta(p,k)$. Thus, we can write
\begin{align*}
W_n
  &=  \frac{1}{\sigma_n} \sum_{p\in\mathcal P_n}\sum_{k\ge 1} \nphi(p)  k \Big( \eta(p,k) - \frac{1}{kp^k}\Big)=\wt\eta(\rho_n),
\end{align*}
where
\begin{align}\label{eq:rhondef}
\rho_n(k,p)
  &:=\sigma_n^{-1}k\nphi(p) \Indi{p\in\mathcal P_n}.
\end{align}
From the definition of the difference operator $D_x$, one can easily check that if $F_1$ and $F_2$ are $\sigma(\eta)$-measurable, then
\begin{align}\label{eq:DFG}
D_x (F_1F_2) &= F_1 D_x F_2 + F_2 D_x F_1 + D_x F_1 D_x F_2.
\end{align}
Thus, applying Lemma \ref{lem:tildegIBP} to $G = f(W_n) I_n$, we can write
\begin{align}\label{eq:etarhodecompMrr}
 \EE[ f(W_n)W_n I_n ]
  &=\EE[ \wt\eta(\rho_n) f(\wt\eta(\rho_n)) I_n]
  = J_n + \varepsilon_n,
\end{align}
where
\begin{align*}
J_n
  &:= \int_{\X} \rho_n(x)  \EE[ I_n D_x(f(\wt\eta(\rho_n)))] \lambda(dx) \\
\varepsilon_n
  &:= \int_{\X} \rho_n(x)  \EE[ (f(\wt\eta(\rho_n)) + D_x f(\wt\eta(\rho_n)))D_x I_n] \lambda(dx).
\end{align*}
Next we describe separately the behavior of each term on the right hand side of \eqref{eq:etarhodecompMrr}.\\

\noindent \textbf{Step II}\\
First we analyze $J_n$. To this end, we use Taylor's formula  to write
\begin{align*}
|D_x(f(\wt\eta(\rho_n))) - f'(\wt\eta(\rho_n)) \rho_n(x)|
  &= |f(\wt\eta(\rho_n)+\rho_n(x)) - f(\wt\eta(\rho_n)) -f'(\wt\eta(\rho_n)) \rho_n(x)|\\
  &\leq \|f^{\prime\prime}\|_{\infty}|\rho_n(x)|^2
	\leq \sqrt{2/\pi}|\rho_n(x)|^2,
\end{align*}
where the last inequality follows from Lemma \ref{lem:steinsol_W}. It is clear that $W_n$ is standardized by our choice of
$\mu_n$ and $\sigma_n$, and  consequently
 $\|\rho_n\|_{L^2(\mathbb{X},d\la)}=\Var[W_n]=1$ by moment formula for Poisson integrals \cite[Lemma 12.2]{LP}, yielding
\begin{align*}
|J_n-\EE[ I_n f'(W_n)]|
  &=\Big|J_n-\int_{\X} \rho_n(x)^2\EE[ I_n f'(\wt\eta(\rho_n))] \la(dx) \Big|
  \leq \sqrt{2/\pi}\int_{\X} |\rho_n(x)|^3 \la(dx).
\end{align*}
From  \eqref{eq:rhondef} and Lemma \ref{eq:Lemageometricaux}, we have
\begin{align*}
\int_{\X} |\rho_n(x)|^3 \la(dx)
  &=\frac{1}{\sigma_n^3} \sum_{p\in\mathcal P_n}\sum_{k\ge 1} \frac{|\nphi(p)|^3k^2}{p^k}
	\leq\frac{c_1}{\sigma_n^{3}}\sum_{p\in\Pc_n}\frac{|\nphi(p)|^2(1+p^{-1})}{p(1-p^{-1})^3}\\
	&=\frac{c_1}{\sigma_n^3} \sum_{p\in\mathcal P_n} \frac{|\nphi(p)|^2\text{Var}[\xi_p](1+p^{-1})}{1-p^{-1}}
	\leq 3c_1\frac{1}{\sigma_n^3} \sum_{p\in\mathcal P_n} \text{Var}[\nphi(p)\xi_p]
	=\frac{3c_1}{\sigma_n}.
\end{align*}
We thus conclude that
\begin{align}\label{eq:boundMfinal}
|J_n-\EE[ I_n f'(W_n)]| \leq \frac{3\sqrt{2/\pi}c_1}{\sigma_n}.
\end{align}

\noindent \textbf{Step III}\\
\noindent Next we handle the term $\varepsilon_n$. We have
\begin{align}\label{eq:ep_n_step3}
|\varepsilon_n|
 &\le 3 \|f\|_{\infty} \int_{\X} |\rho_{n}(x)| \EE[ | D_x I_n |] \la(dx)
 \le 6\int_{\X} |\rho_n(x)| \EE[ | D_x I_n |] \la(dx),
\end{align}
where the last inequality follows from Lemma \ref{lem:steinsol_W}.
Hence, it suffices to consider $\EE[|D_x I_n|]$. We notice that $|D_x I_n|\in \{0,1\}$, more precisely, 
setting $x=(p_0,k_0)\in \Pc_n\times \NN$, we have
\begin{align*}
D_x I_n
  &= \1( p_0^{ k_0}  \prod_{p\in\Pc_n} p^{\sum_{k=1}^\infty k\eta(p,k)}\le n ) - \1(\prod_{p\in\Pc_n} p^{\sum_{k=1}^\infty k\eta(p,k)}\le n)\\
  &= - \1(\prod_{p\in\Pc_n} p^{\xi_p}\in [p_0^{-k_0} n, n] )
\end{align*}
Hence, $D_x I_n\neq 0$ implies $I_n\neq 0$. Applying Theorem \ref{thm:multiplicities} and Lemma \ref{l:HnTail}, we have
\begin{align*}
\EE[ |D_x I_n|]
&=\EE[I_n|D_x I_n|]
=\PP[\scr A_n] \EE[ |D_x I_n| | \scr{A}_n]\\
&\leq \Pb[ \prod_{p\in\Pc_n} p^{\xi_p}\in [p_0^{-k_0} n, n]  | \scr{A}_n]= \PP[ H_n \ge n p_0^{-k_0}]
 \le \frac{2k_0 \log (p_0)}{\log(n)}.
\end{align*}
Plugging this estimate back to \eqref{eq:ep_n_step3} and applying Lemma \ref{eq:Lemageometricaux}, we obtain
\begin{align}\label{eq:epsilonnbound}
|\varepsilon_n|
  &\le  12  \sum_{p\in\mathcal P_n} \sum_{k\ge 1} \frac{1}{\sigma_n} |\nphi(p)|  \frac{\log(p)}{\log(n)} \frac{k}{p^k}\nonumber\\
  &\le  \frac{48c_1}{\sigma_n} \sum_{p\in\mathcal P_n} \frac{\log(p)}{ p\log(n)}
\le  \frac{48c_1}{\sigma_n} ,
\end{align}
where the last inequality follows from Merten's formula \eqref{eq:Mertenslog}.
Inequality \eqref{eq:Wasserstein} follows from \eqref{eq:conditionedsteinprev}, \eqref{eq:conditionedstein},
\eqref{eq:etarhodecompMrr}, \eqref{eq:boundMfinal} and \eqref{eq:epsilonnbound}.\\

\section{Proof of Theorem \ref{thm:CLTHarmonic}: Kolmogorov bound}\label{mainthmKboundCLT}

\noindent Now we proceed with the proof of \eqref{eq:Kolmogorov}.
Let $W_n,\tilde{W}_n$ and $Z_{n}$ be given as in \eqref{eq:WandWtildedef} and \eqref	{eq:Zndef}.
It is not evident that $\mathcal L(\tilde W_n | \scr{A}_n)$ and $\mathcal L(W_n|\scr{A}_n)$ are close in the Kolmogorov distance
as they are in the Wasserstein distance, so have to implement Stein's method directly for $\mathcal L(Z_n) =\mathcal L(\tilde W_n|\scr{A}_n)$. We will see, however, that the decomposition \eqref{eq:Wtildelawdecomp} is still relevant in order to apply our integration by parts formula Lemma  \ref{lem:tildegIBP}.

It suffices to bound $|\E[\St[f](Z_n)]|$, for $f$ given as the solution to the Stein's equation associated to a test function $h$ of the form $h(x):=\Indi{x\leq z}$, for $z\in\R$.
By Theorem \ref{thm:multiplicities}, we have that
\begin{align}\label{e:5.1}
|\E[\St[f](Z_n)]|
  &= \Pb[\scr{A}_n]^{-1}|  \EE[ f(\tilde{W}_n)\tilde{W}_n I_n ] - \EE[f'(\tilde{W}_n)I_n ]|\nonumber\\
	&\leq 2|  \EE[ f(\tilde{W}_n)\tilde{W}_n I_n ] - \EE[f'(\tilde{W}_n)I_n ]| \nonumber\\
	&\le \frac{\sqrt{2\pi}}{2} \sigma_n^{-1} \EE[|R_n|] + 2|  \EE[ f(\tilde{W}_n) W_n I_n ] - \EE[f'(\tilde{W}_n)I_n ]|,
\end{align}
where we have used \eqref{eq:Wtildelawdecomp} and Lemma \ref{lem:steinsol} for the last inequality. Observe that
by relation \eqref{eq:Rnmainbound},
\begin{align}\label{e:5.1p}
\frac{\sqrt{2\pi}}{2}\E[|R_{n}|]
  &\leq1.3 c_1+2.6 c_2,
\end{align}
so it suffices to estimate the second term in the right hand side of \eqref{e:5.1}. As before, we split the rest of the proof into several steps.\\

\noindent\textbf{Step I}\\
In order to handle the second term in \eqref{e:5.1}, we apply Lemma \ref{lem:tildegIBP} and \eqref{eq:DFG} to obtain
\begin{align}\label{e:5.1pp}
\EE[ W_n f(\tilde W_n) I_n]
  &= \int_{\X} \rho_n(x) \EE[ D_x(f(\tilde W_n)I_n)] \la(dx) = T_n + \ep_{n,1},
\end{align}
where
\begin{align}
T_n &:= \int_{\X} \rho_n(x) \EE[ D_x(f(\tilde W_n)) I_n] \la(dx), \label{eq:Tndefsom}\\
\ep_{n,1}&:=  \int_{\X} \rho_n(x) \EE[ (f(\tilde W_n)+ D_x(f(\tilde W_n)) D_x I_n)] \la(dx)\nonumber.
\end{align}
Notice that Lemma \ref{lem:steinsol}, together with the argument leading to \eqref{eq:epsilonnbound}, yields
\begin{align}\label{e:ep_n1final}
|\ep_{n,1}| \le 3 \|f\|_\infty \int_{\X} \rho_n(x) \EE[|D_x I_n|]\la(dx) \le     \frac{15.1c_1}{\sigma_n},
\end{align}
so that we are  left to show that $T_n$ and $\EE[f'(\tilde W_n)I_n]$ are close.\\

\noindent\textbf{Step II}\\
In view of the form of $T_n$, it helps to rewrite $\EE[f'(\tilde W_n)I_n]$ as follows
\begin{align*}
\EE[f'(\tilde W_n)I_n] = \int_{\X} \rho_n(x)  \EE[\rho_n(x) f'(\tilde W_n)I_n] \la(dx),
\end{align*}
where we used the fact that $\int_{\X} \rho_n^2 d\la=1$.
  On the other hand, by Taylor's formula,
\begin{align*}
D_x(f(\tilde W_n)) - f'(\tilde W_n) D_x \tilde W_n = D_x \tilde W_n  \int_0^1 (f'(\tilde W_n + tD_x \tilde W_n) - f'(\tilde W_n)) dt.
\end{align*}
Therefore, approximating $D_x(f(\tilde W_n))$ by $D_x\tilde W_n f'(\tilde W_n)$ in \eqref{eq:Tndefsom}, and
then $D_x \tilde W_n$ by $\rho_n(x)$, we have
\begin{align}\label{eq:Tntotarget}
T_n - \EE[f'(\tilde W_n)I_n] = \ep_{n,2} + \ep_{n,3},
\end{align}
where
\begin{align*}
\ep_{n,2}&:= \int_0^1 \int_{\X} \rho_n(x) \EE[ D_x \tilde W_n (f'(\tilde W_n + tD_x \tilde W_n) - f'(\tilde W_n))  I_n ]  \la(dx) dt, \\
\ep_{n,3}&:=  \int_{\X} \rho_n(x) \EE[ (D_x \tilde W_n - \rho_n(x)) f'(\tilde W_n) I_n ] \la(dx).
\end{align*}
To bound both terms, we have to understand $D_x \tilde W_n$. Since $\tilde W_n$ is a non-linear functional of $\eta$, the quantity $D_x \tilde W_n$ would be random, in contrast to $D_x W_n$.  Write $x=(p,k)\in \Pc_n\times \NN$, we have
\begin{align}\label{e:DxtildeW}
D_x\tilde W_n  = \tilde W_n(\eta+\de_x) - \tilde W_n(\eta)= \sigma_n^{-1}(\nphi(p^{\xi_{p}+k}) - \nphi(p^{\xi_p})).
\end{align}
Indeed, the additional Dirac mass $\de_{x}$ that is added to $\eta$ will affect only  one summand in the definition of $\tilde W_n$, more precisely, for all $q\in\Pc_n$,
\begin{align}\label{e:xi_add_one}
\xi_q(\eta+\de_x) = \sum_{j\ge 1} j\  (\eta+\de_x)(q,j) = \begin{cases}
k + \sum_{j\ge 1} j \eta(q,j)=k + \xi_p, & \mbox{ if } p=q, \\
\sum_{j\ge 1} j \eta(p,j)= \xi_p, & \mbox{ if } p\neq q.
\end{cases}
\end{align}

\noindent{\textbf{Step III}}\\
\noindent We first bound the error $\ep_{n,3}$.  By Lemma \ref{lem:steinsol} and the explicit form of $\rho_n$ and $\la$, we have
\begin{align}\label{e:ep_n3}
  |\ep_{n,3}| &\le  \int_{\X} |\rho_n(x)| \EE[| D_x \tilde W_n -\rho_n(x) |] \la(dx)  \\
   &= \frac{1}{\sigma_n^2}\sum_{p\in\Pc_n} \sum_{k\ge 1} \frac{|\psi(p)|}{p^k} \EE[|\nphi(p^{\xi_{p}+k}) - \nphi(p^{\xi_p}) - k\nphi(p)|]  \notag\\
   &\le  \frac{c_1}{\sigma_n^2} \sum_{p\in\Pc_n} \frac{1}{p} \EE[|\nphi(p^{\xi_{p}+1}) - \nphi(p^{\xi_p}) - \nphi(p)|]  \notag\\
  &\quad + \frac{c_1}{\sigma_n^2} \sum_{p\in\Pc_n} \sum_{k\ge 2} \frac{1}{p^k} \EE[|\nphi(p^{\xi_{p}+k}) - \nphi(p^{\xi_p}) - k\nphi(p)|]=: \ep_{n,3,1}+\ep_{n,3,2}.\notag
  \end{align}
  Notice that the term inside the expectation of $\ep_{n,3,1}$ vanishes under the event $\xi_p =0$. Thus, $\ep_{n,3,1} \le c_1 \sigma_n^{-2} (\de_1 + \de_2 + \de_3)$ with
\begin{align}\label{eq:delta's}
\de_1 &=  \sum_{p\in\Pc_n} \frac{1}{p} \EE[|\psi(p^{\xi_p+1})|\1(\xi_p\ge 1)],\notag \\
\de_2 &= \sum_{p\in\Pc_n} \frac{1}{p} \EE[|\psi(p^{\xi_p})|\1(\xi_p\ge 1)], \\
\de_3&= \sum_{p\in\Pc_n} \frac{|\psi(p)|}{p} \PP[\xi_p\ge 1]. \notag
\end{align}
 By the memoryless property  $\mathcal L(\xi_p | \xi_p\ge k) = \mathcal L(\xi_p + k)$ for all $p\in\Pc$ and $k\in\NN$, as well as the Cauchy-Schwarz inequality,  we have
\begin{align}\label{eq:delta1finalb}
\de_1 = \sum_{p\in\Pc_n} \frac{\EE[|\psi(p^{\xi_p+2})|]}{p^2} \le \zetab(2)^{1/2} c_2.
\end{align}
The same argument leads to
\begin{align}\label{eq:delta2finalb}
\de_2
  &=   \sum_{p\in\Pc_n} \frac{\EE[|\psi(p^{\xi_p+1})|]}{p^2}
	\le \zetab(2)c_1 + \sum_{p\in\Pc_n} \frac{\EE[|\psi(p^{\xi_p+1})|\1(\xi_p\ge 1)]}{p^2}\nonumber\\
	&\le \zetab(2)c_1 + \sum_{p\in\Pc_n} \frac{\EE[|\psi(p^{\xi_p+2})]}{p^3}
	\leq\zetab(2) c_1 + \zetab(4)^{1/2} c_2,
 \end{align}
and $\de_3\le \zetab(2) c_1$, yielding
\begin{align}\label{e:ep_n31}
\ep_{n,3,1}
  &\le  \frac{1}{\sigma_n^2} \big(2\zetab(2) c_1^2+(\zetab(2)^{\frac{1}{2}} + \zetab(4)^{\frac{1}{2}})c_1c_2\big)
	\leq \frac{1}{\sigma_n^2} \big(1.3c_1^2+1.1c_1c_2\big).
\end{align}
To bound $\ep_{n,3,2}$, we write $\ep_{n,3,2}\le c_1\sigma_n^{-2}(\de'_1 + \de_2' +\de'_3)$ where
\begin{align}\label{eq:deltaprime's}
\de'_1 & = \sum_{p\in\Pc} \sum_{k\ge 2} \frac{1}{p^k} \EE[|\nphi(p^{\xi_{p}+k})|],\notag\\
\de'_2 &= \sum_{p\in\Pc} \sum_{k\ge 2} \frac{1}{p^k} \EE[ |\nphi(p^{\xi_p})|], \\
\de'_3&= \sum_{p\in\Pc} \sum_{k\ge 2} \frac{1}{p^k} k|\nphi(p)|.\notag
\end{align}
Notice that by the identity $\mathcal{L}(\xi_p+k)=\mathcal{L}(\xi_p\ |\ \xi_p\geq k)$,
\begin{align}\label{eq:delta1primebound}
 \de'_1
  &= \sum_{p\in\Pc} \sum_{k\ge 2} \EE[ |\psi(p^{\xi_p})|\1(\xi_p\ge k)] \leq   \sum_{p\in\Pc} \EE[|\psi(p^{\xi_p})| \xi_p \1(\xi_p\ge 2)]\nonumber\\
  &= \sum_{p\in\Pc}\frac{1}{p^2} \EE[|\psi(p^{\xi_p+2})|(\xi_p+2)] \le c_2 \Big(\sum_{p\in\Pc} \frac{\EE[(\xi_p+2)^2]}{p^2}\Big)^{1/2} \le (11 \zetab(2))^{1/2} c_2.
\end{align}
  where we used the Cauchy-Schwarz inequality and Lemma \ref{eq:Lemageometricaux} for the two inequalities.   The same argument implies
\begin{align}\label{eq:delta2primebound}
\de'_2
  &= \sum_{p\in\Pc} \frac{(1-p^{-1})^{-1}}{p^2} \EE[|\psi(p^{\xi_p})|(\Indi{\xi_p=1}+\Indi{\xi_p\geq 2})]\nonumber\\
	&\le  \zetab(3) c_1 + 2\sum_{p\in\Pc} \frac{\EE[|\psi(p^{\xi})|\1(\xi_p\ge 2)]}{p^2}\nonumber\\
  &=  \zetab(3) c_1  + 2 \sum_{p\in\Pc} \frac{\EE[|\psi(p^{\xi_p+2})|]}{p^4}
	\le  \zetab(3) c_1 + 2\zetab(6)^{1/2} c_2,
\end{align}
and $\de'_3\le 6 \zetab(2)c_1 $, yielding
\begin{align}\label{e:ep_n32}
\ep_{n,3,2}
  &\le \frac{1}{\sigma_n^2}[(\zetab(3)+6\zetab(2))c_1^2 + ((11\zetab(2))^{\frac{1}{2}} + 2\zetab(6)^{\frac{1}{2}})  c_1c_2 ]
	\leq\frac{1}{\sigma_n^2}(4.3c_1^2 + 3c_1c_2).
\end{align}
Combining \eqref{e:ep_n31} and \eqref{e:ep_n32}, we obtain
\begin{align}\label{e:ep_n3final}
|\ep_{n,3}|
  \le\frac{1}{\sigma_n^2}(5.6c_1^2+4.1c_1c_2).
\end{align}

\noindent{\textbf{Step IV}}\\
It remains to bound $\ep_{n,2}$.  Applying Stein's equation $f^{\prime}(x)=xf(x)+\Indi{x\le z} - \PP[N\le z]$ leads to
\begin{align*}
\ep_{n,2} &= \int_0^1 \int \rho_n(x) \EE[ D_x \tilde W_n ( (\tilde W_n + tD_x \tilde W_n) f(\tilde W_n + tD_x \tilde W_n) - \tilde W_n f(\tilde W_n))  I_n ]  \la(dx) dt\\
&+ \int_0^1 \int \rho_n(x) \EE[ D_x \tilde W_n (\Indi{\tilde W_n + tD_x \tilde W_n\le z} -  \Indi{W_n\le z})  I_n ]  \la(dx) dt
\end{align*}
Since $x\mapsto xf(x)$ and $x\mapsto -\Indi{x\le z}$ are non-decreasing  functions, one has for $h\in\RR, t\in(0,1)$ that
\begin{align*}
h\cdot((x+h)f(x+h) - xf(x))\ge h\cdot((x+th)f(x+th) - xf(x)) \ge 0,
\end{align*}
and
\begin{align*}
h\cdot  (\Indi{x\le z}-\Indi{x+h\le z})\ge  h\cdot  (\Indi{x\le z}-\Indi{x+th\le z}) \ge 0.
\end{align*}
Applying these inequalities with $h=D_x \tilde W_n$ and $x=\tilde W_n$, we obtain
\begin{align*}
|\ep_{n,2}|
  &\le \int_{\X} |\rho_n(x)| \EE[ D_x \tilde W_n ( (W_n + D_x \tilde W_n) f(\tilde W_n + D_x \tilde W_n) - \tilde W_n f(\tilde W_n))] \la(dx) \\
  & + \int_{\X} |\rho_n(x)| \EE[ D_x \tilde W_n (\Indi{W_n\le z}-\Indi{W_n + D_x \tilde W_n\le z} )] \la(dx)
\end{align*}
Observe that
\begin{align*}
(\tilde W_n + D_x \tilde W_n) f(\tilde W_n + D_x \tilde W_n) - \tilde W_n f(\tilde W_n) &= D_x( \tilde Wf(\tilde W)), \\
\Indi{\tilde W_n\le z}-\Indi{\tilde W_n + D_x \tilde W_n\le z}  &= -D_x (\Indi{\tilde W_n\le z}),
\end{align*}
leading to the bound
\begin{align*}
|\ep_{n,2}|
  &\le \int_{\X} |\rho_n(x)| \EE[D_x \tilde W_n D_x( \tilde Wf(\tilde W) - \Indi{\tilde W_n\le z} )] \la(dx) = \ep_{n,2,1} + \ep_{n,2,2}
\end{align*}
where
\begin{align*}
\ep_{n,2,1}
  &:= \int_{\X} |\rho_n(x)| \rho_n(x) \EE[ D_x( \tilde W_n f(\tilde W_n) - \Indi{\tilde W_n\le z} )] \la(dx), \\
\ep_{n,2,2}
  &:= \int_{\X} |\rho_n(x)| \EE[ (D_x \tilde W_n -\rho_n(x)) D_x( \tilde W_n f(\tilde W_n) - \Indi{\tilde W_n\le z} )] \la(dx).
\end{align*}
Applying Lemma \ref{lem:tildegIBP} gives $$\ep_{n,2,1} =  \EE[\tilde\eta(|\rho_n|\rho_n) (\tilde Wf(\tilde W) - \Indi{\tilde W_n\le z}) ],$$ so that by  the fact that $|xf(x)|\le 1$ for any $x\in\RR$ in Lemma \ref{lem:steinsol}, one has
\begin{align}\label{eq:epsn21}
|\ep_{n,2,1}|
  &\le 3\EE[|\tilde \eta(|\rho_n|\rho_n)|] \le 3\Var[\tilde \eta(|\rho_n| \rho_n)]^{1/2}
	 = 3\Big(\int_{\X} |\rho_n(x)|^4 \la(dx)\Big)^{1/2}\nonumber\\
  &= \frac{3}{\sigma_n^2}  \Big( \sum_{p\in\Pc_n} \sum_{k\ge 1} \frac{k^3|\psi(p)|^4}{p^k} \Big)^{1/2}
	\le \frac{3c_1}{\sigma_n^2} \Big( 53\sum_{p\in\Pc_n} \frac{\psi(p)^2}{p}\Big)^{1/2}\nonumber\\
	&\leq \frac{159c_1}{\sigma_n^2} \sum_{p\in\Pc_n} \frac{\psi(p)^2}{p}(1-p^{-1})^{-1}\leq\frac{159c_1}{\sigma_n}.
\end{align}
On the other hand, using  $|xf(x)-yf(y)|\le 2$ and $|\1(x\le z)-\1(y\le z)|\le 1$ for any $x,y\in\RR$, we have
\begin{align*}
|\ep_{n,2,2}|&\le 3 \int_{\X} |\rho_n(x)| \EE[|D_x \tilde W_n -\rho_n(x)|]\la(dx).
\end{align*}
By \eqref{e:ep_n3}, this integral can be handled the same way as $\ep_{n,3}$, leading to the inequality
\begin{align}\label{eq:epsn22}
|\ep_{n,2,2}|
  &\le \frac{1}{\sigma_n^2}(16.8c_1^2+12.3c_1c_2)
\end{align}
Hence, by \eqref{eq:epsn21} and \eqref{eq:epsn22}
\begin{align*}
|\ep_{n,2,2}|\le  \frac{1}{\sigma_n^2} ( 49.5 c_1^2 + 24.6 c_1c_2 )
\end{align*}
so that
\begin{align}\label{e:ep_n2final}
|\ep_{n,2}|
  \le   \frac{1}{\sigma_n^2} ( 175.8c_1^2 + 24.6 c_1c_2 ).
\end{align}
Relations \eqref{eq:Tntotarget}, \eqref{e:ep_n3final} and \eqref{e:ep_n2final} lead to
\begin{align}\label{eq:e:5.1ppp}
|T_n - \EE[f'(\tilde W_n)I_n]
  &\leq \frac{1}{\sigma_n^2}(181.4c_1^2+28.7c_1c_2).
\end{align}
Relations
\eqref{e:5.1},\eqref{e:5.1p}, \eqref{e:5.1pp}, \eqref{e:ep_n1final} and \eqref{eq:e:5.1ppp} lead to the desired Kolmogorov bound.


\section{Proof of Theorem \ref{thm:mainunif}}\label{sec:mainunifthm}
Combining Theorem \ref{thm:CLTHarmonic} with Lemma \ref{lem:arra}, we deduce that
\begin{align}\label{eq:mainunifprev1}
\dk\left(\frac{\nphi(J_{n})-\mu_n}{\sigma_n},\Gauss\right)
  &\leq \dk\left(\frac{\nphi(H_{n}Q(n/H_n))-\mu_n}{\sigma_n},\frac{\nphi(H_{n})-\mu_n}{\sigma_n}\right)\\
	&+\frac{\gamma_1}{\sigma_n} + \frac{\gamma_2}{\sigma_n^2}+61\frac{\log\log(n)}{\log(n)},\nonumber
\end{align}
where $\Gauss$ is a random variable with standard Gaussian distribution and $\gamma_1,\gamma_2$, are given as in \eqref{eq:cidef}. Define
\begin{align*}
T_n
  &:=\Pb\left[\frac{\nphi(H_{n}Q(n/H_n))-\mu_n}{\sigma_n}\leq x\right]
  -\Pb\left[\frac{\nphi(H_{n})+\nphi(Q(n/H_n))-\mu_n}{\sigma_n}\leq x\right],
\end{align*}
and notice that
\begin{align*}
|T_n|
  &\leq |\Pb\left[\frac{\nphi(H_{n}Q(n/H_n))-\mu_n}{\sigma_n}\leq x \text{ and } Q(n/H_n)\not{|}\ H_{n}\right]\\
  &-\Pb\left[\frac{\nphi(H_{n})+\nphi(Q(n/H_n))-\mu_n}{\sigma_n}\leq x \text{ and } Q(n/H_n)\not{|}\ H_{n}\right]|
	+2 \Pb[Q(n/H_n) \text{ divides } H_{n}].
\end{align*}
By the additivity of $\psi$, the absolute value in the right is equal to zero, and thus,
\begin{align*}
|T_{n}|
  &\leq
	\frac{6.4\log\log(n)}{\log(n)},
\end{align*}
where the last inequality follows from Lemma \ref{lem:arra2}. Thus, by condition \textbf{(H1)}, for every $x\in\R$,
\begin{multline*}
\bigg{|}\Pb\left[\frac{\nphi(H_{n}Q(n/H_n))-\mu_n}{\sigma_n}\leq x\right]
-\Pb\left[\frac{\nphi(H_{n})-\mu_n}{\sigma_n}\leq x\right]\bigg{|}\\
\begin{aligned}
  &\leq \Pb\left[\frac{\nphi(H_{n})-\mu_n}{\sigma_n}\in[x,x+\sigma_n^{-1}c_1]\right]\vee \Pb\left[\frac{\nphi(H_{n})-\mu_n}{\sigma_n}\in[x-\sigma_n^{-1}c_1,x]\right]\\
	&\quad\quad+ \frac{6.4\log\log(n)}{\log(n)}.
\end{aligned}
\end{multline*}
By a further application of Theorem \ref{thm:CLTHarmonic},
\begin{multline}\label{eq:mainunifprev2}
\bigg{|}\Pb\left[\frac{\nphi(H_{n}Q(n/H_n))-\mu_n}{\sigma_n}\leq x\right]
-\Pb\left[\frac{\nphi(H_{n})-\mu_n}{\sigma_n}\leq x\right]\bigg{|}\\
\begin{aligned}
  &\leq \frac{6.4\log\log(n)}{\log(n)}+\sup_{x\in\R}\Pb\left[\Gauss\in[x,x+\sigma_n^{-1}c_1]\right]
	+\frac{\gamma_1}{\sigma_n}+\frac{\gamma_2}{\sigma_n^2}\\
	&\leq \frac{6.4\log\log(n)}{\log(n)}+\frac{1}{\sqrt{2\pi}\sigma_n}c_1+\frac{\gamma_1}{\sigma_n} + \frac{\gamma_2}{\sigma_n^2},
\end{aligned}
\end{multline}
where the last inequality follows from an application of the mean value theorem to the standard Gaussian density.
The Kolmogorov bound \eqref{eq:Kolmogorov_Jn} then follows from \eqref{eq:mainunifprev1} and \eqref{eq:mainunifprev2}.\\

\noindent To show \eqref{eq:Wasserstein_Jn}, we use Theorem \ref{thm:CLTHarmonic}, to get	
\begin{align}\label{ineq:d1finalb1mainthm}
d_1\left(\frac{\nphi(J_{n})-\mu_n}{\sigma_n},\Gauss\right)
  &\leq d_1\left(\frac{\nphi(H_{n})+\nphi(Q(n/H_n))-\mu_n}{\sigma_n},\frac{\nphi(H_{n})-\mu_n}{\sigma_n}\right)\\
	&+d_1\left(\frac{\nphi(H_{n})+\nphi(Q(n/H_n))-\mu_n}{\sigma_n},\frac{\nphi(J_n)-\mu_n}{\sigma_n}\right)
	+\frac{\gamma_3}{\sigma_n},\nonumber
\end{align}
where $\gamma_3$ is given by \eqref{eq:cidef}. The first term can be bounded in the following way
\begin{align}\label{ineq:d1finalb2mainthm}
d_1\left(\frac{\nphi(H_{n})+\nphi(Q(n/H_n))-\mu_n}{\sigma_n},\frac{\nphi(H_{n})-\mu_n}{\sigma_n}\right)
  &\leq \frac{1}{\sigma_n}\E[| \nphi(Q(n/H_n))|]
	\leq\frac{c_1}{\sigma_n}.
\end{align}
To bound the term
\begin{align*}
T_n
  &:=d_1\left(\frac{\nphi(H_{n})+\nphi(Q(n/H_n))-\mu_n}{\sigma_n},\frac{\nphi(J_n)-\mu_n}{\sigma_n}\right),
\end{align*}
we use Dobrushin's theorem, to guarantee the existence of a random variable $\mathfrak{Y}_n$ such that
$ \mathfrak{Y}_n \stackrel{Law}{=}\nphi(H_{n})+\nphi(Q(n/H_n))$ and
$$\dtv(\psi(J_n),\nphi(H_{n})+\nphi(Q(n/H_n)))=\Pb[\psi(J_n)\neq\mathfrak{Y}_n].$$
In principle, $\mathfrak{Y}_n$ might not be defined
over $(\Omega,\Fc,\Pb)$, but we will assume that it is, at the cost of extending the underlying probability space, if necessary. On the other hand,  for any random variables $X,Y$, the bound $\dw(X,Y)\le \EE[|\wt X-\wt Y|]$  holds where $\wt X$ and $\wt Y$ are equal in law to $X$ and $Y$ respectively. From here it follows that
\begin{align*}
T_n
  &\leq\frac{1}{\sigma_n}\E[|\nphi(J_n)- \mathfrak{Y}_n |]\nonumber
	=\frac{1}{\sigma_n}\E[|\nphi(J_n)-\mathfrak{Y}_n|\Indi{\mathfrak{Y}_n\neq \psi(J_n)}]\\
	&\leq \frac{\Pb[\mathfrak{Y}_n \neq \psi(J_n)]^{\frac{1}{2}}}{\sigma_n}(\|\nphi(J_n)\|_{L^2(\Omss)}
	+\|\nphi(H_{n})+\nphi(Q(n/H_n))\|_{L^2(\Omss)}),
\end{align*}
where the last step follows from Cauchy-Schwarz inequality. By Lemma \ref{lem:arra},
\begin{align*}
\Pb[\mathfrak{Y}_n\neq \psi(J_n)]
  &=\dtv(\nphi(H_{n})+\nphi(Q(n/H_n)),\nphi(J_n))\\
	&\leq \dtv(\nphi(H_{n}Q(n/H_n)),\nphi(J_n)) + \dtv(\nphi(H_{n})+\nphi(Q(n/H_n)),\nphi(H_{n}Q(n/H_n)))\\
  &\leq 61\frac{\log\log(n)}{\log\log(n)}+\dtv(\nphi(H_{n})+\nphi(Q(n/H_n)),\nphi(H_{n}Q(n/H_n))).
\end{align*}
To handle the remaining term, we observe that
\begin{align*}
\dtv(\nphi(H_{n})+\nphi(Q(n/H_n)),\nphi(H_{n}Q(n/H_n)))
  &\leq \Pb[Q(n/H_n)\text{ divides } H_{n}]\leq\frac{6.4\log\log(n)}{\log(n)},
\end{align*}
so that
\begin{align*}
\Pb[\mathfrak{Y}_n\neq \psi(J_n)]
  &\leq \frac{67.4\log\log(n)}{\log(n)}.
\end{align*}
From the previous analysis it follows that
\begin{align}\label{ineq:d3finalb1mainthm}
T_n
  &\leq 8.3\bigg(\frac{\log\log n}{\log n}\bigg)^{\frac{1}{2}}
	(\sigma_n^{-1}\|\nphi(J_n)\|_{L^2(\Omss)}+\sigma_n^{-1}\|\nphi(H_n)\|_{L^2(\Omss)}+\sigma_n^{-1 }\|\nphi\|_{\Pc }).
\end{align}
Combining \eqref{ineq:d3finalb1mainthm} with Lemma \ref{Lem:L2bounds}, we obtain
\begin{align*}
T_n
  &\leq \frac{\log\log (n)^{\frac{3}{2}}}{\log (n)^{\frac{1}{2}}}
	(24.9\sigma_n^{-1}c_1+60.4\sigma_n^{-1}c_2),
\end{align*}
which by the condition $\sigma_{n}^2\geq 3(c_1^2+c_2^2)$, gives
\begin{align*}
T_n
  &\leq 49.3\frac{\log\log (n)^{\frac{3}{2}}}{\log (n)^{\frac{1}{2}}}.
\end{align*}
Relation \eqref{eq:Wasserstein_Jn} follows from \eqref{ineq:d1finalb1mainthm}, \eqref{ineq:d1finalb2mainthm} and
\eqref{ineq:d3finalb1mainthm}.

\section{Proof of Theorem \ref{t:poisson}}\label{sec:theorempoisson}
\subsection{Harmonic case}
Recall that $\Lc(\psi(H_n))= \Lc(Y_n+R_n|\mathscr{A}_n)$, where $Y_n$ and $R_n$ are given by \eqref{eq:YnRndecompq}. Set $\tilde Y_n=Y_n+R_n$. In this section, we show that the Poisson approximation occurs for the non-standardized $\psi(H_n)$ as long as $\psi$ is equal to 1 for a sufficiently large proportion of $p\in\mathcal P$.

\medskip

\noindent Suppose that $h:\N_{0}\rightarrow\RR$ is an indicator function of a subset of $\N_0$. Let $f$ be the solution to \eqref{eq:steinpoissoneq} with
intensity $\lambda_{n}$, namely,
\begin{align*}
h(k) - \E[h(M_n)]
  &=\lambda_nf(k+1)-kf(k).
\end{align*}
Notice that
\begin{align*}
|\EE[h(\psi(H_n))] -\E[h(M_n)]|
  &=|\E[\lambda_nf(\psi(H_n)+1)-\psi(H_n) f(\psi(H_n))]|\\
  &=\Pb[\scr{A}_n]^{-1}|\E[(\lambda_nf(\tilde{Y}_n+1) - \tilde{Y}_nf(\tilde{Y}_n))I_n]|\\
	&\leq 2|\E[(\lambda_nf(\tilde{Y}_n+1) - (Y_n+R_n)f(\tilde{Y}_n))I_n]|.
\end{align*}
Using Lemma \ref{lem:steinsolPo} and the bound \eqref{eq:Rnmainbound} for $\EE[|R_n|]$, we get
\begin{align}\label{e:PLT_s1}
|\EE[h(\psi(H_n))] -\E[h(M_n)]|
  &\leq 2|\lambda_n\E[f(\tilde{Y}_n+1)I_n] - \E[Y_nf(\tilde{Y}_n))I_n]|+\lambda_n^{-\frac{1}{2}}(c_1 + 2c_2).
\end{align}
Define the kernel $\varrho_n:\mathbb{X}\rightarrow\R$ by
\begin{align*}
\varrho_n(p,k)
  &:=k\nphi(p) \1(p\le n)
\end{align*}
for $p\in\Pc$ and $k\in\N$.
Recall that by \eqref{eq:xipfuncrep}, $Y_n = \eta(\varrho_n)$ and $\EE[\eta(\varrho_n)]= \la_n$ by our choice of $\la_n$. Thus, by Mecke's equation \cite[Theorem 4.1]{LP} and \eqref{e:xi_add_one}
\begin{align*}
\E[Y_nf(\tilde{Y}_n)I_n]
&=\E[\eta(\varrho_n)f(\tilde{Y}_n)I_n] = \int \rho_n(x) \EE [f(\tilde Y_n(\eta+\de_x)) I_n(\eta+\de_x)] \la(dx)
\end{align*}
where for each $x=(p,k)\in\Pc_n\times \NN$, we have
\begin{align}
I_n(\eta+\de_x) &= \1(p^k \prod_{q\in \Pc_n\setminus\{p\}}q^{\xi_q}\le n), \notag\\
\tilde Y_n(\eta+\de_x)&= \psi(p^{k+\xi_p}) + \sum_{q\in \Pc_n\setminus\{p\}} \psi(p^{\xi_q}). \label{eq:tildeY_n_addone}
\end{align}
Therefore,
\begin{align}\label{e:PLT_s2}
\E[Y_nf(\tilde{Y}_n)I_n] -\la_n  \EE[f(\tilde Y_n + 1)] = \epsilon_{n,1} + \epsilon_{n,2},
\end{align}
where
\begin{align*}
\epsilon_{n,1} &:= \int \varrho_n(x) \EE[ (f(\tilde Y_n(\eta+\de_x)) - f(\tilde Y_n+1))  I_n(\eta+\de_x)] \la(dx), \\
\epsilon_{n,2}&:=\int \varrho_n(x) \EE [f(\tilde Y_n + 1) (I_n(\eta+\de_x) - I_n)] \la(dx).
\end{align*}
We have by the first half of Lemma \ref{lem:steinsolPo}  and \eqref{eq:ep_n_step3}, \eqref{eq:epsilonnbound} that
\begin{align}\label{e:PLT_s3}
|\epsilon_{n,2}|\le \frac{1}{\sqrt{\la_n}}\int |\varrho_n(x)| \EE[ |D_x I_n| ] \la(dx)  \le \frac{8c_1}{\sqrt{\la_n}}.
\end{align}
By the second half of Lemma \ref{lem:steinsolPo} and \eqref{eq:tildeY_n_addone},
\begin{align*}
|\epsilon_{n,1}| &\le \frac{1}{\la_n}\int |\varrho_n(x)| \EE[|\tilde Y_n(\eta+\de_x) - (\tilde Y_n+1)|] \la(dx) \\
&= \frac{1}{\la_n}\sum_{p\in\Pc_n}\sum_{k\ge 1} \frac{|\psi(p)|}{p^k}  \EE[|\psi(p^{\xi_p+k}) - \psi(p^{\xi_p})-1|] \\
&=\frac{1}{\la_n} \sum_{p\in\Pc_n} \frac{1-p^{-1}}{p}|\psi(p)| |\psi(p)-1|  +  \frac{1}{\la_n}\sum_{p\in\Pc_n} \frac{|\psi(p)|}{p} \EE[|\psi(p^{\xi_p+1})  - \psi(p^{\xi_p})-1|\1(\xi_p\ge 1)]  \\
&\quad + \frac{1}{\la_n}\sum_{p\in\Pc_n}\sum_{k\ge 2} \frac{|\psi(p)|}{p^k}  \EE[|\psi(p^{\xi_p+k}) -  \psi(p^{\xi_p})-1|]  =: \epsilon_{n,1,1} + \epsilon_{n,1,2} + \epsilon_{n,1,3}.
\end{align*}
This decomposition is totally parallel to our way of obtaining \eqref{e:ep_n3final}. Recalling \eqref{eq:delta's}, \eqref{eq:delta1finalb}
and \eqref{eq:delta2finalb}, we have
\begin{align*}
\epsilon_{n,1,2}
&\le \frac{c_1}{\la_n}
\big(\de_1 + \de_2 + \sum_{p\in\Pc_n} p^{-1}\PP[\xi_p\ge 1]\big)\\
  &\le \frac{c_1}{\la_n}[\zetab(2)^{1/2} c_2 + \zetab(2) c_1 + \zetab(4)^{1/2} c_2 + \zetab(2)]
  \leq \frac{1}{\lambda_n}(0.9c_1^2+1.1c_1c_2+0.7c_1).
\end{align*}
Similarly, recalling \eqref{eq:deltaprime's}, \eqref{eq:delta1primebound} and \eqref{eq:delta2primebound}, we have
\begin{align*}
\epsilon_{n,1,3}
  &\le \frac{c_1}{\la_n} (\de_1'+\de'_2
+ \sum_{p\in\Pc_n}\sum_{k\ge 2} \frac{1}{p^k})
\le \frac{c_1}{\la_n}  ( (11 \zetab(2))^{1/2} c_2 +   \zetab(3) c_1 + 2\zetab(6)^{1/2} c_2  + 2\zetab(2))\\
  &\le \frac{1}{\la_n}  (0.3c_1^2+3c_1c_2 +  1.3c_1),
\end{align*}
yielding
\begin{align}\label{e:PLT_s4}
|\epsilon_{n,1}|
  &\le  \frac{1}{\la_n} \sum_{p\in\Pc_n} \frac{|\psi(p)-1|}{p}
	+ \frac{1}{\la_n}  [1.2c_1^2 + 4.1c_1c_2 +  2c_1].
\end{align}
Combining \eqref{e:PLT_s1}, \eqref{e:PLT_s2}, \eqref{e:PLT_s3} and \eqref{e:PLT_s4} gives the desired bound.

\subsection{Uniform case: the total variation bound}
We first prove the total variation bound under the additional assumption that $\psi(p)=1$ for all $p\in\Pc$. For the Kolmogorov bound without this assumption, we follow the same strategy and prove it in the next subsection.

By the triangle inequality,
\begin{align}
\dtv(\psi(J_n), M_n)
&\leq \eta_1 + \eta_2 +\eta_3 + \eta_4. \label{e:gadef}
\end{align}
where
\begin{align*}
\begin{array}{ll}
\eta_1:=\dtv(\psi(J_n), \psi(H_nQ(n/H_n)))  &  \eta_2:=\dtv(\psi(H_nQ(n/H_n)),  \psi(H_n)+1)\\
\eta_3:=\dtv(\psi(H_n)+1, M+1)              &  \eta_4:=\dtv(M+1,M).
\end{array}
\end{align*}
Notice that Lemma \ref{lem:arra},
\begin{align}\label{eq:eta1bound}
\eta_1\le  61\frac{\log\log(n)}{\log(n)},
\end{align}
and the bound
\begin{align}\label{eq:eta3bound}
\eta_3
  &\leq \frac{\tilde{\gamma}_{1}}{\sqrt{\la_n}} +  \frac{\tilde{\gamma}_{2}}{\la_n} + 2c_1\sum_{p\in\Pc_n} \frac{|\psi(p)-1|}{p}
\end{align}
can be obtained from \eqref{eq:citildedef}.
%
For handling $\eta_4$, we use Stein's equation for the Poisson distribution with parameter $\la_n$.
By taking $M_n$ as the target distribution, we consider
\begin{align*}
&\la \EE[f(M_n+2)] -  \EE[(M_n+1)f(M_n+1)] \\
&= \la \EE[g(M_n+1)] - \EE[M_n g(M_n)] - \EE[g(M_n)]= -\EE[g(M)]= -\EE[f(M+1)],
\end{align*}
where we have used the condition $g(k)= f(k+1)$ for any $k\in \NN_0$, as well as the characterizing equation for the Poisson random variable $M_n$. Therefore, by the uniform bound in Lemma \ref{lem:steinsolPo}, one has
\begin{align}\label{eq:eta4bound}
\eta_4=\dtv(M_n+1, M_n) \le
\frac{1}{\sqrt{\la_n}}.
\end{align}

\noindent It remains to handle $\eta_2$. Define $\scr D_n =\{Q(n/H_n) \mbox{ divides } H_n\}$. Notice  the inclusion $$\{\psi(H_nQ(n/H_n))\neq  \psi(H_n)+1\}\subset \scr D_n.$$ By additivity, the assumption $\psi(p)=1$ and Lemma \ref{lem:arra2},
\begin{align*}
\eta_2\le \PP[\scr D_n] \le \frac{6.4\log\log(n)}{\log(n)}.
\end{align*}
The desired bound follows immediately.

\subsection{Uniform case: the Kolmogorov bound} Due to the relation $\dk(X,Y)\le \dtv(X,Y)$ for arbitrary random variables $X,Y$, we have the bound
\begin{align*}
\dk(\psi(J_n), M_n)
&\leq \eta_1 + \eta'_2 +\eta_3 + \eta_4,
\end{align*}
where
\begin{align*}
\eta'_2:= \dk(\psi(H_nQ(n/H_n)),  \psi(H_n)+1).
\end{align*}
It remains to handle $\eta'_2$.
Let $z\in\RR$ be given. Then,
\begin{align*}
T
  &:=|\PP[\psi(H_n Q(n/H_n))\le z] - \PP[\psi(H_n)+1\le z]| \\
  &\le |\PP[\psi(H_n) + \psi(Q(n/H_n))\le z, \scr D_n^c] - \PP[\psi(H_n)+1\le z, \scr D_n^c ]| + \PP[\scr D_n].
\end{align*}
Let $A\Delta B$ denote the symmetric difference of two given subsets $A,B\in\Omss$.
Observe that the first term in the right-hand side is bounded by
\begin{multline*}
\PP[\{\psi(H_n) + \psi(Q(n/H_n))\le z, \scr D_n^c\}\Delta\{\psi(H_n)+1\le z, \scr D_n^c \}]\\
\begin{aligned}
  &\leq \PP[\{\psi(H_n) + \psi(Q(n/H_n))\le z\}\backslash\{\psi(H_n)+1\le z\}]\\
	&+\PP[\{\psi(H_n)+1\le z\}\backslash\{\psi(H_n) + \psi(Q(n/H_n))\le z\}].
\end{aligned}
\end{multline*}
Moreover, by \textbf{(H1)}, we have the inclusions
\begin{align*}
\{\psi(H_n) + \psi(Q(n/H_n))\le z\}\backslash\{\psi(H_n)+1\le z\}
\subset\{ \psi(H_n)\in [z-1,z+c_1\vee 1] \}\\
\{\psi(H_n)+1\le z\}\backslash\{\psi(H_n) + \psi(Q(n/H_n))\le z\}
\subset\{ \psi(H_n)\in [z-c_1\vee 1	,z+1] \},
\end{align*}
and consequently,
\begin{align*}
T
  &\le 2\PP[ z-c_1\vee 1\le \psi(H_n) \le z+c_1\vee 1] + \PP[\scr D_n] \\
  &\le 2 \dtv(\psi(H_n), M_n) + 2\PP[  z-c_1\vee 1\le M_n \le z+c_1\vee 1] + \PP[\scr D_n].
\end{align*}
Combining Theorem \ref{thm:PLTHarmonic} with Lemma \ref{lem:arra2}, we thus obtain the bound
\begin{align*}
T
  &\le 2\PP[ z-c_1\vee 1\le M_n \le z+c_1\vee 1]+\frac{2\tilde{\gamma}_{1}}{\sqrt{\la_n}} +  \frac{2\tilde{\gamma}_{2}}{\la_n} + 4c_1\sum_{p\in\Pc_n} \frac{|\psi(p)-1|}{p} + 6.4\frac{\log\log(n)}{\log(n)}.
\end{align*}
Finally, using the fact that the Poisson distribution is unimodal (which implies that the probability of the atoms of $M_n$ is
bounded by $\frac{\lambda_n^{\lambda_n}}{\lambda_n!}e^{-\lambda_n}$), as well as Stirling's formula, we get the estimate
\begin{align*}
\PP[ z-c_1\vee 1\le M_n \le z+c_1\vee 1]
  &\leq \frac{2(c_1\vee 1)}{\sqrt{2\pi\lambda_n}}
\end{align*}
From here we conclude that
\begin{align}\label{eq:eta2bound}
\eta_2
  &\leq \frac{2\tilde{\gamma}_{1}}{\sqrt{\la_n}} +  \frac{1}{\la_n}\big(2\tilde{\gamma}_{2}+\frac{4(c_1\vee 1)}{\sqrt{2\pi}}\big) + 4c_1\sum_{p\in\Pc_n} \frac{|\psi(p)-1|}{p} + 6.4\frac{\log\log(n)}{\log(n)}.
\end{align}
Relation \eqref{eq:t:poisson} follows from \eqref{eq:eta1bound}-\eqref{eq:eta2bound}.


\appendix

\section{Generalization of Theorem \ref{thm:multiplicities}}\label{sec:generalKeystep}
Next we present an extension of Proposition \ref{thm:multiplicities}. Although Proposition \ref{thm:multiplicities} is good enough for proving
the main results of the manuscript, Proposition \ref{thm:multiplicities2} below illustrates that not only the law of $H_n$ satisfies
a relation of the type \eqref{eq:conditionallaws}, but also any random variable supported in $\N$ with a suitable multiplicative
property over its probability distribution.
\begin{prop}\label{thm:multiplicities2}
Suppose that $n\geq 21$. Let $\vartheta:\N\rightarrow\R_{+}$ be a non-negative multiplicative function (i.e. $\vartheta(mn)=\vartheta(m)\vartheta(n)$ if
$m$ and $n$ are co-prime) satisfying $\sum_{k=0}^{\infty}\vartheta(p^k)<\infty$ for all $p\in\Pc$. Let $H_n^{\vartheta}$ be a random variable defined in
$(\Omss,\Fc,\Pb)$ and supported in $\N\cap[1,n]$,  with probability distribution given by
\begin{align*}
\Pb[H_{n}^{\vartheta}=k]
  &=\frac{1}{L_n^{\vartheta}}\vartheta(k),
\end{align*}
for $k=1,\dots, n$ and $L_{n}^{\vartheta}:=\sum_{k=1}^n\vartheta(k)$. Consider a family $\{\xi_{p}^{\vartheta}\}_{p\in\Pc}$ of independent random variables defined in
$(\Omss, \Fc,\Pb)$, with
\begin{align*}
\Pb[\xi_p^{\vartheta}=k]
  &=\nu_p\vartheta(p^k),
\end{align*}
for some $\nu_p\geq0$ satisfying $\sum_{k=0}^{\infty}\nu_p\vartheta(p^k)=1$. Define the event
\begin{align*}
A_{n}^{\vartheta}
  &:=\Big\{\prod_{p\in\mathcal{P}_n}p^{\xi_{p}^{\vartheta}}\leq n\Big\},
\end{align*}
as well as the random vector $\vec{C}_{\vartheta}(n):=(\alpha_{p}(H_n^{\vartheta}) ; p\in\Pc_n)$. Then
\begin{align}\label{eq:PorbAsn2}
\PP(A_n^{\vartheta})
  &=L_n^{\vartheta}\prod_{p\in\Pc_n}\nu_p,
\end{align}	
and
\begin{align}\label{eq:conditionallaw2}
    \Lc(\vec{C}_{\vartheta}(n))
      &=\Lc(\vec{\xi}^{\vartheta}(n)\ |\ A_n^{\vartheta}),
\end{align}
where $\vec{\xi}^{\vartheta}(n):=(\xi_p^{\vartheta} ; p\in\Pc_n)$.
\end{prop}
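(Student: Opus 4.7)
The plan is to mirror exactly the proof of Theorem \ref{thm:multiplicities}, replacing geometric probability weights by the multiplicative weights determined by $\vartheta$. The unique prime factorization theorem gives a bijection between $\N\cap[1,n]$ and the lattice set
\begin{align*}
\Kc_n=\Big\{(c_p;\, p\in\Pc_n)\in\N_0^{\pi(n)}\, :\, \prod_{p\in\Pc_n}p^{c_p}\leq n\Big\},
\end{align*}
which is exactly the event $A_n^{\vartheta}$ expressed on the coordinates of $\vec\xi^{\vartheta}(n)$. Fixing a bounded test function $f:\N_0^{\pi(n)}\to\R$, I would compute $\E[f(\vec\xi^{\vartheta}(n))\,\1(A_n^{\vartheta})]$ by expanding over $\Kc_n$.

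The central algebraic step is to pull out the weights. By the independence of the $\xi_p^{\vartheta}$ and their prescribed marginals,
\begin{align*}
\Pb[\xi_p^{\vartheta}=c_p\text{ for all }p\in\Pc_n]=\prod_{p\in\Pc_n}\nu_p\vartheta(p^{c_p}),
\end{align*}
and the multiplicativity of $\vartheta$ applied to the pairwise coprime prime powers $\{p^{c_p}\}_{p\in\Pc_n}$ gives $\prod_{p\in\Pc_n}\vartheta(p^{c_p})=\vartheta(\prod_{p\in\Pc_n}p^{c_p})$. Running the bijection back from $\Kc_n$ to $\{1,\dots,n\}$ via $k=\prod_p p^{\alpha_p(k)}$, one obtains
\begin{align*}
\E[f(\vec\xi^{\vartheta}(n))\,\1(A_n^{\vartheta})]=\Big(\prod_{p\in\Pc_n}\nu_p\Big)\sum_{k=1}^n f(\alpha_p(k),p\in\Pc_n)\,\vartheta(k)=\Big(\prod_{p\in\Pc_n}\nu_p\Big)L_n^{\vartheta}\,\E[f(\vec C_{\vartheta}(n))].
\end{align*}

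Specializing to $f\equiv1$ immediately yields \eqref{eq:PorbAsn2}, and the general identity then rearranges to $\E[f(\vec\xi^{\vartheta}(n))\mid A_n^{\vartheta}]=\E[f(\vec C_{\vartheta}(n))]$ for every bounded $f$, which is the conditional-law identity \eqref{eq:conditionallaw2}. The argument is essentially formal once one has the multiplicativity factorization together with the bijection; the only place that required genuine hypothesis was the assumption that $\vartheta$ be multiplicative and that $\sum_k\vartheta(p^k)<\infty$ so that the normalizers $\nu_p$ exist and the marginals of $\xi_p^{\vartheta}$ are well-defined. The main (minor) obstacle is bookkeeping the multiplicativity of $\vartheta$ across arbitrary finite families of distinct primes, which follows by a straightforward induction from the binary relation $\vartheta(mn)=\vartheta(m)\vartheta(n)$ for coprime $m,n$; unlike the original proof, no analogue of the lower bound in \eqref{eq:PrAnineq} is claimed here, so no Mertens-type estimate is needed.
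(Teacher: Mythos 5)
Your proof is correct and essentially reproduces the paper's argument: both hinge on the bijection between $\Kc_n$ and $[n]$ via unique factorization, followed by pulling the product $\prod_p \vartheta(p^{c_p}) = \vartheta\bigl(\prod_p p^{c_p}\bigr)$ out by multiplicativity on the pairwise coprime prime powers. The only cosmetic difference is that the paper's proof of this generalized proposition is written with characteristic functions $e^{\mathbf{i}\sum_p \lambda_p \xi_p^\vartheta}$ as the separating family, whereas you use arbitrary bounded test functions $f$ (which is exactly what the paper does in its proof of the special case, Theorem~\ref{thm:multiplicities}); the two are interchangeable, and your remark that no Mertens-type bound is needed here, since no analogue of \eqref{eq:PrAnineq} is asserted, is accurate.
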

\begin{Remark}
By choosing $\nu_{p}=(1-p^{-1})$ and $\vartheta(m)=\frac{1}{m}$, we obtain Proposition \ref{thm:multiplicities} as a Corollary of Proposition \ref{thm:multiplicities2}
\end{Remark}
\begin{proof}
Consider a fixed vector $\vec{\lambda}=(\lambda_p ; p\in\Pc_n)\in\R^{\pi(n)}$ and define $\mathcal{K}_{n}$ by \eqref{eq:mathKndef}.
As in the proof of Proposition \ref{thm:multiplicities2}, the prime factorization theorem allows us to write
\begin{align*}
\E[e^{\sum_{p\in\Pc_n}\textbf{i} \lambda_p\xi_p^{\vartheta}} \Indi{A_n}]
  &= \sum_{(c_p;\ p\in\Pc_n)\in\mathcal{K}_n}
	\exp\{\textbf{i}\sum_{p\in\Pc_n} \la_p c_p \} \Pb[\xi_p^{\vartheta}=c_p \text{ for all }p\in\Pc_n] \\
  &= \sum_{(c_p;\ p\in\Pc_n)\in\mathcal{K}_n}  \exp\{\textbf{i} \sum_{p\in\Pc_n}\la_p \al_p(\prod_{\theta\in\mathcal P_n} \theta^{c_{\theta}}) \}
	\Pb[\xi_p^{\vartheta}=c_p \text{ for all }p\in\Pc_n] \\
&=  \sum_{(c_p;\ p\in\Pc_n)\in\mathcal{K}_n}  \exp\{\textbf{i} \sum_{p\in\Pc_n}\la_p \al_p(\prod_{\theta\in\mathcal P_n} \theta^{c_{\theta}}) \}
\big(\prod_{p\in\Pc_n} \nu_p\big)\big( \prod_{p\in\Pc_n} \vartheta(p^{c_p})\big)\\
&=  \sum_{(c_p;\ p\in\Pc_n)\in\mathcal{K}_n}  \exp\{\textbf{i} \sum_{p\in\Pc_n}\la_p \al_p(\prod_{\theta\in\mathcal P_n} \theta^{c_{\theta}}) \}
\big(\prod_{p\in\Pc_n} \nu_p\big) \vartheta(\prod_{p\in\Pc_n}p^{c_p})\\
&= \sum_{k=1}^n\exp\{\textbf{i} \sum_{p\in\Pc_n} \la_p \al_p(k)\} \vartheta(k)\prod_{p\in\Pc_n}\nu_p
=\E[\exp\{\sum_{p\in\Pc_n}\textbf{i} \lambda_p\alpha_p(H_n^{\vartheta})\}]L_n^{\vartheta}\prod_{p\in\Pc_n}\nu_p.
\end{align*}
Relations \eqref{eq:PorbAsn2} and \eqref{eq:conditionallaw2} then follow analogously to the proof of Proposition \ref{thm:multiplicities}
\end{proof}

\section{Technical lemmas}
\noindent In this section, we prove some technical lemmas that were repidetely used throughout the manuscript.
\begin{Lemma}\label{Lemma:variancesomega}
Let $\omega, \Omega$ be the prime counting functions defined by \eqref{eq:littleomegadef} and \eqref{eq:bigomegadef}. Then, for all $n\geq 21,$
\begin{align}
|\E[\omega(J_n)]-\log\log(n)|
  &\leq 1.5,\label{eq:Eomega}\\
|\EE[\Omega(J_n)]-\log\log(n)|
  &\leq 4.8,\label{eq:EOmega}\\
\E[\omega(J_n)^2]
  &\leq 5.4\log\log(n)^2.\label{eq:Eomegasqu}\\
\E[\omega(H_n)^2]
  &\leq 5.4\log\log(n)^2.\label{eq:EomegasquHar}
\end{align}
\end{Lemma}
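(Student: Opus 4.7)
The plan is to handle each of the four bounds by expanding the arithmetic function as a sum of indicators over primes (or prime powers), applying \eqref{eq:probdivide} to evaluate divisibility probabilities, and comparing with the closed-form estimates from Mertens' second formula \eqref{eq:Mertensinv}.

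For \eqref{eq:Eomega}, I would write $\E[\omega(J_n)] = n^{-1}\sum_{p\in\Pc_n}\lfloor n/p\rfloor$; this differs from $\sum_{p\in\Pc_n} p^{-1}$ by at most $\pi(n)/n\leq 1.5/\log(n)$, and \eqref{eq:Mertensinv} controls the main sum up to the absolute constant $C\approx 0.261$. For \eqref{eq:EOmega}, I would decompose $\Omega(J_n)=\sum_p\alpha_p(J_n)$ and compare each $\E[\alpha_p(J_n)]=\sum_{k=1}^{\lfloor\log_p n\rfloor} n^{-1}\lfloor n/p^k\rfloor$ with its geometric limit $(p-1)^{-1}$; the floor-type errors are controlled by the number of prime powers at most $n$, which is bounded by $\pi(n)+2\sqrt{n}$, while the identity $(p-1)^{-1}=p^{-1}+(p(p-1))^{-1}$ together with $\sum_{p\in\Pc}(p(p-1))^{-1}\leq\sum_{k\geq 2}(k(k-1))^{-1}=1$ absorbs the gap between $\sum_{p\in\Pc_n}(p-1)^{-1}$ and $\log\log(n)$ into the allowed additive constant.

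For the second moment bounds \eqref{eq:Eomegasqu} and \eqref{eq:EomegasquHar}, I would expand the square as
\[
\E[\omega(J_n)^2] = \E[\omega(J_n)] + \sum_{\substack{p,q\in\Pc_n\\p\neq q}}\PP[pq\mid J_n]\leq \E[\omega(J_n)] + \Bigl(\sum_{p\in\Pc_n}\frac{1}{p}\Bigr)^2,
\]
where the inequality uses $\PP[pq\mid J_n]\leq 1/(pq)$ from \eqref{eq:probdivide}. Both summands on the right are then bounded via \eqref{eq:Eomega} and \eqref{eq:Mertensinv}, yielding an upper bound of the form $(\log\log(n))^2+O(\log\log(n))$. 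The same argument applies verbatim to \eqref{eq:EomegasquHar}, since $\PP[d\mid H_n]=L_{\lfloor n/d\rfloor}/(dL_n)\leq 1/d$ for every divisor $d\in\NN$; consequently every divisibility probability in the analogous expansion obeys the same upper bound as in the $J_n$-case.

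The main (but mild) obstacle is the numerical verification of the constant $5.4$ in \eqref{eq:Eomegasqu} and \eqref{eq:EomegasquHar} at the smallest admissible value $n=21$: since $\log\log(21)\approx 1.12$, the leading coefficient $5.4$ leaves relatively little slack against the subleading $O(\log\log(n))$ and $O(1)$ contributions, so one must explicitly check the inequality at the boundary after careful bookkeeping of the constants arising from \eqref{eq:Mertensinv} and \eqref{eq:Eomega}.
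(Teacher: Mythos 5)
Your overall strategy matches the paper's almost exactly: expand $\omega$ (resp.\ $\Omega$) as a sum of indicators, apply \eqref{eq:probdivide} to compare divisibility probabilities to $1/d$, and use Mertens' estimate \eqref{eq:Mertensinv}, with $\PP[d\mid H_n]\le 1/d$ handling the harmonic case. Two comments.

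For \eqref{eq:EOmega} you propose comparing $\E[\alpha_p(J_n)]$ to the full geometric mean $(p-1)^{-1}$ and then splitting $(p-1)^{-1}=p^{-1}+(p(p-1))^{-1}$. This works, but the paper's route is slightly cleaner: it writes $\E[\Omega(J_n)]=\sum_p\sum_{k\ge 1}\PP(p^k\mid J_n)$, bounds $\PP(p^k\mid J_n)\le p^{-k}$ directly so that the $k\ge 2$ tail is absorbed into $2\zetab(2)$, and reuses the floor-error bound from the $\omega$-computation rather than introducing prime-power counting. Both are fine, but your version asks you to separately control the truncated-geometric tail $\sum_{k>\lfloor\log_p n\rfloor}p^{-k}$, which you do not mention; it is easy but needs to be said.

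The genuine issue is in \eqref{eq:Eomegasqu} and \eqref{eq:EomegasquHar}. You bound the diagonal contribution by $\E[\omega(J_n)]$ and then invoke \eqref{eq:Eomega}, which gives $\E[\omega(J_n)]\le\log\log(n)+1.5$. With $x=\log\log(n)$ and $\sum_{p\le n}p^{-1}\le x+0.261+2/\log(n)$, your bound is $x^2+2.836x+2.343$ at $n=21$, and you need this $\le 5.4x^2$, i.e.\ $x\gtrsim 1.12$ — but $\log\log(21)\approx 1.1133$, so the claimed constant fails exactly at the boundary you flagged. The fix, which the paper uses, is to not route through \eqref{eq:Eomega} at all: bound the diagonal directly by $\sum_{p\le n}\PP[p\mid J_n]\le\sum_{p\le n}p^{-1}\le x+1$ (valid for $n\ge 15$), giving the tighter $(x+2)(x+1)=x^2+3x+2\le 5.4x^2$ for $x\ge 1.097$, which does hold at $n=21$. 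The same substitution is needed for $H_n$, where in any case \eqref{eq:Eomega} does not literally apply.

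Everything else — the off-diagonal bound $\PP[pq\mid J_n]\le 1/(pq)$, the identity $\PP[d\mid H_n]=L_{\lfloor n/d\rfloor}/(dL_n)\le 1/d$, and the $|\Pc_n|/n\le 1.5/\log(n)$ floor-error bound for \eqref{eq:Eomega} — is exactly what the paper does.
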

\begin{proof}
By using the representation
\begin{align*}
\omega(J_{n})
  &=\sum_{p\in\Pc_n}\Indi{\alpha_p(J_n)\geq 1},
\end{align*}
as well as identity \eqref{eq:probdivide}, we can write
\begin{align}\label{eq:Eomegaprev}
|\E[\omega(J_{n})]-\sum_{p\in\Pc_n}\frac{1}{p}|
  &\leq\sum_{p\in\Pc_n}|\Pb[p \text{ divides } J_n]-\frac{1}{p}|
	\leq\frac{|\Pc_n|}{n}\leq \frac{1.5}{\log(n)},
\end{align}
where the last inequality follows from the fact that $\pi(n)\leq 1.5\frac{n}{\log(n)}$. Relation \eqref{eq:Eomega},  follows from
\eqref{eq:Eomegaprev}, \eqref{eq:Mertensinv} and the condition $n\geq 21$.\\

\noindent To prove \eqref{eq:EOmega}, we notice that
\begin{align*}
\EE[\Omega(J_n)] = \sum_{p\in\Pc_n} \EE[\al_p(J_n)] = \sum_{p\in\Pc_n}\sum_{k=1}^\infty \PP(\al_p(J_n)\ge k) =  \sum_{p\in\Pc_n}\sum_{k=1}^\infty \PP(p^k\mbox{ divides } J_n).
\end{align*}
Hence, using \eqref{eq:probdivide}, \eqref{eq:Mertensinv} and the second inequality in \eqref{eq:Eomegaprev}, we have
\begin{align*}
|\EE[\Omega(J_n)] - \log\log(n)|&\le 0.262 + \frac{3.5}{\log n}  +  \sum_{p\in\Pc_n} \sum_{k\ge 2} \PP(p^k \mbox{ divides } J_n) \\
&\le 0.262 + \frac{3.5}{\log n}  + 2\zetab(2) \le 4.8,
\end{align*}
where the last inequality follows from the condition $n\geq 21$.
\noindent To show \eqref{eq:Eomegasqu}, we notice that by \eqref{eq:probdivide},
\begin{align*}
\E[\omega(J_{n})^2]
  &=\sum_{p,q\in\Pc_n}\Pb[p,q \text{ divide } J_{n}]
	=\sum_{\substack{p,q\in\Pc_n\\p\neq q}}\Pb[pq \text{ divides } J_{n}] + \sum_{p\in\Pc_n}\Pb[p \text{ divides } J_{n}].
\end{align*}
Therefore, by \eqref{eq:Mertensinv}, we conclude that
\begin{align}\label{eq:omegafbaux1}
\E[\omega(J_{n})^2]
  &\leq\sum_{\substack{p,q\in\Pc_n\\p\neq q}}\frac{1}{pq} + \sum_{p\in\Pc_n}\frac{1}{p}
	\leq \bigg(1+\sum_{p\in\Pc_n}\frac{1}{p}\bigg)\bigg(\sum_{p\in\Pc_n}\frac{1}{p}\bigg)\nonumber\\
	&\leq (\log\log(n)+2)(\log\log(n)+1)\leq \log\log(n)^2+3\log\log(n)+2\leq5.4\log\log(n)^2,
\end{align}
as required.\\

\noindent To show \eqref{eq:EomegasquHar}, we write
\begin{align*}
\E[\omega(H_{n})^2]
  &=\sum_{\substack{p,q\in\Pc_n\\p\neq q}}\Pb[pq \text{ divides } H_{n}] + \sum_{p\in\Pc_n}\Pb[p \text{ divides } H_{n}].
\end{align*}
We can easily show that for every $m\in\N$,
\begin{align}\label{eq:m|Hn}
\Pb[m \text{ divides } H_{n}]
  &=\frac{1}{L_n}\sum_{1\leq k\leq n}\frac{1}{k}\Indi{m \text{ divides } k}
	=\frac{1}{L_n}\sum_{1\leq j\leq n/m}\frac{1}{jm}\leq\frac{1}{m},
\end{align}
and thus, by \eqref{eq:omegafbaux1},
\begin{align*}
\E[\omega(H_{n})^2]
  &\leq\sum_{\substack{p,q\in\Pc_n\\p\neq q}}\frac{1}{pq} + \sum_{p\in\Pc_n}\frac{1}{p}
	\leq 5.4\log\log(n)^2,
\end{align*}
as required.
\end{proof}

\begin{Lemma}\label{Lem:L2bounds}
Let $\psi$ be a general additive function subject to  \textbf{(H1)} and \textbf{(H2)}. Then
\begin{align*}
\EE[\psi(H_n)^2] \le c_1^2  \log\log(n)^2 + 13.2 c_2^2. \\
\EE[\psi(J_n)^2] \le c_1^2  \log\log(n)^2 + 13.2 c_2^2. \\
\end{align*}
\end{Lemma}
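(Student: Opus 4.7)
The plan is to decompose $\psi$ into a ``squarefree'' part plus a ``higher-power'' correction, bound each in $L^2$, and combine. For $m\in\NN$ I will write $\psi(m)=\psi_\flat(m)+\psi_\sharp(m)$, where $\psi_\flat(m):=\sum_{p\mid m}\psi(p)$ and $\psi_\sharp(m):=\sum_{p:\,p^2\mid m}\bigl(\psi(p^{\al_p(m)})-\psi(p)\bigr)$, so that $|\psi_\flat(m)|\le c_1\omega(m)$ and $\psi_\sharp(m)$ sees only those primes appearing with multiplicity at least two. The cases $X=J_n$ and $X=H_n$ are parallel: I shall treat $J_n$ via \eqref{eq:probdivide}, and the $H_n$ estimate will follow by replacing \eqref{eq:probdivide} with \eqref{eq:m|Hn}, or equivalently by invoking the identity $\Lc(\psi(H_n))=\Lc(\sum_{p\in\Pc_n}\psi(p^{\xi_p})\mid\scr A_n)$ from Theorem \ref{thm:multiplicities} (which, since $\PP[\scr A_n]\ge 1/2$, upgrades any unconditional $L^2$ bound to a conditional one at the cost of a factor of two).

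For the leading piece, Lemma \ref{Lemma:variancesomega} immediately gives $\EE[\psi_\flat(J_n)^2]\le c_1^2\,\EE[\omega(J_n)^2]$, which is of the correct order $c_1^2(\log\log n)^2$. For the remainder I expand $\EE[\psi_\sharp(J_n)^2]$ into a diagonal and an off-diagonal sum. On the diagonal, \eqref{eq:probdivide} implies that the conditional law $\Lc(\al_p(J_n)\mid p^2\mid J_n)$ matches that of $\xi_p+2$ up to a harmless truncation term, which gives
\begin{align*}
\sum_{p\in\Pc_n}\EE\bigl[(\psi(p^{\al_p(J_n)})-\psi(p))^2\,\1(p^2\mid J_n)\bigr]\ \le\ \sum_{p\in\Pc_n}p^{-2}\,\EE[\psi(p^{\xi_p+2})^2]+\mathrm{lower\ order}\ =\ c_2^2+\mathrm{lower\ order},
\end{align*}
by hypothesis \textbf{(H2)}. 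The off-diagonal part is controlled by Cauchy--Schwarz and $\PP[p^2q^2\mid J_n]\le (pq)^{-2}$, producing another multiple of $c_2^2$. Hence $\EE[\psi_\sharp(J_n)^2]$ is dominated by an explicit constant times $c_2^2$, and the analogous estimate for $H_n$ follows from the same computation with \eqref{eq:m|Hn} in place of \eqref{eq:probdivide}.

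The hard part will be precise constant bookkeeping. A naive application of Minkowski's inequality to $\psi(J_n)=\psi_\flat(J_n)+\psi_\sharp(J_n)$ produces a cross term $2\|\psi_\flat(J_n)\|_{L^2}\|\psi_\sharp(J_n)\|_{L^2}$ of order $c_1c_2\log\log n$ that cannot be absorbed into the clean budget $c_1^2(\log\log n)^2+13.2\,c_2^2$ by AM--GM alone without inflating the $c_1^2$ coefficient above one. The cleanest route is therefore to compute $\EE[\psi(J_n)^2]=\sum_{p,q\in\Pc_n}\EE\bigl[\psi(p^{\al_p(J_n)})\psi(q^{\al_q(J_n)})\bigr]$ directly, split each factor along $\{\al_p=1\}\cup\{\al_p\ge 2\}$, and use Mertens' formula \eqref{eq:Mertensinv} together with $\PP[pq\mid J_n]\le (pq)^{-1}$ to extract $\bigl(\sum_{p\in\Pc_n}\psi(p)/p\bigr)^2\le c_1^2(\log\log n)^2$ as the leading term, with a manifestly non-negative remainder controlled purely by $c_2^2$ via \textbf{(H2)}.
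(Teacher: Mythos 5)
Your diagnosis of the obstruction is exactly right, but the escape route you propose does not clear it. The cross term $2\|\psi_\flat\|_{L^2}\|\psi_\sharp\|_{L^2}\asymp c_1c_2\log\log n$ that you identify cannot be absorbed into $c_1^2(\log\log n)^2+13.2c_2^2$ with a unit $c_1^2$-coefficient by AM--GM, and the ``cleanest route'' you then sketch has the very same problem. When you expand $\psi(p^{\al_p})=\psi(p)\1(\al_p\ge1)+(\psi(p^{\al_p})-\psi(p))\1(\al_p\ge2)$ inside $\sum_{p,q}\EE[\psi(p^{\al_p})\psi(q^{\al_q})]$ and decouple via $\PP[p^jq^k\mid J_n]\le p^{-j}q^{-k}$, the mixed off-diagonal piece
\begin{align*}
\sum_{p\ne q}\frac{|\psi(p)|}{p}\sum_{k\ge 2}\frac{|\psi(q^k)|}{q^k}
\ \le\ c_1\Big(\sum_{p\le n}p^{-1}\Big)\cdot O(c_2)\ \asymp\ c_1c_2\log\log n
\end{align*}
is \emph{not} ``controlled purely by $c_2^2$''; the sum decomposition and the product decomposition produce the same cross term. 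Moreover, your leading-term claim uses Mertens in the wrong direction: \eqref{eq:Mertensinv} gives $\sum_{p\le n}p^{-1}\ge\log\log n$, so for nonnegative $\psi$ (e.g.\ $\psi=\omega$) one only gets $\bigl(\sum_{p\le n}\psi(p)/p\bigr)^2\le c_1^2(\log\log n+0.262+2/\log n)^2$, which strictly exceeds $c_1^2(\log\log n)^2$. So the last paragraph asserts away the two obstacles it would need to clear; this is a genuine gap.

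It is fair to note that the paper's own proof of this lemma has the same defect: it reduces to $\EE[\psi(\cdot)^2]\le 4\bigl(0.5c_1\sum_{p\le n}p^{-1}+\sum_{p\le n}\Psi(p)p^{-2}\bigr)^2$ and then passes directly to $c_1^2(\log\log n)^2+8\zetab(2)c_2^2$, but expanding that square produces a cross term $4c_1\bigl(\sum_{p\le n}p^{-1}\bigr)\bigl(\sum_{p\le n}\Psi(p)p^{-2}\bigr)\asymp c_1c_2\log\log n$ that is nowhere accounted for. So your instinct that something is off is sound. For the downstream application (bounding $T_n$) the quantity actually needed is $\|\psi(X)\|_{L^2}$, not its square, and the clean, provable estimate is $\|\psi(X)\|_{L^2}\le c_1\sum_{p\le n}p^{-1}+O(c_2)$; working in $L^2$-norm and deferring the squaring avoids the spurious cross-term bookkeeping entirely.
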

\begin{proof}
We first write
\begin{align*}
\EE[\psi(H_n)^2] = \sum_{p\in\Pc_n} \EE[\psi(p^{\al_p(H_n)})^2] + \sum_{p\neq q\in\Pc_n} \EE[\psi(p^{\al_p(H_n)})\psi(p^{\al_p(H_n)})].
\end{align*}
Notice that by \eqref{eq:m|Hn} and $\PP(\xi_p=k)= (1-p^{-1})p^{-k}$, we have
\begin{align*}
\EE[\psi(p^{\al_p(H_n)})^2] &= \sum_{k=1}^\infty \PP(\al_p(H_n)=k) \psi(p^k)^2 \le\sum_{k=1}^\infty \PP(p^k \mbox{ divides } H_n)\psi(p^k)^2 \\
&\le 2\sum_{k=1}^\infty \PP(\xi_p = k)\psi(p^k)^2=2\EE[\psi(p^{\xi_p})^2].
\end{align*}
Similarly, we see that for $p\neq q$,
\begin{align*}
\EE[\psi(p^{\al_p(H_n)})\psi(p^{\al_p(H_n)})] &\le \sum_{k,\ell=1}^\infty \PP(\al_p(H_n)=k, \al_q(H_n)=\ell) |\psi(p^k)\psi(q^\ell)| \\
&\le \sum_{k,\ell=1}^\infty \PP( p^k q^\ell \mbox{ divides } H_n)  |\psi(p^k)\psi(q^\ell)| \\
&\le \Big(\sum_{k=1}^\infty \frac{\psi(p^k)}{p^k} \Big)   \Big(\sum_{k=1}^\infty \frac{\psi(q^k)}{q^k} \Big) \le 4 \EE[|\psi(p^{\xi_p})|]\EE[|\psi(q^{\xi_q})|].
\end{align*}
Therefore, we have
\begin{align*}
\EE[\psi(H_n)^2] \le 4 \Big(\sum_{p\in\Pc_n} \EE[|\psi(p^{\xi_p})|]\Big)^2.
\end{align*}
We notice that by using \eqref{eq:probdivide} in place of \eqref{eq:m|Hn} we also have
\begin{align*}
\EE[\psi(J_n)^2] \le 4 \Big(\sum_{p\in\Pc_n} \EE[|\psi(p^{\xi_p})|]\Big)^2.
\end{align*}
To bound each of the summands, we infer from the fact $\mathcal L(\xi_p|\xi_p\ge 2)=\mathcal L(2+\xi_p)$ that
\begin{align*}
\EE[|\psi(p^{\xi_p})|] &= (1-p^{-1})p^{-1}|\psi(p)| + p^{-2} \EE[|\psi(p^{2+\xi_p})|].
\end{align*}
One concludes that
\begin{align*}
\EE[\psi(H_n)^2] \vee \EE[\psi(H_n)^2] &\le 4 \Big( 0.5 c_1 \sum_{p\in\Pc_n} p^{-1} + \sum_{p\in\Pc_n} \frac{\Psi(p)}{p^2}\Big)^2 \\
&\le c_1^2  \log\log(n)^2 + 8\zetab(2) c_2^2,
\end{align*}
where we have used \eqref{eq:Mertensinv} for bounding the first series and Cauchy-Schwarz's inequality for the second. The proof is now complete.
\end{proof}

\begin{lemma}\label{l:reduction2}
\begin{align*}
\EE[\sum_{p\in\mathcal P} |\nphi(p) |\xi_p \mathbbm{1}(\xi_p\ge 2) ] < 3 \bar{\zeta}(2) c_1\leq 2c_1\\
\EE[\sum_{p\in\mathcal P_n}|\nphi(p^{\xi_p})| \mathbbm{1}(\xi_p\ge 2)]  \le \bar{\zeta}(2)^{1/2} c_2\leq c_2
\end{align*}
\end{lemma}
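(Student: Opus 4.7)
The plan is to treat both inequalities by systematically exploiting the memoryless property of the geometric distribution, namely that $\mathcal{L}(\xi_p \mid \xi_p \ge k) = \mathcal{L}(\xi_p + k)$, together with the bound $\sum_{p\in\Pc} p^{-2} \le \bar\zeta(2)$.

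For the first inequality, I would begin by noting that $\PP[\xi_p \ge 2] = p^{-2}$ and, by the memoryless property,
\begin{align*}
\EE[\xi_p \1(\xi_p\ge 2)] = p^{-2}\, \EE[\xi_p + 2] = p^{-2}\!\left(\frac{1}{p-1} + 2\right) = \frac{2p-1}{(p-1)p^2}.
\end{align*}
Since $(2p-1)/(p-1) \le 3$ for every $p\ge 2$, this yields $\EE[\xi_p \1(\xi_p\ge 2)] < 3 p^{-2}$. Summing over $p\in\Pc$ and bounding $|\psi(p)| \le c_1$ gives
\begin{align*}
\EE\!\Big[\sum_{p\in\Pc} |\psi(p)|\,\xi_p \1(\xi_p\ge 2)\Big] < 3 c_1 \sum_{p\in\Pc} p^{-2} \le 3\bar\zeta(2) c_1,
\end{align*}
which is the claimed bound; the numerical estimate $3\bar\zeta(2) = 3(\pi^2/6 - 1) \le 2$ finishes this part.

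For the second inequality, the same memoryless identity gives $\EE[|\psi(p^{\xi_p})|\1(\xi_p\ge 2)] = p^{-2}\EE[|\psi(p^{\xi_p + 2})|]$, so
\begin{align*}
\EE\!\Big[\sum_{p\in\Pc_n} |\psi(p^{\xi_p})|\1(\xi_p\ge 2)\Big] = \sum_{p\in\Pc_n} \frac{\EE[|\psi(p^{\xi_p+2})|]}{p^2}.
\end{align*}
I would then split the summand as $p^{-1}\cdot p^{-1}\EE[|\psi(p^{\xi_p+2})|]$ and apply Cauchy--Schwarz, together with Jensen's inequality $\EE[|\psi(p^{\xi_p+2})|]^2 \le \EE[\psi(p^{\xi_p+2})^2]$, to obtain
\begin{align*}
\sum_{p\in\Pc_n} \frac{\EE[|\psi(p^{\xi_p+2})|]}{p^2} \le \Big(\sum_{p\in\Pc_n} p^{-2}\Big)^{1/2} \Big(\sum_{p\in\Pc_n} \frac{\EE[\psi(p^{\xi_p+2})^2]}{p^2}\Big)^{1/2} \le \bar\zeta(2)^{1/2} c_2,
\end{align*}
using $\sum_p p^{-2} \le \bar\zeta(2)$ and the definition of $c_2$ from hypothesis \textbf{(H2)}. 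The final numerical inequality $\bar\zeta(2)^{1/2} \le 1$ is immediate.

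There is essentially no serious obstacle here: the argument is almost a direct unpacking of the definitions once the memoryless property is invoked. The only point requiring a bit of care is the Cauchy--Schwarz step in the second bound, where one must choose the weights $p^{-1}\cdot p^{-1}$ rather than, say, $1 \cdot p^{-2}$ in order to match the weight $p^{-2}$ appearing inside the definition of $c_2$.
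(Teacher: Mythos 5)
Your proof is correct and follows essentially the same route as the paper's: both inequalities are handled via the memoryless property $\mathcal{L}(\xi_p \mid \xi_p \ge 2) = \mathcal{L}(\xi_p + 2)$, with the elementary bound $2 + \tfrac{1}{p-1} \le 3$ for the first and a Cauchy--Schwarz split of the weight $p^{-2}$ as $p^{-1}\cdot p^{-1}$ (combined with $\EE[|X|]^2\le\EE[X^2]$) for the second, exactly as in the paper.
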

\begin{proof}
By the identity $\mathcal{L}(\xi_p\ |\ \xi_p\geq 2)=\mathcal{L}(\xi_p+2)$, we have
\begin{align*}
\EE[\sum_{p\in\mathcal P_n} |\nphi(p)| \xi_p \mathbbm{1}(\xi_p\ge 2) ]
&\le c_1 \sum_{p\in\mathcal P}  \PP[\xi_p\ge 2] \EE[2+\xi_p]\\
&=c_1 \sum_{p\in\mathcal P}  p^{-2}(2+p^{-1}(1-p^{-1})^{-1})
\le 3 \bar{\zeta}(2) c_1.
\end{align*}
by (H1). For the other term
\begin{align*}
\EE [\sum_{p\in\mathcal P_n}|\nphi(p^{\xi_p})| \mathbbm{1}(\xi_p\ge 2)]
&= \sum_{p\in\mathcal P_n} \EE[ |\nphi(p^{\xi_p+2})| ] \PP[\xi_p\ge 2]
=\sum_{p\in\mathcal P_n}  \frac{\EE[ |\nphi(p^{\xi_p+2}) |]}{p^2} \\
&\le \Big(\sum_{p\in\mathcal P} \frac 1 {p^2} \Big)^{1/2}
\Big( \sum_{p\in\mathcal P} \frac{\EE[| \nphi(p^{\xi_p+2}) |]^2}{p^2} \Big)^{1/2}
\le \bar{\zeta}(2)^{1/2} c_2
\end{align*}
by (H2).
\end{proof}

\begin{Lemma}\label{l:HnTail}
For any $\theta\in [n]$ and $n\ge 21$, we have
\begin{align}\label{ineq:Hntailestimate}
\PP[H_n > n\theta^{-1}]
  &\le \frac{2\log \theta}{L_n}.
\end{align}
\end{Lemma}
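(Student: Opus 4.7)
My plan is to compute the tail probability directly from the definition of $H_n$ and bound the resulting partial harmonic sum by an integral comparison. By the definition of $H_n$, one has
$$\PP[H_n > n/\theta] = \frac{1}{L_n}\sum_{k = \lfloor n/\theta\rfloor + 1}^{n} \frac{1}{k}.$$
For $\theta=1$ the sum on the right is empty, so the claim is trivial, and I may henceforth assume $\theta \ge 2$. I will abbreviate $m := \lfloor n/\theta\rfloor$, which is at least $1$ whenever $\theta\le n$.

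The pointwise estimate $1/k \le \int_{k-1}^{k} x^{-1}\,dx$ yields
$$\sum_{k=m+1}^{n} \frac{1}{k} \;\le\; \int_m^n \frac{dx}{x} \;=\; \log(n/m),$$
so the entire proof reduces to establishing $\log(n/m) \le 2\log\theta$. I will split according to the size of $m$. If $m\ge 2$, equivalently $\theta \le n/2$, the elementary inequality $\lfloor x\rfloor \ge x/2$ for $x\ge 2$ gives $m \ge n/(2\theta)$, and hence $\log(n/m) \le \log(2\theta) \le 2\log\theta$, the last step using $\theta\ge 2$. If $m=1$, equivalently $n/2 < \theta \le n$, the required inequality becomes $\log n \le 2\log\theta$; this follows from $\theta > n/2 \ge \sqrt{n}$, a numerical inequality that is valid for $n\ge 21$ (indeed already for $n\ge 4$).

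There is no real obstacle, the argument being essentially a tight integral comparison. The only mildly delicate point is the boundary subcase $m=1$, which is precisely where the hypothesis $n\ge 21$ is used; the rest of the hypothesis plays no role beyond ensuring that $H_n$ and $L_n$ are meaningful.
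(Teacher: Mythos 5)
Your proof is correct. It rests on the same integral comparison for the partial harmonic sum that the paper uses, but the boundary analysis is organized differently: the paper dispenses with the entire range $n\le 2\theta$ by checking that there the claimed bound $2\log\theta/L_n$ already exceeds $1$ (this is the only place the hypothesis $n\ge 21$ enters their argument), and then for $n>2\theta$ it estimates $\log\bigl(n/\lfloor n/\theta\rfloor\bigr)$ via $\lfloor x\rfloor \ge x-1$. You instead prove the inequality non-vacuously everywhere, splitting on $\lfloor n/\theta\rfloor\ge 2$ (where $\lfloor x\rfloor\ge x/2$ does the job) versus $\lfloor n/\theta\rfloor = 1$ (where $\theta>n/2\ge\sqrt n$ yields $\log n\le 2\log\theta$). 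Both routes are sound; yours is slightly more self-contained since it never invokes vacuity, and it also exposes that the hypothesis $n\ge 21$ is far stronger than needed, $n\ge 4$ sufficing. One small stylistic note: you explicitly isolate $\theta=1$, which the paper does not, though that case is indeed handled correctly by the empty-sum observation.
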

\begin{proof}
The inequality is trivial when $n\leq 2\theta$, as in such instance the right hand side is bounded from below by
$\frac{2\log (n/2)}{\log(n)+1}\geq1$, due to the condition $n\geq 21$. Thus, we can assume without loss of generality that $n>2\theta$. Notice that for all $k\geq 1$,
\begin{align*}
\sum_{i=k}^n \frac{1}{i} \le \int_{k-1}^{n} \frac{1}{x} dx = \log(n) - \log(k-1),
\end{align*}
yielding
\begin{align*}
\PP[H_n> n\theta^{-1}]
  &= \sum_{k=\floor{n\theta^{-1}}+1}^n \frac{1}{kL_n} \le \frac{1}{L_n} (\log(n) - \log(\floor{n\theta^{-1}}) ) \\
  &\le \frac{1}{L_n} \log\bigg(\frac{n}{n\theta^{-1}-1}\bigg) \le \frac{1}{L_n} \log(2\theta) \le \frac{2\log \theta}{L_n},
\end{align*}
where the one but last inequality follows from the fact that $n> 2\theta$.
\end{proof}

\begin{Lemma}\label{eq:Lemageometricaux}
For every $p\geq 2$, we have that
\begin{align}\label{eq:Lemageometricauxeq}
\sum_{k=1}^{\infty}kp^{-k}
  &=p^{-1}(1-p^{-1})^{-2}\leq \frac{2}{p}\\
\sum_{k=1}^{\infty}k^2p^{-k}
  &=p^{-1}(1-p^{-1})^{-3}(1+p^{-1})\leq \frac{12}{p}\label{eq:Lemageometricauxeq2}\\
\sum_{k=1}^{\infty}k^3p^{-k}
  &=p^{-3}(1+4p+p^2)(1-p^{-1})^{-4}
	\leq \frac{53}{p}.\label{eq:Lemageometricauxeq3}
\end{align}
\end{Lemma}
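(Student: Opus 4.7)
The plan is to derive the three closed-form identities by repeated differentiation of the geometric series and then verify the numerical upper bounds by reducing each to a bounded-range question in $p$.

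First, starting from $\sum_{k\ge 0} x^k = (1-x)^{-1}$ for $|x|<1$, I would differentiate termwise and then multiply by $x$ to obtain $\sum_{k\ge 1} k x^k = x(1-x)^{-2}$. Applying the same operation (differentiate, multiply by $x$) once more yields $\sum_{k\ge 1} k^2 x^k = x(1+x)(1-x)^{-3}$, and a third iteration yields $\sum_{k\ge 1} k^3 x^k = x(1+4x+x^2)(1-x)^{-4}$. Specializing to $x=p^{-1}$ reproduces the three equalities in \eqref{eq:Lemageometricauxeq}--\eqref{eq:Lemageometricauxeq3}.

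For the upper bounds, multiplying each identity through by $p$ recasts the inequality as the assertion that a specific rational function of $p$ of the form $p^{2}P(p)/(p-1)^{m}$, with $P$ a fixed low-degree polynomial, is bounded above by a universal constant for $p\ge 2$. In each of these cases the rational function can be seen to be monotone decreasing on $[2,\infty)$, either by direct sign analysis of its derivative after clearing denominators, or by noting that it can be rewritten as $1+O(p^{-1})$ with a positive leading correction, so that its supremum on $[2,\infty)$ is attained at the endpoint $p=2$. A direct substitution at $p=2$ then matches the quoted constants: the $k^2$-sum equals $6=12/p|_{p=2}$, and the $k^3$-sum equals $26\le 26.5=53/p|_{p=2}$, with the same monotonicity argument giving the corresponding bound on the linear sum.

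The lemma is entirely computational and presents no serious obstacle; the only point of care is the recursive derivation of the $k^3$-identity, which can alternatively be verified by writing $k^3 = k(k-1)(k-2) + 3k(k-1) + k$ and invoking the standard formula $\sum_{k\ge j} k(k-1)\cdots(k-j+1) x^k = j!\, x^{j}(1-x)^{-j-1}$ for the falling-factorial sums.
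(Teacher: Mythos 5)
Your derivation of the three closed-form identities by iterating the operator $x\,\frac{d}{dx}$ on $\sum_{k\ge0}x^k=(1-x)^{-1}$ and then setting $x=p^{-1}$ is correct and amounts to the same computation as the paper's (the paper differentiates the moment generating function of a geometric random variable, which is algebraically identical). Your alternative via the falling-factorial decomposition $k^3=k(k-1)(k-2)+3k(k-1)+k$ is also fine.

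There is, however, a concrete gap in the verification of the inequalities. You check the endpoint $p=2$ explicitly for the $k^2$ and $k^3$ sums (getting $6\le 6$ and $26\le 26.5$, both correct), but for the linear sum you merely assert that ``the same monotonicity argument'' gives the bound, without computing the value. Had you done so, you would have found
\[
\sum_{k\ge1}k\,2^{-k}=\tfrac{1/2}{(1/2)^2}=2,\qquad\text{whereas}\qquad \left.\tfrac{2}{p}\right|_{p=2}=1,
\]
so the inequality $\sum_{k\ge1}kp^{-k}\le 2/p$ \emph{fails} at $p=2$. The monotonicity claim is right — $(1-p^{-1})^{-2}$ is decreasing on $[2,\infty)$ — but its supremum is $4$, attained at $p=2$, so the correct bound is $4/p$, not $2/p$. (This is in fact a typo in the lemma statement itself: the paper's subsequent use of it, namely the step $12\sum_{p}\sum_{k}\sigma_n^{-1}|\nphi(p)|\frac{\log p}{\log n}\frac{k}{p^k}\le \frac{48c_1}{\sigma_n}\sum_p\frac{\log p}{p\log n}$, implicitly invokes $\sum_k k/p^k\le 4/p$.) A proof that claims to verify the stated constants must actually carry out the $p=2$ computation for all three sums; doing so for the first one exposes the error.
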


\begin{proof}
The result easily follows from the fact that if $G$ has geometric distribution with
$\Pb[G=k]=\theta(1-\theta)^{k-1}$, then its moment generating function is given by
\begin{align*}
\E[e^{\lambda G}]
  &=\frac{\theta}{1-(1-\theta)e^{\lambda}}.
\end{align*}
The result is thus obtained by multiplying the both sides of \eqref{eq:Lemageometricauxeq}-\eqref{eq:Lemageometricauxeq3} by $(1-p^{-1})$,
taking the first two derivatives in $\E[e^{\lambda G}]$ and evaluating at $\lambda=0$ and $\theta=1-p^{-1}$.
\end{proof}

\begin{Lemma}\label{l:GaussianDistance}
Let $N$ be a standard normal random variable and $W$ be normal with mean $\mu$ and variance $\sigma^2$. Then
\begin{align*}
\dk(W,N)\le |\sigma^2-1 |  +  \frac{\sqrt{2\pi}}{4}\mu, \\
\dw(W,N) \le \frac{\sqrt{2}}{\pi}|\sigma^2-1 | +  2\mu.
\end{align*}
\end{Lemma}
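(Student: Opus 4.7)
The plan is to prove both bounds using Stein's method applied to the \emph{target} random variable $W \sim N(\mu,\sigma^2)$. The key point is that although the Stein solutions $f_z$ and $f_h$ in Lemmas \ref{lem:steinsol} and \ref{lem:steinsol_W} are constructed with the standard Gaussian $N$ as reference, we may take expectations of $\St[f](W) = f'(W) - W f(W)$ under $W$ and compare with the Stein identity satisfied by $W$ itself; the discrepancy between the two identities will produce precisely a mean-term (proportional to $\mu$) and a variance-term (proportional to $\sigma^2-1$). Since these are both normal distributions, everything is elementary once the right identity is in place, so I expect no genuine obstacle; the delicate part is only chasing the numerical constants.

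For the Kolmogorov bound, I would fix $z\in\RR$, let $f_z$ be the Stein solution to $f_z'(x) - x f_z(x) = \1_{\{x\le z\}} - \Phi(z)$, and apply it at $W$ to obtain $P(W\le z) - \Phi(z) = \EE[f_z'(W) - W f_z(W)]$. Now the Gaussian integration-by-parts identity for $W$ reads $\EE[f_z'(W)] = \sigma^{-2}\EE[(W-\mu)f_z(W)]$, which rewrites the right-hand side as
\begin{align*}
P(W\le z) - \Phi(z) = -\frac{\mu}{\sigma^2}\,\EE[f_z(W)] + \Big(\frac{1}{\sigma^2}-1\Big)\EE[W f_z(W)].
\end{align*}
Combining the uniform bounds $\|f_z\|_\infty \le \sqrt{2\pi}/4$ and $|x f_z(x)|\le 1$ from Lemma \ref{lem:steinsol} then yields $\dk(W,N) \le \sigma^{-2}|1-\sigma^2| + \sigma^{-2}(\sqrt{2\pi}/4)|\mu|$, which is at most $|\sigma^2-1| + (\sqrt{2\pi}/4)|\mu|$ when $\sigma^2\ge 1$ (the regime in which the lemma is used after Theorem \ref{thm:mainunif}).

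For the Wasserstein bound, the cleanest route is a direct optimal coupling. Writing $W \stackrel{d}{=} \mu + \sigma N$ with the same standard normal $N$, the triangle inequality and translation/scale invariance of $\dw$ give
\begin{align*}
\dw(W,N) \le \dw(\mu+\sigma N,\sigma N) + \dw(\sigma N,N) \le |\mu| + |\sigma-1|\,\EE|N| = |\mu| + |\sigma - 1|\sqrt{2/\pi}.
\end{align*}
Using the elementary factorization $|\sigma-1| = |\sigma^2-1|/(\sigma+1)$ and the bound $\sigma + 1 \ge 1$ (automatic for $\sigma>0$) delivers a clean estimate of the stated shape $\dw(W,N) \le C_1|\sigma^2-1| + C_2|\mu|$. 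Alternatively, one can run Stein's method with a Lipschitz-$1$ test function: applying the normal Stein identity $\EE[W f_h(W)] = \mu\,\EE[f_h(W)] + \sigma^2 \EE[f_h'(W)]$ and the uniform bounds $\|f_h\|_\infty\le 2$, $\|f_h'\|_\infty\le \sqrt{2/\pi}$ from Lemma \ref{lem:steinsol_W} gives $\dw(W,N)\le \sqrt{2/\pi}\,|1-\sigma^2| + 2|\mu|$, which is of the same form. The only remaining bookkeeping is to check that one of these two routes achieves the specific constant $\sqrt{2}/\pi$ displayed in the statement; this is purely a matter of sharpening the constant and involves no new idea.
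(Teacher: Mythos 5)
Your overall strategy --- apply Stein's operator at $W$ and use Gaussian integration by parts for $W$ --- is exactly the paper's. The difference is a small but consequential choice of which term to substitute. The paper rewrites $\EE[Wf(W)]$ as $\sigma^2\EE[f'(W)]+\mu\EE[f(W)]$, so that
\[
\EE[f'(W)-Wf(W)] = (1-\sigma^2)\EE[f'(W)]-\mu\EE[f(W)],
\]
and the stated Kolmogorov bound follows from $\|f_z'\|_\infty\le 1$, $\|f_z\|_\infty\le\sqrt{2\pi}/4$ with no hypothesis on $\sigma$. You instead rewrote $\EE[f_z'(W)]$ in terms of $\EE[(W-\mu)f_z(W)]$, which puts a spurious factor $\sigma^{-2}$ in front of both error terms. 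The resulting bound $\sigma^{-2}|\sigma^2-1| + \sigma^{-2}\tfrac{\sqrt{2\pi}}{4}|\mu|$ recovers the stated one only when $\sigma^2\ge 1$; for $\sigma^2<1$ it is strictly larger, so your argument does not prove the lemma as stated (and Remark~\ref{eq:Remone} shows the lemma may be invoked with variance on either side of $1$). The fix is immediate --- substitute in the other direction, exactly as you did correctly in your Route~2 for the Wasserstein bound --- but as written this is a genuine gap.

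Your Wasserstein Route~2 is precisely the paper's proof. Route~1 (the coupling $W\stackrel{d}{=}\mu+\sigma N$ plus translation and scale invariance of $\dw$) is a valid, more elementary alternative and even yields the sharper coefficient $1$ on $|\mu|$ in place of $2$. On the constant in front of $|\sigma^2-1|$: both of your routes and the paper's own proof produce $\sqrt{2/\pi}$, not $\sqrt{2}/\pi$; the $\sqrt{2}/\pi$ displayed in the lemma statement appears to be a typographical error, so the ``bookkeeping'' you flagged at the end does not need resolving --- there is no route that reaches $\sqrt{2}/\pi$, and the paper's does not either.
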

\begin{proof}
By integration by parts, one sees that
\begin{align*}
\sigma^2\EE[f'(W)] = \EE[(W-\mu)f(W)]
\end{align*}
for all $f:\RR\to\RR$ with $\|f\|_\infty + \|f'\|_\infty<\infty$, yielding
\begin{align*}
|\EE[Wf(W)] -\EE[f'(W)]|\le \|f'\|_\infty   |\sigma^2-1| + \|f\|_\infty \mu.
\end{align*}
The result follows from Lemmas \ref{lem:steinsol} and \ref{lem:steinsol_W}.

\end{proof}

\noindent{\bf Acknowledgments}. This research is supported by FNR Grant R-AGR-3410-12-Z (MISSILe) from the University of Luxembourg and partially supported by Grant R-146-000-230-114 from the National University of Singapore.

\bibliographystyle{plain}

\end{document}